\let\headruleORIG\headrule
\renewcommand{\headrule}{\color{black} \headruleORIG}
\numberwithin{equation}{section}
\title{Static spherically symmetric Einstein-Vlasov bifurcations of the Schwarzschild spacetime}
\author{Fatima Ezzahra Jabiri \thanks{Sorbonne Universit\'{e}, CNRS, Universit\'{e} de Paris, Laboratoire Jacques-Louis Lions (LJLL), F-75005 Paris, France, jabiri@ljll.math.upmc.fr} }
\theoremstyle{plain}
\newtheorem{remark}{Remark}
\theoremstyle{plain}
\newtheorem{theoreme}{Theorem}
\theoremstyle{definition}
\newtheorem{definition}{Definition}
\theoremstyle{plain}
\newtheorem{lemma}{Lemma}
\theoremstyle{plain}
\newtheorem{Propo}{Proposition}
\theoremstyle{plain}
\theoremstyle{plain}
\theoremstyle{definition}
\newtheorem*{thank}{Acknowledgements}
\newcommand{\Ric}[2]{\tensor{Ric}{_{#1#2}}}
\newcommand{\g}[2]{\tensor{g}{_{#1#2}}}
\newcommand{\ginv}[2]{\tensor{g}{^{#1#2}}}
\newcommand{\T}[2]{\tensor{{T}}{_{#1#2}}}
\newcommand{\Ein}[2]{\tensor{G}{_{#1#2}}}
\newcommand{\Chris}[3]{\tensor{\Gamma}{^{#1}_{#2#3}}}
\newcommand{\spacetime}{\mathcal{M}}
\newcommand{\Esch}{E^{Sch}_\ell}
\newcommand{\ELrange}{\left]\sqrt{\frac{8}{9}}, \infty\right[\times \left]12M^2, \infty\right[}
\newcommand{\Abound}{\mathcal A_{bound}}
\newcommand{\Chi}{\mbox{\Large$\chi$}}
\newcommand{\Ibarre}{\overline{I}}
\newcommand{\tmu}{\tilde\mu}
\begin{document}

\pagestyle{fancy}

\maketitle

\begin{abstract}
We construct a one-parameter family of static and spherically symmetric solutions to the Einstein-Vlasov system bifurcating from the Schwarzschild spacetime. The constructed solutions have the property that the spatial support of the matter  is a finite, spherically symmetric shell located away from the black hole. Our proof is mostly based on the analysis of the set of trapped timelike geodesics and of the effective potential energy for static spacetimes close to Schwarzschild. This provides an alternative approach to the construction of static solutions to the Einstein-Vlasov system in a neighbourhood of black hole vacuum spacetimes. 

\end{abstract}

\setcounter{secnumdepth}{4}
\setcounter{tocdepth}{4}

%régler l'espacement entre les lignes
\newcommand{\HRule}{\rule{\linewidth}{0.5mm}}
%espacement entre les lignes d'un tableau
\renewcommand{\arraystretch}{1.5}

\tableofcontents

\section{Introduction}

The relativistic kinetic theory of gases plays an important role in the description of matter fields surrounding black holes such as plasmas or distribution of stars \cite{peebles1972star}, \cite{bahcall1976star}. In particular, Vlasov matter is used to analyse galaxies or globular galaxies where the stars play the role of gas particles and where collisions between these are sufficiently rare to be neglected, so that the only interaction taken into account is gravitation. 
\\
\\ In the geometric context of general relativity, the formulation of relativistic kinetic theory  was developed by Synge  \cite{synge1934energy}, Israel \cite{israel1963relativistic} and Tauber and Weinberg \cite{tauber1961internal}. We also refer to  the introduction of \cite{andersson2019variational} for more details. In this work, we are interested in the Vlasov matter model. It is described by a particle distribution function on the phase space which is transported along causal geodesics, resulting in the Vlasov equation. The latter is coupled to the equations for the gravitational field, where the source terms are computed from the distribution function. In the non-relativistic setting, i.e.~the Newtonian framework, the resulting nonlinear system of partial diffrential equations is the Vlasov-Poisson (VP) system \cite{rein1992newtonian}, while its general relativistic counterpart forms the Einstein-Vlasov (EV) system. Collisionless matter possesses several attractive features  from a partial differential equations viewpoint. On any fixed background, it avoids pathologies such as shock formation, contrary to more traditional fluid models. Moreover, one has global solutions of the VP system in three dimensions for general initial data \cite{pfaffelmoser1992global}, \cite{lions1991propagation}. 
\\
\\ The local well-posedness of the Cauchy problem for the EV system was first investigated in \cite{choquet1971probleme} by Choquet-Bruhat. Concerning the nonlinear stability of the Minkowski spacetime as the trivial solution of the EV system, it was proven in the case of spherically symmetric initial data by Rendall and Rein \cite{rein1992global} in the massive case and by Dafermos \cite{dafermos2006note} for the massless case.  The general case was recently shown  by Fajman, Joudioux and Smulevici \cite{fajman2017stability} and independently by  Lindblad-Taylor \cite{lindblad2017global} for the massive case, and by Taylor \cite{taylor2017global} for the massless case. Nonlinear stability results have been given by Fajman \cite{fajman2017nonvacuum}  and Ringtröm \cite{ringstrom2013topology} in the case of cosmological spacetimes. See also \cite{smulevici2011area}, \cite{andreasson2005existence}, \cite{dafermos2006strong}, \cite{weaver2004area}, \cite{smulevici2008strong} for several results on cosmological spacetimes with symmetries.  
\\
\\In stellar dynamics, equilibrium states can be described by stationary solutions of the VP or EV system. Static and spherically symmetric solutions can be obtained by assuming that the distribution function has the following form 
\begin{equation*}
f(x,v) = \Phi(E, \ell),
\end{equation*}
where $E$ and $\ell$ are interpreted as the energy and the total angular momentum of particles respectively. In fact, in the Newtonian setting, the distribution function associated to a stationary and spherically symmetric solution to the VP system is necessarily described by a function depending only on $E$ and $\ell$.  Such statement is referred to as Jean's theorem \cite{jeans2017problems}, \cite{jeans1915theory}, \cite{batt1986stationary}. However, it has been shown that its generalisation to general relativity is false in general \cite{schaeffer1999class}.  
\\ A particular choice of $\Phi$, called the polytropic ansatz, which is commonly used to construct static and spherically symmetric states for both VP and EV system is given by 
\begin{equation} 
\label{polytropes}
\Phi(E, \ell):=
\left\{ 
\begin{aligned}
(E_0 - E)^\mu \ell^k, \quad &E<E_0, \\
0 ,\quad &E\geq E_0. 
\end{aligned}
\right. 
\end{equation}
where $E_0>0$, $\mu>-1$ and $k>-1$.
In \cite{rein1993smooth}, Rein and Rendall gave the first class of asymptotically flat, static, spherically symmetric solutions to EV system with finite mass and finite support such that $\Phi$ depends only on the energy of particles. In \cite{rein1994static}, Rein extended the above result for distribution functions depending also on $\ell$, where $\Phi$ is similar to the polytropic ansatz \eqref{polytropes}: $\displaystyle \Phi(E, \ell) = \phi(E)(\ell -\ell_0)_+^k$, $\phi(E) = 0$ if $E>E_0$, $k>-1$ and $\ell_0\geq 0$. Among these, there are singularity-free solutions with a regular center, and also solutions with a Schwarzschild-like black hole.  Other results beyond spherical symmetry were recently established. In \cite{andreasson2011existence},  static and axisymmetric solutions to EV system were constructed by Rein-Andréasson-Kunze and then extended in \cite{andreasson2014rotating} to establish the existence of rotating stationary and axisymmetric solutions to the Einstein-Vlasov system. The  constructed solutions are obtained as bifurcations of a spherically symmetric Newtonian steady state. We note that the proof is based on the application of the implicit function theorem and it can be traced back to Lichtenstein \cite{lichtenstein2013gleichgewichtsfiguren} who proved the existence of non-relativistic, stationary, axisymmetric self-gravitating fluid balls.  Similar arguments were adapted by Andréasson-Fajman-Thaller in \cite{andreasson2015static}  to prove the existence of static spherically symmetric solutions of the Einstein-Vlasov-Maxwell system with non-vanishing  cosmological constant. Recently, Thaller has constructed in \cite{thaller2019existence} static and axially symmetric shells of the Einstein-Vlasov-Maxwell system.
\\
 \\In this paper, we construct static spherically symmetric solutions to the EV system which contain a matter shell located in the exterior region of the Schwarzschild black hole. This provides an alternative construction to that of  Rein \cite{rein1994static}, See remarks \ref{rem1} and \ref{rem2} below for a comparison of our works and results. Our proof is based on the study of  trapped timelike geodesics of spacetimes closed to Schwarzschild. In particular, we show (and exploit) that for some values of energy and total angular momentum $(E, \ell)$, the effective potential associated to a particle moving in a perturbed Schwarzschild spacetime and that of a particle with same $(E, \ell)$ moving in  Schwarzschild are similar. Our distribution function will then be supported on the set of trapped timelike geodesics, and this will lead to the finiteness of the mass and the matter shell's radii. 
 \\
 \\Recall that the effective potential energy $E_\ell^{Sch}$ of a particle of rest mass $m=1$ and angular momentum $\ell$, moving in the exterior of region of the Schwarzschild spacetime is given by 
\begin{figure}[h!]
\includegraphics[width=\linewidth]{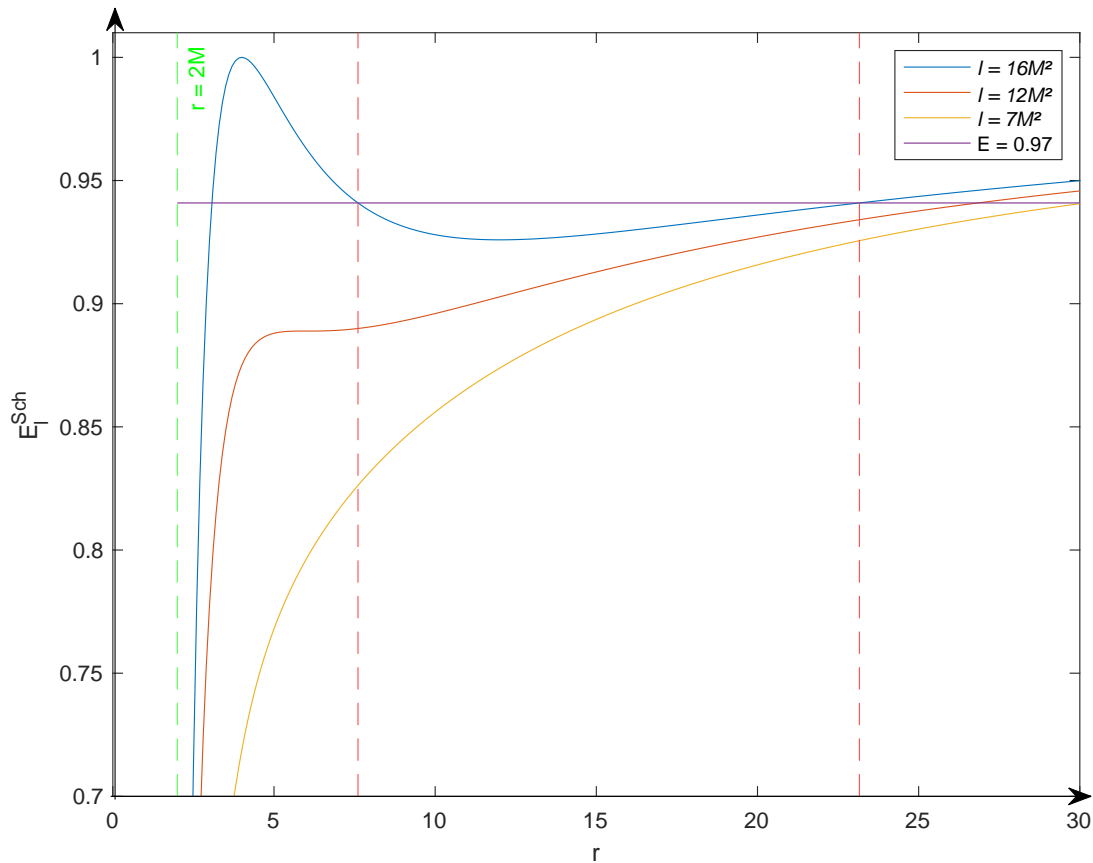}
\label{Fig::2}
\caption{Shape of the effective potential energy $E_\ell^{Sch}$ for three cases of $\ell$, with $M=1$, $E = 0.97$. }
\end{figure}
 \begin{equation}
 E_\ell^{Sch}(r) = \left(1 - \frac{2M}{r} \right)\left(1 + \frac{\ell}{r^2} \right). 
 \end{equation}
 In particular, trapped geodesics occur when the equation 
 \begin{equation}
 \label{motion-}
 E_\ell^{Sch}(r) = E^2,
 \end{equation}
 admits three distinct roots $r_0^{Sch}(E, \ell) < r_1^{Sch}(E, \ell) < r_2^{Sch}(E, \ell)$.  
 \\We state now our main result:   
\begin{theoreme}
\label{thm::1}
There exists a $1-$parameter family of smooth, static, spherically symmetric asymptotically flat spacetimes $(\spacetime, g_\delta)$ and distribution functions $f^\delta: \Gamma_1\to \mathbb R_+$ solving the Einstein-Vlasov system given by equations \eqref{EFE}, \eqref{Vlasov2} and \eqref{EM_tensor}, such that $f^\delta$ verifies 
\begin{equation}
\forall(x, v)\in\Gamma_1,\; f^\delta(x,v) = \Phi(E^\delta, \ell; \delta)\Psi(r, (E^\delta, \ell), g_\delta). 
\end{equation}
where $\Phi(\cdot, \cdot;\delta)$ is supported on a compact set $B_{bound}$ of the set of parameters $(E, \ell)$ corresponding to trapped timelike trajectories, $\Psi$ is a positive cut-off function, defined below (cf. \eqref{cut::off}) which selects the trapped geodesics with parameters $(E, \ell)\in B_{bound}$, $\Gamma_1$ is the mass shell of particles with rest mass $m=1$, and $E^\delta$ is the local energy with respect to the metric $g_\delta$. 
\\Moreover, the resulted spacetimes contain a shell of Vlasov matter in the following sense: there exist $\displaystyle R_{min} < R_{max}$ such that the metric is given by Schwarzschild metric of mass $M$ in the region $]2M, R_{min}]$, by Schwarzschild metric of mass $M^\delta$ in the region  $[R_{max}, \infty[ $, and  $f^\delta$ does not identically vanish in the region $[R_{min}, R_{max}]$.  
\end{theoreme}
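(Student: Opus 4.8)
The plan is to reduce the Einstein--Vlasov system, under the static and spherically symmetric ansatz, to a coupled pair of ordinary differential equations for the two metric coefficients sourced by momentum-space integrals of the prescribed distribution function, and then to solve the reduced system by a perturbative fixed-point argument anchored at Schwarzschild; the technical heart, as anticipated above, is a robustness analysis of the trapped-geodesic structure under metric perturbations.

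First I would fix coordinates so that $g_\delta = -e^{2\mu(r)}\,dt^2 + \bigl(1-\tfrac{2m(r)}{r}\bigr)^{-1}dr^2 + r^2\,d\Omega^2$. For a future-directed unit-mass timelike geodesic the quantities $E = e^{2\mu(r)}\dot t$ and $\ell = r^4(\dot\phi)^2$ (evaluated in the equatorial plane) are conserved, and the radial motion obeys $e^{2\mu}\bigl(1-\tfrac{2m}{r}\bigr)^{-1}\dot r^2 = E^2 - E_\ell^{g}(r)$ with effective potential $E_\ell^{g}(r) = e^{2\mu(r)}\bigl(1+\tfrac{\ell}{r^2}\bigr)$, which collapses to $E_\ell^{Sch}$ when $e^{2\mu} = 1 - 2M/r$. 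Since $f^\delta = \Phi(E,\ell;\delta)\,\Psi(r,(E,\ell),g_\delta)$ with $\Phi$ a function of the conserved quantities and $\Psi$ (cf.\ \eqref{cut::off}) equal to $1$ on the trapped branch $[r_1(E,\ell,g_\delta), r_2(E,\ell,g_\delta)]$ of the classically allowed region and to $0$ on the plunging branch $]2M, r_0(E,\ell,g_\delta)]$, while the $r$-interval on which $\Psi$ transitions between these values is classically forbidden for the corresponding $(E,\ell)$, the function $f^\delta$ is constant along the geodesic flow and hence automatically solves the Vlasov equation. The full system then reduces to
\begin{align*}
m'(r) &= 4\pi r^2\,\rho[g](r),\\
\mu'(r) &= \frac{m(r) + 4\pi r^3\,p[g](r)}{r\,(r - 2m(r))},
\end{align*}
where $\rho[g]$ and $p[g]$ are the energy density and radial pressure of $f^\delta$, given by the standard integrals over the fibres of $\Gamma_1$, supplemented by $m(R_{min}) = M$ and the asymptotic-flatness normalisation $\mu(\infty) = 0$ which encode the matching to Schwarzschild on either side.

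Next I would set up the fixed-point scheme. Fix a non-empty compact set $B_{bound}$ of parameters $(E,\ell)$ lying strictly inside the Schwarzschild trapped region, that is, where $E_\ell^{Sch}(r) = E^2$ has three transverse roots $r_0^{Sch} < r_1^{Sch} < r_2^{Sch}$ with a non-degenerate barrier maximum in $(r_0^{Sch}, r_1^{Sch})$ and a non-degenerate well minimum in $(r_1^{Sch}, r_2^{Sch})$; pick $\Phi(\cdot\,;\delta)$ smooth, supported in $B_{bound}$, positive on its interior, with $\Phi(\cdot\,;0)\equiv 0$ and amplitude $O(\delta)$ so that $\delta = 0$ returns pure Schwarzschild. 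To a pair $(m,\mu)$ that is $C^2$-close on the matter region to the Schwarzschild coefficients one associates in turn: the perturbed potential $E_\ell^{g}$; the turning points $r_0, r_1, r_2$ and hence the trapped branch, extracted by the implicit function theorem; the distribution function $f^\delta$ and the sources $\rho[g], p[g]$; and finally, by integrating the two reduced equations with the boundary data above, a new pair $(\tilde m, \tilde\mu)$. I would then prove that this map preserves a small $C^2$-ball around Schwarzschild and is a contraction there (or invoke Schauder), the smallness being quantitative in $\delta$ because $\rho[g], p[g] = O(\delta)$; any fixed point is the sought $(g_\delta, f^\delta)$. With $R_{min} := \inf_{B_{bound}} r_1$ and $R_{max} := \sup_{B_{bound}} r_2$ the sources vanish on $]2M, R_{min}]$ and on $[R_{max},\infty[$, so there the reduced equations integrate to vacuum and the metric is exactly Schwarzschild, of mass $M$ inside and of mass $M^\delta := m(R_{max}) = M + \int_{R_{min}}^{R_{max}} 4\pi r^2 \rho\,dr$ outside; positivity of $\Phi$ on $B_{bound}$ for $\delta > 0$ forces $R_{min} < R_{max}$ and the non-vanishing of $f^\delta$ on $[R_{min}, R_{max}]$, $C^2$-closeness keeps $r - 2m(r) > 0$ throughout so that no horizon forms in the shell, and the smoothness of $\Phi$ together with the $C^\infty$ dependence of the turning points on $(E,\ell)$ yields smoothness of $\rho, p$, hence of $g_\delta$, with asymptotic flatness supplied by the outer Schwarzschild piece.

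The main obstacle is the robustness and self-consistency of this trapped-geodesic picture: one must show that every static metric sufficiently $C^2$-close to Schwarzschild has, uniformly over $(E,\ell)\in B_{bound}$, a perturbed potential $E_\ell^{g}$ with three transverse roots and non-degenerate barrier maximum and well minimum, with $r_0, r_1, r_2$ depending smoothly on $(E,\ell)$ and Lipschitz-continuously on $g$; and that the construction closes, in the sense that the geodesics the sourced metric $g_\delta$ actually traps are exactly those on which $f^\delta$ is supported. Establishing the uniform non-degeneracy and transversality with explicit constants --- these are open conditions satisfied by Schwarzschild on the compact set $B_{bound}$, which must then be propagated to nearby metrics --- and controlling the fibre integrals $\rho[g], p[g]$ near the turning points, where $\dot r$ vanishes like a square root, is where the bulk of the analysis lies; once these estimates are in hand, the fixed-point argument and the two-sided Schwarzschild matching are routine.
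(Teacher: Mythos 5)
Your proposal is correct and follows essentially the same route as the paper: the same reduction of the field equations to the quasi-local mass and $\mu'$ equations, the same justification that $f^\delta$ solves the Vlasov equation because the cut-off transitions only in the classically forbidden region, the same uniform-in-$(E,\ell)$ robustness of the three transverse roots of $E^g_\ell(r)=E^2$ over the compact set $B_{bound}$ as the technical core, and the same two-sided Schwarzschild matching. The only cosmetic difference is that you close the argument with a direct contraction mapping (with Lipschitz constant $O(\delta)$ coming from the amplitude of $\Phi$), whereas the paper invokes the implicit function theorem for the operator $G(\mu;\delta)$, whose linearisation at $(\mu^{Sch},0)$ reduces to the identity under the hypothesis $\Phi(\cdot,\cdot;0)=\partial_\ell\Phi(\cdot,\cdot;0)=0$ --- the same smallness mechanism in different packaging.
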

\begin{remark}
\label{rem1}
Compared to the work of Rein \cite{rein1994static}, the solutions constructed here are bifurcating from Schwarzschild and therefore, a priori small.  Moreover, the ansatz used in this work is different from that used by Rein. In particular, it is not polytropic. Another difference  is that we do not use the Tolman-Oppenheimer-Volkov equation \eqref{TOV::equation} in our argument. See Theorem \ref{main::result} for the exact assumptions on the profile $\Phi$.
\end{remark}
\begin{remark}
\label{rem2}
A posteriori, in the small data case, one can check that the distribution functions constructed by Rein are supported on trapped timelike geodesics. However, not all the trapped geodesics are admissible in his construction. In our case, we include a more general support, cf. Section \ref{Rein:work}  for more details on this part. 
\end{remark}
\begin{remark}
The support of $\Phi(E, \ell; \delta)$ has two connected components: one corresponds to geodesics which start  near $r_0(E, \ell)$ and reach the horizon in a finite proper time, and the other one corresponds to trapped geodesics. $\Psi$ is introduced so that it is equal to $0$ outside $B_{bound}$ and equal to a cut-off function depending on the $r$ variable (cf \eqref{cut:off:bis}),  $\Chi$ on $B_{bound}$. The latter is equal to $0$ on the first connected component of the support of $\Phi(E, \ell; \delta)$ and to $1$ on the second component. This allows to eliminate the undesired trajectories.  The reason behind the use of this cut-off function is related to the non-validity of Jean's theorem in general relativity \cite{schaeffer1999class}.  
\end{remark}

The paper is organised as follows.  In  Section \ref{Preliminaries}, we present basic background material on the Einstein-Vlasov system and the geodesic motion in the Schwarzschild exterior. We also present the ansatz for the metric and the distribution function and we reduce the Einstein equations to a system of ordinary differential equations in the metric components. We end this section with a short description of Rein's construction to compare it with ours. In Section \ref{main:result},  we give a detailed formulation of  our main result. In Section \ref{Set-up}, we prove Theorem \ref{thm::1}. To this end, we control quantitatively the effective potential and the resulting trapped timelike geodesics for static spherically symmetric spacetimes close to Schwarzschild. The main theorem is then obtained by application of the implicit function theorem. Finally, Appendix \ref{Schwarzschild} contains a proof of Proposition \ref{Propo1} concerning the classification of timelike geodesics in the Schwarzschild spacetime.

\begin{thank}
I would like to thank my advisor Jacques Smulevici for suggesting this problem to me, as well for many interesting discussions and crucial suggestions. I would also like to thank Nicolas Clozeau for  many helpful remarks. This work was supported by the ERC grant 714408 GEOWAKI, under the European Union’s Horizon 2020 research and innovation program.
\end{thank}

\section{Preliminaries and basic background material}
\label{Preliminaries}
In this section, we introduce basic material necessary for the rest of the paper.
\subsection{The Einstein-Vlasov system}
In this work, we study the Einstein field equations for a time oriented Lorentzian manifold  $(\spacetime, g)$ in the presence of matter
\begin{equation}
\label{EFE}
    Ric(g)-\frac{1}{2}gR(g) = 8\pi T(g),
\end{equation}
where $Ric$ denotes the \textit{Ricci curvature tensor} of $g$, $R$ denotes the \textit{scalar curvature} and $T$ denotes the \textit{energy-momentum tensor}  which must be specified by the matter theory. The model considered here is  the Vlasov matter. It is assumed that the latter  is represented by a scalar positive function $f:T\spacetime\to \mathbb R$  called the \textit{distribution function}. The condition that $f$ represents the distribution of a collection of particles moving freely in the given spacetime is that it should be constant along the \textit{geodesic  flow}, that is
\begin{equation}
\label{Liouville::}
L[f] = 0, 
\end{equation} 
where $L$ denotes the \textit{Liouville vector field}. The latter equation is called \textit{the Vlasov equation}.  In a local coordinate chart $(x^\alpha, v^\beta)$ on $T\spacetime$, where $(v^\beta)$ are the components of the four-momentum corresponding to $x^\alpha$, the Liouville vector field  $L$ reads  
\begin{equation}
\label{Liouville:VF}
L = v^\mu\frac{\partial }{\partial x^\mu}-\Chris{\mu}{\alpha}{\beta}(g)v^\alpha v^\beta\frac{\partial }{\partial v^\mu}
\end{equation}
and the corresponding integral curves satisfy the geodesic equations of motion
\begin{equation}
\label{eq::motion1}
\left\{
\begin{aligned}
&\frac{dx^\mu}{d\tau}(\tau) = v^\mu, \\
&\frac{dv^\mu}{d\tau}(\tau) = -\Chris{\mu}{\alpha}{\beta}v^\alpha v^\beta,
\end{aligned}
\right. 
\end{equation}
where $\displaystyle \Chris{\mu}{\alpha}{\beta}$ are the Christoffel symbols given in the local coordinates $x^\alpha$ by 
\begin{equation*}
\Chris{\mu}{\alpha}{\beta} = \frac{1}{2}\ginv{\mu}{\nu}\left( \frac{\partial\g{\beta}{\nu}}{\partial x^\alpha}+\frac{\partial\g{\alpha}{\nu}}{\partial x^\beta} - \frac{\partial \g{\alpha}{\beta}}{\partial x^\nu}   \right)
\end{equation*}
and where $\tau$ is an affine parameter which corresponds to the proper time in the case of timelike geodesics. The trajectory of a particle in $T\spacetime$ is an element of the geodesic flow generated by $L$ and its projection onto the spacetime manifold $\spacetime$ corresponds to a geodesic of the spacetime. 
\\It is easy to see that the quantity $\displaystyle \mathcal L(x,v) := \frac{1}{2}v^\alpha v^\beta\g{\alpha}{\beta}$ is conserved along solutions of \eqref{eq::motion1} \footnote{We note that $\mathcal L$ is the Lagrangian of a free-particle.}. In the case of timelike geodesics this corresponds to the conservation of the rest mass $m>0$ of the particle. In the following, we will consider only particles with the same rest mass so that we can set $\displaystyle \mathcal L(x,v) = -\frac{m^2}{2}$. Furthermore, for physical reasons, we require that all particles move on future directed timelike geodesics. Therefore, the distribution function is supported on the seven dimensional manifold of $T\spacetime$ \footnote{See \cite{sarbach2014geometry}  Lemma.7}, called the \textit{the mass shell}, denoted by $\Gamma_m$ and defined by 
\begin{equation}
\label{mass::shell}
\Gamma_m := \left\{ (x,v)\in T\spacetime : g_x(v,v)= -m^2,  \quad\text{and}\quad v^\alpha  \text{ is future pointing}\right\}. 
\end{equation}
We note that by construction $\Gamma_m$ is invariant under the geodesic flow.  
\\ 
\\  We assume that there exist local coordinates on $\spacetime$, denoted by $(x^\alpha)_{\alpha=0\cdots 3}$ such that $x^0 = t$ is a smooth strictly increasing function along any future causal curve and its gradient is past directed and timelike. Then, the condition 
\begin{equation*}
\g{\alpha}{\beta}v^\alpha v^\beta = -1 \quad\quad \text{where}\; v^\alpha \text{is future directed }
\end{equation*}
allows to write $v^0$ in terms of $(x^\alpha, v^a)$. It is given by 
\begin{equation*}
v^0 = -(\g{0}{0})^{-1}\left(\g{0}{j}v^j +\sqrt{(\g{0}{j}v^j)^2-\g{0}{0}(m^2 + \g{i}{j}v^iv^j)}\right).
\end{equation*}
Therefore, $\Gamma_m$ can be parametrised by $(x^0, x^a, v^a)$.  Hence, the distribution function can be written as a function of $(x^0, x^a, v^a)$ and the Vlasov equation has the form
\begin{equation}
\label{Vlasov2}
\displaystyle \frac{\partial f}{\partial x^0} + \frac{v^a}{v^0}\frac{\partial f}{\partial x^a} - \Chris{a}{\alpha}{\beta}\frac{v^\alpha v^\beta}{v^0}\frac{\partial f}{\partial v^a} = 0.
\end{equation}
In order to define the energy-momentum tensor which couples the Vlasov equation to the Einstein field equations, we introduce the natural  volume element on the fibre $\Gamma_{m,x}:= \left\{ v^\alpha\in T_x\spacetime \;:\;  \ginv{\alpha}{\beta}v^\alpha v^\beta = -m^2, \; v^0>0 \right\}$ of $\Gamma_m$ at a point $x\in\spacetime$ given in  the adapted local coordinates  $(x^0, x^a, v^a)$ by
\begin{equation}
\label{vol::form}
d\text{vol}_x(v) := \frac{\sqrt{-\det{(\g{\alpha}{\beta})}}}{-v_0}dv^1dv^2dv^3.  
\end{equation}
The energy momentum tensor is now defined by
\begin{equation}
\label{EM_tensor}
\forall x\in\spacetime\;\quad\T{\alpha}{\beta}(x):= \int_{\Gamma_{m,x}} v_\alpha v_\beta f(x,v)\;d\text{vol}_x(v),
\end{equation}
where $f = f(x^0,x^a, v^a)$ and $d\text{vol}_x(v) = d\text{vol}_x(v^a)$\footnote{The latin indices run from 1...3.}. In order for \eqref{EM_tensor} to be well defined, $f$ has to have certain regularity and decay properties. One sufficient requirement would be to demand that $f$ have compact support on  $\Gamma_x$, $\forall x\in \spacetime$ and it is integrable with respect to $v$. 
In this work, we assume that all particles have the same rest mass and it is normalised to $1$.  From now on, the mass shell $\Gamma_1$ will be denoted by $\Gamma$. Finally, we refer to \eqref{EFE} and \eqref{Vlasov2} with $T$ given by \eqref{EM_tensor} as the Einstein-Vlasov system.

\subsection{Static and Spherically symmetric solutions}

\subsubsection{Metric ansatz}
We are looking for static and spherically symmetric asymptotically flat solutions to the EV system. Therefore, we consider the following ansatz for the metric, written in standard $\displaystyle (t, r, \theta, \phi)$ coordinates:
\begin{equation}
\label{metric::ansatz}
g= -e^{2\mu(r)} dt^2 + e^{2\lambda(r)}dr^2 + r^2(d\theta^2 + \sin^2\theta d\phi^2).
\end{equation}
with boundary conditions at infinity: 
\begin{equation}
\lim_{r\to\infty} \lambda(r) = \lim_{r\to\infty} \mu(r) = 0. 
\end{equation}
Since we consider solutions close to a Schwarzschild spacetime of mass $M>0$ outside from the black hole region, we fix a manifold of the form
\begin{align}
\label{domain::}
\mathcal O := &\mathbb R\times]2M, \infty[\times S^2.  \\
& \quad t\quad\quad r \quad \quad\theta, \phi 
\end{align} 

\subsubsection{Vlasov field on static and spherically symmetric spacetimes}
The distribution function $f$ is conserved along the geodesic flow. Hence, any function of the integrals of motion will satisfy the Vlasov equation. In this context, we  look for integrals of motion for the geodesic equation \eqref{eq::motion1} on a static and spherically symmetric background. By symmetry assumptions, the vector fields 
 \begin{equation}
\xi_t = \frac{\partial}{\partial t},
\end{equation}
generating stationarity, and 
\begin{equation}
\xi_1 = -\cos\phi\cot\theta\frac{\partial}{\partial\phi}-\sin\phi\frac{\partial}{\partial\theta},
\end{equation}
\begin{equation}
\xi_2 = -\sin\phi\cot\theta\frac{\partial}{\partial\phi}+\cos\phi\frac{\partial}{\partial\theta},
\end{equation}
\begin{equation}
\xi_3 = \frac{\partial}{\partial \phi}
\end{equation}
generating spherical symmetry are Killing. Therefore, the quantities 
\begin{equation}
E(x,v) := -g_x(v,\xi_t) = e^{\mu(r)}\sqrt{m^2+ (e^{\lambda(r)}v^r)^2 + (rv^\theta)^2 + (r\sin\theta v^\phi)^2} ,
\end{equation}
\begin{equation}
\ell_1(x,v) := g_x(v,\xi_1) = r^2\left(-\sin\phi v^\theta - \cos\theta\sin\theta\cos\phi v^\phi\right),
\end{equation}
\begin{equation}
\ell_2(x,v) := g_x(v,\xi_2) = r^2\left(\cos\phi v^\theta - \cos\theta\sin\theta\sin\phi v^\phi\right),
\end{equation}
\begin{equation}
\ell_z(x,v) := g_x(v,\xi_3) = r^2\sin^2\theta v^\phi,
\end{equation}
are conserved. In particular, $E$ and
\begin{equation}
\label{ang::momentum}
\ell:= \ell(r, \theta, v^\theta, v^\phi) = \ell_1^2+\ell_2^2+\ell_z^2 = r^4\left((v^\theta)^2 + \sin^2\theta(v^\phi)^2\right)
\end{equation}
are conserved. We recall that $m:= -\g{\alpha}{\beta}v^\alpha v^\beta$ is also a conserved quantity. Since all particles have the same rest mass $m=1$, $E$ becomes
\begin{equation}
\label{energy}
E = E(r, v^r, v^\theta, v^\phi) = e^{\mu(r)}\sqrt{1+ (e^{\lambda(r)}v^r)^2 + (rv^\theta)^2 + (r\sin\theta v^\phi)^2}. 
\end{equation}
Note that $E$ and $\ell$ can be interpreted respectively as the energy and the total angular momentum of the particles. 

\subsection{The Schwarzschild spacetime and its timelike geodesics}
The Schwarzschild family of spacetimes is a one-parameter family of spherically symmetric  Lorentzian manifolds, indexed by $M>0$, which are solutions to the Einstein vacuum equations
\begin{equation}
\label{EVE}
Ric(g) = 0.
\end{equation}  
The parameter $M$ denotes the ADM mass. The \textit{domain of outer communications} of these solutions can be represented by $\mathcal O $ and a metric which takes the form 
\begin{equation}
\label{metric:ansatz}
g_{Sch}= -e^{2\mu^{Sch}(r)} dt^2 + e^{2\lambda^{Sch}(r)}dr^2 + r^2(d\theta^2 + \sin^2\theta d\phi^2),
\end{equation}
where 
\begin{equation}
\label{Sch::met}
e^{2\mu^{Sch}(r)} = 1 - \frac{2M}{r}\quad\text{and}\quad e^{2\lambda^{Sch}(r)} = \left(1 - \frac{2M}{r}\right)^{-1}.
\end{equation}
%, bounded by the the two surface $\displaystyle \mathcal H := \left\{ (t,r,\theta,\phi) \;:\; t\in\mathbb R, \quad r = 2M, \quad \theta\in(0,\pi), \quad \phi\in0,2\pi)\right\}$. In particular we are interested in the ranges for $(E, \ell)$ of a particle moving along trapped geodesic.
%We note there are no coordinate singularities for $r>2M$ except at the poles $\theta = 0, \pi$. The latter ones can be removed by working in Cartesian coordinates $(x^1, x^2, x^3) = r(\cos\phi\sin\theta, \sin\phi\sin\theta, \cos\theta)$ instead of $(r, \theta, \phi)$.  
In this work, we shall be interested in particles moving in the exterior region. We note that the study of the geodesic motion is included in the classical books of general relativity. See for example \cite[Chapter 3]{Chandrasekhar:1985kt} or \cite[Chapter 33]{misner2017gravitation}. We recall here a complete classification of timelike geodesics in order to be self contained. 
\\The delineation of Schwarzschild's geodesics requires solving the geodesic equations given by \eqref{eq::motion1}. Therefore, we need to integrate a system of 8 ordinary differential equations. However, the symmetries of the Schwarzschild spacetime and the mass shell condition 
\begin{equation}
\label{m:s:condition}
\mathcal L(x,v) = -\frac{m^2}{2}
\end{equation}
 imply the complete integrability of the system.
\\
\\Let $(x^\alpha, v^\alpha) = (t, r, \theta, \phi, v^t, v^r, v^\theta, v^\phi)$ be a local coordinate system on $T\spacetime$. The quantities 
\begin{equation*}
E = -v_t, \quad \ell = r^4\left( (v^\theta)^2 + \sin^2\theta(v^\phi)^2\right), \quad \ell_z = r\sin\theta v^\phi.
\end{equation*}
are then conserved along the geodesic flow. Besides, the mass shell condition on $\Gamma$ takes the form
\begin{equation*}
-e^{2\mu^{Sch}(r)}(v^t)^2 + e^{2\lambda^{Sch}(r)}(v^r)^2 + r^2\left( (v^\theta)^2 + \sin^2\theta(v^\phi)^2\right) = -1
\end{equation*}
which is equivalent to 
\begin{equation}
\label{r::pr}
E^2 = \Esch(r) + (e^{\lambda^{Sch}}v^r)^2, \quad\text{where}\quad \Esch(r) = e^{2\mu^{Sch}(r)}\left( 1+ \frac{\ell}{r^2}\right). 
\end{equation}
and implies
\begin{equation}
\label{pot::ineq}
\Esch(r) \leq E^2
\end{equation}
for any geodesic moving in the exterior region. 
\\Let $\gamma : I \to\spacetime$ be a timelike geodesic in the spacetime, defined on interval $ I \subset\mathbb R$. In the adapted local coordinates $(x^\alpha, v^\alpha)$, we have 
\begin{equation*}
\gamma(\tau) = (t(\tau), r(\tau), \theta(\tau), \phi(\tau)) \quad\text{and}\quad \gamma'(\tau) = (v^t(\tau), v^r(\tau), v^\theta(\tau), v^\phi(\tau)) .
\end{equation*}
Besides, $\gamma$ satisfies the geodesic equations of motion  \eqref{eq::motion1}. One can easily see from the equations 
\begin{align}
\label{eq1}
v_t = -E, \\
\label{eq2}
\quad   r\sin\theta v^\phi = \ell_z, \\
\label{eq3}
\quad  (rv^\theta)^2+\frac{\ell_z^2}{\sin^2\theta} = \ell, \\
\label{eq4}
\quad (e^{\lambda^{Sch}(r)}v^r)^2  - e^{-2\mu^{Sch}(r)} E^2 + \left( 1 + \frac{\ell}{r^2}\right) = 0
\end{align}
that if we solve the geodesic motion in the radial direction $r(\tau)$ then we can integrate the remaining equations of motion. More precisely, if one solves for $r$ then we get $t$ from \eqref{eq1}, $\phi$ from \eqref{eq2} and $\theta$ from \eqref{eq3}. Therefore, we will study the geodesic equation projected only in the radial direction i.e we consider the reduced system
\begin{align}
\frac{dr}{d\tau} &= v^r ,\\
\frac{dv^r}{d\tau} &= -\Chris{r}{\alpha}{\beta}v^\alpha v^\beta.
\end{align} 
Straightforward computations of the right hand side of the second equation lead to 
\begin{equation*}
\Chris{r}{\alpha}{\beta}v^\alpha v^\beta = \frac{M}{r^2}\left(1-\frac{2M}{r}\right)^{-1}E^2 - \frac{M}{r^2}\left(1-\frac{2M}{r}\right)^{-1} (v^r)^2 - \frac{1}{r^3}\left(1-\frac{2M}{r}\right)\ell. 
\end{equation*}
We combine the latter expression with \eqref{eq4}, to obtain
\begin{equation*}
\frac{dv^r}{d\tau} = -\frac{M}{r^2}\left(1+\frac{\ell}{r^2}\right) + \frac{\ell}{r^3}\left(1 - \frac{2M}{r}\right).
\end{equation*}
Now, by \eqref{r::pr}, it is easy to see that 
\begin{equation*}
\frac{dv^r}{d\tau} = -\frac{1}{2}{\Esch}'(r). 
\end{equation*}
In order to lighten the notations, we denote $v^r$ by $w$. Thus, we consider the differential system 
\begin{align}
\label{r:motion}
\frac{dr}{d\tau} &= w ,\\
\label{pr:motion}
\frac{dw}{d\tau} &= -\frac{1}{2}{\Esch}'(r).
\end{align} 
We classify solutions of \eqref{r:motion}-\eqref{pr:motion} based on the shape of $\Esch$ as well as the roots of the equation 
\begin{equation}
\label{eq::motion:}
E^2 = \Esch(r). 
\end{equation}
More precisely, we state the following proposition. We include a complete proof of this classical result in Appendix \ref{Schwarzschild}. 
\begin{Propo}
\label{Propo1}
Consider a timelike geodesic $\gamma:I\to\spacetime$ of the Schwarzschild exterior of mass $M>0$  parametrised by $\displaystyle \gamma(\tau) = (t(\tau), r(\tau), \theta(\tau), \phi(\tau))$ and normalised so that $\displaystyle g_{Sch}(\dot\gamma, \dot\gamma) = -1$. Let $E$ and $\ell$ be the associated energy and total angular momentum defined respectively by $\displaystyle  E := -v_t $ and  $ \ell := r^4\left( (v^\theta)^2 + \sin^2\theta(v^\phi)^2\right)$. Then, we have the following classification
\begin{enumerate}

\item If $\displaystyle \ell\leq 12M^2$, then

\begin{enumerate}
\item if $\displaystyle \sqrt{\frac{8}{9}}<E< 1$, then the orbit starts at some point $r_0>2M$, the unique root of the equation \eqref{eq::motion:}, and reaches the horizon $r=2M$ in a finite proper time. 
\item if $E\geq 1$, then the equation \eqref{eq::motion:} admits no positive roots, and the orbit goes to infinity $r = +\infty$ in the future, while in the past, it reaches the horizon $r=2M$ in finite affine time. 
\item if $\displaystyle E = \sqrt{\frac{8}{9}}$, then $\ell = 12M^2$. In this case,  the equation \eqref{eq::motion:} admits a unique triple root, given by $r_c = 6M$. The orbit is circular of radius $r_c$.
\end{enumerate}

\item If $\displaystyle \ell> 12M^2$, then

\begin{enumerate}

\item if $\ell = \ell_{lb}(E)$, the geodesic is a circle of radius $r_{max}^{Sch}(\ell)$, given by

\begin{equation}
\label{r::max}
r_{max}^{Sch}(\ell) = \frac{\ell}{2M}\left( 1 - \sqrt{1-\frac{12M^2}{\ell}}\right).
\end{equation} 
Here, $\ell_{lb}(E)$ is given by
\begin{equation}
\label{l::lb}
\ell_{lb}(E) := \frac{12M^2}{1-4\alpha-8\alpha^2+8\alpha\sqrt{\alpha^2+\alpha}}, \quad\quad \alpha := \frac{9}{8}E^2 - 1.
\end{equation}

\item if $\ell < \ell_{lb}(E)$,  

\begin{enumerate}
\item if $E\geq 1$, then the equation \eqref{eq::motion:} admits no positive roots. Hence, the orbit goes to infinity $r = +\infty$ in the future, while in the past, it reaches the horizon $r=2M$ in finite affine time. 
\item otherwise, the orbit starts at some point $r_0> 2M$, the unique root of the equation \eqref{eq::motion:}, and reaches the horizon ($r=2M$) in a finite proper time. 
\end{enumerate}

\item otherwise, if $\ell > \ell_{lb}(E)$, 

\begin{enumerate}

\item if $E\geq 1$, then the equation \eqref{eq::motion:} admits two positive distinct roots $2M<r_0 < r_2$ and two orbits are possible 

\begin{enumerate}
\item The geodesic starts at $r_0$ and reaches the horizon in a finite proper time. 
\item The orbit is hyperbolic, so unbounded. The orbit starts from infinity, hits the potential barrier at $r_2$, the biggest root of the equation \eqref{eq::motion:},  and goes back to infinity.  
\end{enumerate} 

\item otherwise, if $E< 1$,  and necessarily $\displaystyle \ell \leq \ell_{ub}(E)$, where $\ell_{ub}(E)$ is given by

\begin{equation}
\label{l::ub}
\ell_{ub}(E) := \frac{12M^2}{1-4\alpha-8\alpha^2-8\alpha\sqrt{\alpha^2+\alpha}},   
\end{equation}

\begin{enumerate}

\item if $\ell = \ell_{ub}(E)$, the geodesic is a circle of radius $r_{min}^{Sch}(\ell)$, given by 

\begin{equation}
\label{r::min}
r_{min}^{Sch}(\ell) = \frac{\ell}{2M}\left( 1 + \sqrt{1-\frac{12M^2}{\ell}}\right). 
\end{equation} 

\item If $\ell < \ell_{ub}(E)$, then the equation \eqref{eq::motion:} admits three distinct roots $2M<r_0^{Sch}(E, \ell)< r_1^{Sch}(E, \ell)< r_2^{Sch}(E, \ell)$, and two orbits are possible

\begin{itemize}
\item The geodesic starts from $r_0^{Sch}(E, \ell)$ and reaches the horizon in a finite time.
\item The geodesic is periodic. It oscillates between an aphelion $r_2^{Sch}(E, \ell)$ and a perihelion $r_1^{Sch}(E, \ell)$.  
\end{itemize}

%\end{enumerate}

\end{enumerate}

\end{enumerate}

\end{enumerate} 

\end{enumerate}

\end{Propo}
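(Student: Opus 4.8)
The plan is to reduce the analysis entirely to the one-dimensional problem \eqref{r:motion}--\eqref{pr:motion}, treating it as a particle of energy $E^2$ in the potential well $\Esch(r)$, and to classify the qualitative behaviour of the orbits by locating and counting the roots of $\Esch(r) = E^2$ together with the sign of $w = v^r$. Since $\Esch(r)\to +\infty$ as $r\to 2M^+$ and $\Esch(r)\to 1$ as $r\to\infty$, the global picture is governed by the critical points of $\Esch$. The first step is therefore to compute ${\Esch}'(r)$ and to show that $\Esch$ has at most two critical points in $]2M,\infty[$; a direct calculation gives ${\Esch}'(r) = \tfrac{2}{r^4}\bigl(Mr^2 - \ell r + 3M\ell\bigr)$, so the critical points are the roots of the quadratic $Mr^2 - \ell r + 3M\ell = 0$, whose discriminant $\ell^2 - 12M^2\ell = \ell(\ell - 12M^2)$ changes sign exactly at $\ell = 12M^2$. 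This immediately yields the dichotomy in the statement: for $\ell \le 12M^2$ the potential $\Esch$ is monotone decreasing (with a degenerate inflection at $\ell = 12M^2$, $r_c = 6M$), while for $\ell > 12M^2$ it has a local maximum at $r_{max}^{Sch}(\ell)$ and a local minimum at $r_{min}^{Sch}(\ell)$, with the explicit formulas \eqref{r::max}, \eqref{r::min} obtained from the quadratic formula.

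The second step is to evaluate $\Esch$ at these critical points to get the threshold values of $E^2$ that separate the regimes. Plugging $r_{max}^{Sch}(\ell)$ and $r_{min}^{Sch}(\ell)$ into $\Esch$ and inverting the resulting relations in $E^2$ yields precisely $\ell = \ell_{lb}(E)$ (the value of $\ell$ for which $E^2 = \Esch(r_{max}^{Sch})$, i.e.\ energy sitting exactly at the barrier top, giving the unstable circular orbit) and $\ell = \ell_{ub}(E)$ (energy at the bottom of the well, giving the stable circular orbit); the substitution $\alpha = \tfrac 98 E^2 - 1$ is what linearizes these expressions into the closed forms \eqref{l::lb}, \eqref{l::ub}. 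Here one also checks the boundary case $E = \sqrt{8/9}$, which forces $\alpha = 0$ and collapses both thresholds to $\ell = 12M^2$, $r_c = 6M$, recovering the triple-root circular orbit. Once the relative position of $E^2$ with respect to $\Esch(2M^+) = +\infty$, $\Esch(r_{max}^{Sch})$, $\Esch(r_{min}^{Sch})$, and $\Esch(\infty) = 1$ is known, a standard monotonicity/intermediate-value argument produces the exact number of roots of \eqref{eq::motion:} in each sub-case (zero, one, two, or three), and the sign analysis of \eqref{pr:motion} — $w$ increases where ${\Esch}'<0$ and decreases where ${\Esch}'>0$ — determines which of the allowed orbits (plunging to the horizon, bounded/periodic between perihelion and aphelion, or hyperbolic escape) actually occurs for each admissible initial condition.

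The third step is to upgrade the qualitative statements about the $r$-motion to the claimed finiteness statements: that a plunging orbit reaches $r = 2M$ in finite proper time, and that the escaping orbit reaches $r = 2M$ in finite \emph{affine} time in the past. For the timelike plunge one integrates $d\tau = dr/|w|$ with $w^2 = E^2 - \Esch(r)$; near $r = 2M$ the integrand stays bounded (since $\Esch\to+\infty$ makes this region inaccessible unless $E^2$ is large, in which case $w^2$ is bounded below near $2M$), so the integral converges — the only subtlety is near a simple turning point $r_0$, where $w^2$ vanishes linearly and $dr/|w|\sim (r-r_0)^{-1/2}$ is still integrable. The remaining equations for $t,\theta,\phi$ are then integrated from \eqref{eq1}--\eqref{eq3} as explained before the proposition, which is why it suffices to control the radial motion.

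I expect the main obstacle to be purely computational bookkeeping rather than conceptual: carrying out the evaluation of $\Esch$ at $r_{max}^{Sch}(\ell)$ and $r_{min}^{Sch}(\ell)$ and algebraically inverting to obtain the clean forms \eqref{l::lb} and \eqref{l::ub} (and verifying the consistency condition $\ell_{lb}(E) \le \ell \le \ell_{ub}(E)$ that delimits the existence of three roots) is where sign errors are easy to make, especially in tracking which branch of the square root corresponds to the maximum versus the minimum. A secondary care point is the behaviour exactly at the critical energies, where one must argue that a double root of $\Esch(r) = E^2$ corresponds to an (unstable or stable) circular orbit rather than an orbit asymptotic to it — this follows from uniqueness for the ODE system \eqref{r:motion}--\eqref{pr:motion}, since $(r,w) = (r_{crit},0)$ is itself an equilibrium. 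Apart from these, the argument is a textbook phase-plane analysis, which is why the full details are deferred to Appendix \ref{Schwarzschild}.
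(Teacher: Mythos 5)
Your overall strategy --- a phase--plane analysis of the reduced system \eqref{r:motion}--\eqref{pr:motion}, counting roots of $\Esch(r)=E^2$ via the critical points of $\Esch$ and inverting $\Esch(r_{max}^{Sch})=E^2$, $\Esch(r_{min}^{Sch})=E^2$ through the substitution $\alpha=\tfrac98E^2-1$ --- is exactly the approach of Appendix \ref{Schwarzschild}. However, your description of the shape of the potential is wrong at the horizon, and this error propagates through the whole classification. From \eqref{r::pr} one has $\Esch(r)=\left(1-\tfrac{2M}{r}\right)\left(1+\tfrac{\ell}{r^2}\right)$, so $\Esch(r)\to 0$ as $r\to 2M^+$ (the first factor vanishes while the second stays finite), not $+\infty$; and for $\ell<12M^2$ the quadratic $Mr^2-\ell r+3M\ell$ has negative discriminant and positive leading coefficient, so ${\Esch}'>0$ and $\Esch$ is monotonically \emph{increasing} from $0$ to $1$, not decreasing. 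The absence of an infinite centrifugal barrier at $r=2M$ is precisely the relativistic feature that makes the plunging orbits of cases $1.(a)$, $2.(b).ii$, etc.\ possible: with your picture ($\Esch\to+\infty$ at the horizon, decreasing for small $\ell$) the region near $r=2M$ would be energetically forbidden for every finite $E$, no orbit could reach the horizon, and for $\ell\le 12M^2$, $E<1$ the constraint $\Esch(r)\le E^2$ would have no solutions at all --- contradicting the statement you are proving.

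The same error undermines your finite--proper--time argument: you write that the integrand of $\int dr/|w|$ is bounded near $r=2M$ ``since $\Esch\to+\infty$ makes this region inaccessible unless $E^2$ is large,'' which is internally inconsistent (an inaccessible region cannot be reached in finite or infinite time). The correct justification is the opposite one: since $\Esch\to 0$ at the horizon, $w^2=E^2-\Esch(r)\ge E^2-\sup_{]2M,r_0]}\Esch>0$ is bounded below on the plunging region, so $dr/|w|$ is bounded there and integrable across the simple turning point $r_0$ where $w^2$ vanishes linearly. With the potential's boundary values corrected ($\Esch(2M^+)=0$, $\Esch(\infty)=1$, local max at $r_{max}^{Sch}<r_{min}^{Sch}$ for $\ell>12M^2$), the rest of your root--counting and sign analysis goes through and coincides with the paper's proof; the equilibrium/uniqueness argument you give for the circular orbits at double roots matches the paper's characterisation \eqref{circ::motion}.
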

\begin{remark}
In view of \eqref{pot::ineq}, the lower bound $\displaystyle E\geq \sqrt{\frac{8}{9}}$ holds along any orbit.  
\end{remark}
In view of the above proposition, we introduce $\Abound$, the set of  $(E,\ell)$ parameters corresponding to trapped (non-circular) geodesics. 
\begin{definition}
\begin{equation}
\label{A::bound}
\mathcal  A_{bound} := \left\{ (E, \ell)\in\ELrange \;:\; E <1 ,\quad \ell_{lb}(E) <\ell< \ell_{ub}(E) \right\}. 
\end{equation}
\end{definition}
Note in view of the orbits described in $2.c.ii.B$ that given $(E, \ell)\in\Abound$, two cases are possible: the orbit is either trapped or it reaches the horizon in a finite proper time.  
\subsection{Ansatz for the distribution function}
We are interested in static and spherically symmetric distribution functions. For simplicity, we fix the rest mass of the particles to be $1$. In this context, we assume that $f:\Gamma_1\to\mathbb R_+$ takes the form
\begin{equation}
\label{f::ansatz}
f(t,r,\theta,\phi, v^r, v^\theta, v^\phi) =  \Phi(E, \ell)\Psi_\eta(r, (E, \ell), \mu).
\end{equation}
where 
\begin{itemize}
\item $\displaystyle \Phi : \Abound \to \mathbb R_+$ is a $C^2$ function on its domain and it is supported on $\displaystyle B_{bound}$, a set of the form $\displaystyle  [E_1, E_2]\times[\ell_1, \ell_2]$, where $E_1, E_2$, $\ell_1$ and $\ell_2$ verify
\begin{equation}
\label{bounds:}
\sqrt{\frac{8}{9}}<E_1<E_2<1 \quad\text{and}\quad \ell_{lb}(E_2)<\ell_1<\ell_2<\ell_{ub}(E_1).
\end{equation} 
\item $\eta> 0$ is constant that will be specified later, $\Psi_\eta(\cdot, \cdot, \mu)\in C^\infty(\mathbb R\times\Abound, \mathbb R_+)$ is a cut-off function depending on the metric coefficient $\mu$, such that     
\begin{equation}
\label{cut::off}
\Psi_\eta(\cdot, (E, \ell), \mu) := 
\left\{
\begin{aligned}
& \Chi_\eta(\cdot - r_1(\mu, (E, \ell))), \quad (E, \ell)\in B_{bound}\\
& 0 \quad\quad\quad\quad\quad\quad\quad\quad\quad\quad (E, \ell)\notin \Abound,
\end{aligned}
\right.
\end{equation}
where $r_1$ is a positive function of $(\mu, E, \ell)$ which will be defined later \footnote{$r_1$ will be the second largest root of the equation $\displaystyle e^{2\mu(r)}\left(1 + \frac{\ell}{r^2} \right) = E^2$ corresponding to the metric $g$.} and $\Chi_\eta\in C^\infty(\mathbb R, \mathbb R_+)$ is a cut-off function defined by 

\begin{equation}
\label{cut:off:bis}
\Chi_\eta(s) :=
\left\{
\begin{aligned}
& 1 \quad s\geq 0 , \\
& \leq1 \quad s\in [-\eta, 0] , \\
& 0 \quad s< -\eta. 
\end{aligned}
\right. 
\end{equation}

\item $E$ and $\ell$ are defined respectively by \eqref{energy}, \eqref{ang::momentum}.
\end{itemize}
Based on the monotonicity properties of $\ell_{ub}$, $\ell_{ub}$ $r_{min}^{Sch}$ and $r_{max}^{Sch}$, the set $B_{bound}$ is included in $\Abound$. More precisely, we state the following lemma 
\begin{lemma}
\label{monotonicity:1}
Let $\displaystyle\ell\in]12M^2, \infty[$
\begin{itemize}
\item $\displaystyle r_{max}^{Sch}$ decreases monotonically from $6M$ to $3M$ on $\displaystyle ]12M^2, \infty[$.
\item $\displaystyle r_{min}^{Sch}$ increases monotonically from $6M$ to $\infty$ on $\displaystyle ]12M^2, \infty[$. 
\end{itemize}
Let $\displaystyle E\in\left]\sqrt{\frac{8}{9}}, 1\right[$,
\begin{itemize}
\item $\displaystyle \ell_{lb}$ grows monotonically from $12M^2$ to $16M^2$ when $E$ grows from $\displaystyle \sqrt{\frac{8}{9}}$ to $1$.
\item $\displaystyle \ell_{ub}$ grows monotonically from $12M^2$ to $\infty$ when $E$ grows from $\displaystyle \sqrt{\frac{8}{9}}$ to $1$.
\end{itemize}
Moreover, $\forall (E, \ell)\in[E_1, E_2]\times\left]\ell_{lb}(E_2), \ell_{lb}(E_1) \right[$
 \begin{equation*}
 \ell_{lb}(E)< \ell_{lb}(E_2)< \ell<\ell_{ub}(E_1) < \ell_{ub}(E).  
\end{equation*}
\end{lemma}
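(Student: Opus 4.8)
I would establish the four monotonicity statements separately, in two groups. The assertions on $r_{max}^{Sch}$ and $r_{min}^{Sch}$ reduce to elementary manipulations of the closed forms \eqref{r::max}, \eqref{r::min}. For the assertions on $\ell_{lb}$ and $\ell_{ub}$ I would first pass to the variable $\alpha=\tfrac{9}{8}E^{2}-1$ of \eqref{l::lb}: on $\left]\sqrt{8/9},1\right[$ the map $E\mapsto\alpha$ is a smooth strictly increasing bijection onto $\left]0,\tfrac18\right[$, so it suffices to prove that $\ell_{lb}$ and $\ell_{ub}$, viewed as functions of $\alpha\in\left]0,\tfrac18\right[$, are strictly increasing with the claimed endpoint limits at $\alpha=0$ and $\alpha=\tfrac18$.

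For $r_{max}^{Sch}$, rationalising gives $r_{max}^{Sch}(\ell)=\dfrac{6M}{\,1+\sqrt{1-12M^{2}/\ell}\,}$; on $\left]12M^{2},\infty\right[$ the quantity $\sqrt{1-12M^{2}/\ell}$ increases strictly from $0$ to $1$, so $r_{max}^{Sch}$ decreases strictly from $6M$ to $3M$. For $r_{min}^{Sch}$ I would write $r_{min}^{Sch}(\ell)=\dfrac{\ell}{2M}+\dfrac{1}{2M}\sqrt{\ell(\ell-12M^{2})}$, a sum of two functions strictly increasing on $\left]12M^{2},\infty\right[$ (for the second, $\tfrac{d}{d\ell}\big(\ell(\ell-12M^{2})\big)=2\ell-12M^{2}>0$ there), with limits $6M$ as $\ell\to12M^{2}$ and $+\infty$ as $\ell\to\infty$. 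Alternatively, Vieta's formulas for $Mr^{2}-\ell r+3M\ell=0$, whose roots are $r_{max}^{Sch}$ and $r_{min}^{Sch}$, give $r_{min}^{Sch}=\ell/M-r_{max}^{Sch}$, which reduces the second claim to the first.

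For $\ell_{lb}$ and $\ell_{ub}$, write $\ell_{lb}(\alpha)=12M^{2}/D_{lb}(\alpha)$ and $\ell_{ub}(\alpha)=12M^{2}/D_{ub}(\alpha)$ with $D_{lb}(\alpha)=1-4\alpha-8\alpha^{2}+8\alpha\sqrt{\alpha^{2}+\alpha}$ and $D_{ub}(\alpha)=1-4\alpha-8\alpha^{2}-8\alpha\sqrt{\alpha^{2}+\alpha}$. A direct evaluation gives $D_{lb}(0)=D_{ub}(0)=1$, $D_{lb}(\tfrac18)=\tfrac34$ and $D_{ub}(\tfrac18)=0$, which, once monotonicity is known, yields the endpoint values $\ell_{lb}\colon 12M^{2}\to16M^{2}$ and $\ell_{ub}\colon 12M^{2}\to+\infty$; it remains to show that $D_{lb}$ and $D_{ub}$ are strictly decreasing and positive on $\left]0,\tfrac18\right[$. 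Differentiating, $D_{ub}'(\alpha)=-4-16\alpha-\dfrac{4\alpha(4\alpha+3)}{\sqrt{\alpha^{2}+\alpha}}<0$ for $\alpha>0$, and since $D_{ub}$ drops from $1$ to $0$ it stays positive on $\left[0,\tfrac18\right[$. For the ``$+$'' branch, $D_{lb}'(\alpha)=-4(1+4\alpha)+\dfrac{4\alpha(4\alpha+3)}{\sqrt{\alpha^{2}+\alpha}}$, so $D_{lb}'(\alpha)<0$ is equivalent to $\alpha(4\alpha+3)<(1+4\alpha)\sqrt{\alpha^{2}+\alpha}$; both sides being positive, I would square and divide by $\alpha$, reducing this to $(1+4\alpha)^{2}(\alpha+1)-\alpha(4\alpha+3)^{2}>0$, which holds because the left-hand side is identically equal to $1$. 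Positivity of $D_{lb}$ then follows since it decreases from $1$ to $\tfrac34$.

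The main (and essentially only non-routine) point is this inequality for $D_{lb}$: the sign of the $\sqrt{\alpha^{2}+\alpha}$ contribution is not obvious for the ``$+$'' branch, and one has to go through the square-and-simplify step where the collapse $(1+4\alpha)^{2}(\alpha+1)-\alpha(4\alpha+3)^{2}\equiv1$ does the work; everything else is bookkeeping. Finally, for the displayed chain of inequalities, take $(E,\ell)$ with $E_{1}\le E\le E_{2}$ and $\ell$ lying between $\ell_{lb}(E_{2})$ and $\ell_{ub}(E_{1})$, the range relevant to \eqref{bounds:}; monotonicity of $\ell_{lb}$ gives $\ell_{lb}(E)\le\ell_{lb}(E_{2})$, monotonicity of $\ell_{ub}$ gives $\ell_{ub}(E_{1})\le\ell_{ub}(E)$, and $\ell_{lb}(E_{2})<\ell<\ell_{ub}(E_{1})$ is precisely \eqref{bounds:}; chaining these establishes the claimed inequalities and hence $B_{bound}\subset\Abound$.
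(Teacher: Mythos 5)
Your proposal is correct and follows the same route as the paper, which simply declares the proof ``straightforward in view of \eqref{r::max}, \eqref{r::min}, \eqref{l::lb} and \eqref{l::ub}''; you have filled in the elementary computations the paper omits, and the key identity $(1+4\alpha)^{2}(\alpha+1)-\alpha(4\alpha+3)^{2}=1$ that settles the sign of $D_{lb}'$ checks out. One small remark: the interval in the last assertion of the lemma should read $\left]\ell_{lb}(E_2),\ell_{ub}(E_1)\right[$ (as written it would be empty by the very monotonicity being proved), and you have correctly interpreted it that way via \eqref{bounds:}.
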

\begin{proof}
The proof is straightforward in view of \eqref{r::max} , \eqref{r::min}, \eqref{l::lb} and \eqref{l::ub}.
\end{proof}
Hence,  by the above lemma, $\displaystyle [E_1, E_2]\times\left]\ell_{lb}(E_2), \ell_{lb}(E_1) \right[\subset\Abound$ and in particular, $B_{bound}\subset\subset \Abound$.
\\ Finally, we note that all trapped geodesics lie in the region $\left]4M, \infty\right[$:
 \begin{lemma}
\label{monotonicity}
Let $\displaystyle (E, \ell)\in B_{bound}$. Let $\displaystyle r_i^{Sch}(E, \ell)\footnote{We note that the dependence of $r_i$ in $(E, \ell)$ is smooth.}, \; i\in\left\{0, 1, 2\right\}$ be the three roots of the equation 
\begin{equation}
\label{eq:lemma2}
e^{2\mu^{Sch}(r)}\left(1 + \frac{\ell}{r^2} \right) = E^2. 
\end{equation}
Then, 
\begin{itemize}
\item $\displaystyle r_0^{Sch}(E, \cdot)$ decreases monotonically on $\left]\ell_1, \ell_2\right[$ and  $\displaystyle r_0^{Sch}(\cdot, \ell)$ increases monotonically  on $\left] E_1, E_2 \right[$. 
\item $\displaystyle r_1^{Sch}(E, \cdot)$ increases monotonically on $\left]\ell_1, \ell_2\right[$ and $\displaystyle r_1^{Sch}(\cdot, \ell)$ decreases monotonically  on $\left] E_1, E_2 \right[$. 
\item $\displaystyle r_2^{Sch}(E, \cdot)$ decreases monotonically on $\left]\ell_1, \ell_2\right[$ and $\displaystyle r_2^{Sch}(\cdot, \ell)$ increases monotonically  on $\left] E_1, E_2 \right[$.
 \end{itemize}
 Moreover, 
 \begin{equation*}
 \forall(E, \ell)\in A_{bound}\quad:\quad r^{Sch}_1(E, \ell) > 4M.   
 \end{equation*}
\end{lemma}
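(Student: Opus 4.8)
The plan is to analyze the monotonicity of the three roots and the lower bound $r_1^{Sch}(E,\ell) > 4M$ by exploiting the explicit form of the effective potential $\Esch(r) = (1-2M/r)(1+\ell/r^2)$ and the fact that the roots of $\Esch(r) = E^2$ move continuously and smoothly with the parameters (the smoothness following from the implicit function theorem, since at a \emph{simple} root $\partial_r \Esch \neq 0$, and for $(E,\ell) \in B_{bound}$ all three roots $r_0 < r_1 < r_2$ are simple by Proposition \ref{Propo1}, case $2.c.ii.B$). First I would record the sign pattern of $\Esch - E^2$: it is negative on $(2M, r_0)$, positive on $(r_0, r_1)$, negative on $(r_1, r_2)$, and positive on $(r_2, \infty)$; equivalently $\Esch' (r_0) > 0$, $\Esch'(r_1) < 0$, $\Esch'(r_2) > 0$. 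This is the key qualitative input that fixes all the signs below.

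For the monotonicity in $\ell$, I would differentiate the defining relation $\Esch[r_i(E,\ell)](r_i) = E^2$ implicitly with respect to $\ell$ at fixed $E$. Writing $F(r,\ell) := (1-2M/r)(1+\ell/r^2)$, one has $\partial_\ell F = (1-2M/r)/r^2 > 0$ on $(2M,\infty)$, so
\begin{equation*}
\frac{\partial r_i}{\partial \ell} = -\frac{\partial_\ell F(r_i,\ell)}{\partial_r F(r_i,\ell)} = -\frac{(1-2M/r_i)/r_i^2}{\Esch'(r_i)}.
\end{equation*}
Since $\partial_\ell F > 0$ always, the sign of $\partial_\ell r_i$ is the opposite of the sign of $\Esch'(r_i)$: hence $r_0^{Sch}(E,\cdot)$ and $r_2^{Sch}(E,\cdot)$ decrease while $r_1^{Sch}(E,\cdot)$ increases, on the relevant $\ell$-interval. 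For the monotonicity in $E$ at fixed $\ell$, I would differentiate $\Esch(r_i) = E^2$ to get $\Esch'(r_i)\, \partial_E r_i = 2E > 0$, so the sign of $\partial_E r_i$ equals the sign of $\Esch'(r_i)$: $r_0^{Sch}(\cdot,\ell)$ and $r_2^{Sch}(\cdot,\ell)$ increase while $r_1^{Sch}(\cdot,\ell)$ decreases. Throughout, I should note that these signs are constant on the connected parameter intervals in question precisely because no two roots collide there (that would require passing through $\ell = \ell_{lb}(E)$ or $\ell = \ell_{ub}(E)$, excluded by \eqref{bounds:} and Lemma \ref{monotonicity:1}), so $\Esch'(r_i)$ never vanishes and keeps its sign.

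For the final claim $r_1^{Sch}(E,\ell) > 4M$ for all $(E,\ell) \in \Abound$, the cleanest route is: at a root $r$ of $\Esch(r) = E^2$ with $r = r_1$ we have $\Esch'(r_1) \le 0$ (it is $< 0$ when $r_1$ is simple, and $= 0$ only in the degenerate circular cases $r_1 = r_{min}^{Sch}$ or $r_1 = r_{max}^{Sch}$). A direct computation gives
\begin{equation*}
\Esch'(r) = \frac{2}{r^4}\left( M r^2 - \ell r + 3M\ell \right) \cdot \frac{1}{1} = \frac{2}{r^4}\left( M r^2 - \ell r + 3M\ell\right),
\end{equation*}
so $\Esch'(r_1) \le 0$ forces $M r_1^2 - \ell r_1 + 3M\ell \le 0$, i.e. $\ell \ge \frac{M r_1^2}{r_1 - 3M}$ (note $r_1 > 3M$ automatically, as the quadratic $Mr^2 - \ell r + 3M\ell$ is positive for $r \le 3M$). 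On the other hand, $r_1$ being a root with $\Esch(r_1) = E^2 < 1$ gives $(1 - 2M/r_1)(1 + \ell/r_1^2) < 1$, which rearranges to $\ell < \frac{2M r_1^2}{r_1 - 2M}$. Combining the two inequalities, $\frac{M r_1^2}{r_1 - 3M} < \frac{2M r_1^2}{r_1 - 2M}$, which simplifies to $r_1 - 2M < 2(r_1 - 3M)$, i.e. $r_1 > 4M$. The main obstacle — really the only subtle point — is handling the boundary/degenerate cases carefully: I should state the lemma's monotonicity conclusions on the \emph{open} intervals and the $r_1 > 4M$ bound using the non-strict inequality $\Esch'(r_1) \le 0$ so that it also covers the circular photon-sphere-adjacent endpoints, checking separately that equality $\Esch'(r_1) = 0$ together with $E < 1$ still yields the strict inequality $r_1 > 4M$ (it does, since the chain above only used $E^2 < 1$ strictly and $\Esch'(r_1) \le 0$). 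Everything else is the routine implicit-differentiation bookkeeping sketched above, organized around the single observation that the sign of $\Esch'$ at each root is dictated by Proposition \ref{Propo1}.
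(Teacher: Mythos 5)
Your proposal is correct. The monotonicity part is essentially the paper's own argument: implicit differentiation of $\Esch(r_i^{Sch}(E,\ell)) = E^2$, with the signs read off from the sign of $(\Esch)'$ at each simple root (the paper's displayed formula for $\partial r_i^{Sch}/\partial E$ carries a spurious minus sign, but your version $\partial_E r_i = 2E/(\Esch)'(r_i)$ is the correct one and is what makes the stated monotonicities come out right). Where you genuinely diverge is the bound $r_1^{Sch}(E,\ell) > 4M$. The paper proves it by chaining monotonicity through the degenerate boundary: $r_1^{Sch}(E,\ell) > r_1^{Sch}(E,\ell_{lb}(E)) = r_{max}^{Sch}(\ell_{lb}(E)) > r_{max}^{Sch}(\ell_{lb}(1)) = r_{max}^{Sch}(16M^2) = 4M$, which requires the explicit formulas \eqref{r::max} and \eqref{l::lb}. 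You instead combine the two pointwise conditions at $r_1$, namely $(\Esch)'(r_1) \le 0$ (giving $\ell \ge Mr_1^2/(r_1-3M)$ and $r_1 > 3M$) and $\Esch(r_1) = E^2 < 1$ (giving $\ell < 2Mr_1^2/(r_1-2M)$), whose compatibility forces $r_1 > 4M$ directly. Your route is more self-contained — it never invokes $\ell_{lb}$, $r_{max}^{Sch}$, or Lemma \ref{monotonicity:1} — and it makes transparent that the bound degenerates exactly as $E \to 1$; the paper's route is shorter once those explicit formulas are already on the table, as they are from Proposition \ref{Propo1}. Both arguments are sound, and your care with the non-strict inequality $(\Esch)'(r_1)\le 0$ at the circular endpoints is a harmless extra generality (on $\Abound$ the roots are simple, so the strict inequality already holds).
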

\begin{proof}
First we note that the equation \eqref{eq:lemma2} is cubic in $\displaystyle \frac{1}{r}$.  Its roots can be obtained analytically by Cardan's formula and in particular, depend smoothly on $(E, \ell)$. Besides, the roots are simple. Otherwise, the orbit would be circular. Therefore,
\begin{equation*}
(\Esch)'(r_i^{Sch}(E, \ell)) \neq 0. 
\end{equation*}   
$r^{Sch}_i$ verify
\begin{equation*}
E^2 = \Esch(r_i^{Sch}(E, \ell)). 
\end{equation*}
We derive the latter equation with respect to $E$ and $\ell$ to obtain 
\begin{equation*}
\displaystyle
\frac{\partial r_i^{Sch}}{\partial\ell}(E, \ell) = -\frac{\frac{\partial\Esch}{\partial\ell}(r_i^{Sch}(E, \ell))}{(\Esch)'(r_i^{Sch}(E, \ell))}, \quad\text{and}\quad  \frac{\partial r_i^{Sch}}{\partial E}(E, \ell) = -\frac{2E}{(\Esch)'(r_i^{Sch}(E, \ell))}
\end{equation*}
We use the monotonicity properties of $\Esch$ on $]2M, \infty[$ to determine the sign of the above derivatives. For the second point, we use the monotonicity properties of $r_1^{Sch}$, $r_{max}^{Sch}$ and $\ell_{ub}$ to obtain
\begin{equation*}
\forall (E, \ell)\in\Abound\;:\; r_1^{Sch}(E, \ell) > r_1^{Sch}(E, \ell_{lb}(E))  \quad\text{and}\quad r_{max}^{Sch}(\ell_{lb}(E)) >  r_{max}^{Sch}(\ell_{lb}(1)). 
\end{equation*}
Besides, by the definition on $\ell_{ub}$ (the value of the angular momentum such that $E^{max}(\ell) = E^{Sch}_\ell(r^{Sch}_{max}(\ell)) = E^2$), we have 
\begin{equation*}
r_1^{Sch}(E, \ell_{lb}(E)) = r_{max}^{Sch}(\ell_{lb}(E)). 
\end{equation*} 
Therefore, 
\begin{equation*}
r_1^{Sch}(E, \ell)  > r_{max}^{Sch}(\ell_{lb}(1)).
\end{equation*}
To conclude, it suffices to note that  $\ell_{lb}(1) = 16M^2$ so that, when using \eqref{r::max}, we obtain 
\begin{equation*}
r_{max}^{Sch}(\ell_{lb}(1)) = 4M. 
\end{equation*}
This ends the proof. 
\end{proof}
\begin{remark}
The choice of an open set for the range of $\ell$ allow us eliminate the critical values leading to circular orbits which correspond to double roots of the equation \eqref{eq::motion:}:  the points $(E_1, \ell_{ub}(E_1))$ and $(E_2, \ell_{lb}(E_2))$. 
\end{remark}
\subsection{Reduced Einstein equations}
After specifying the ansatz for the metric and for the distribution function, we compute the energy-momentum tensor associated to \eqref{metric::ansatz} and the distribution function \eqref{f::ansatz}. In order to handle the mass shell condition, we introduce the new momentum variables
\begin{equation}
\label{new::momentum}
p^0:= -e^{\mu(r)}v^t,\quad p^1 := e^{\lambda(r)}v^r,\quad p^2 := rv^\theta,\quad p^3 := r\sin\theta v^\phi,
\end{equation}
so that the associated frame 
\begin{equation}
e_0 := e^{-\mu(r)} \frac{\partial }{\partial t} ,\quad e_1:= e^{-\lambda(r)} \frac{\partial }{\partial r},\quad e_2:= \frac{1}{r} \frac{\partial }{\partial \theta}, \quad e_3:=\frac{1}{r\sin\theta} \frac{\partial }{\partial \phi},
\end{equation}
forms an orthonormal frame of $T_x\spacetime$.
In terms of the new momentum coordinates, the mass shell condition becomes 
\begin{equation*}
-(p^0)^2 + (p^1)^2 + (p^2)^2 + (p^3)^2 = -1 \quad\text{i.e}\quad p^0 = \sqrt{1 + |p|^2}, 
\end{equation*}
where $p = (p^1, p^2, p^3)\in\mathbb R^3$ and $|\cdot|$ is the Euclidean norm on $\mathbb R^3$. It is convient to introduce an angle variable $\chi\in[0, 2\pi[$ such that 
\begin{align*}
v^\theta = \frac{\sqrt{\ell}\cos\chi}{r^2}, \quad\quad  v^\phi = \frac{\sqrt{\ell}\sin\chi}{r^2\sin\theta},
\end{align*}
so that 
\begin{equation*}
r^4(v^\theta)^2 + r^4\sin^2\theta(v^\phi)^2 = \ell.
\end{equation*}
We also introduce the effective potential in the metric \eqref{metric::ansatz}
\begin{equation}
\label{effective::potential}
E_{\ell}(r) := e^{2\mu(r)}\left( 1 + \frac{\ell}{r^2}\right).
\end{equation}
In local coordinates, the energy momentum tensor reads 
\begin{align*}
\T\alpha\beta(x) &= \int_{\Gamma_{x}}\;v_\alpha v_\beta f(x^\mu, v^a)d_{x}\text{vol}(v^a),  \\
&= \int_{\Gamma_{x}}\;v_\alpha v_\beta \Phi(E(x^\mu, v^a), \ell(x^\mu, v^a))\Psi_\eta(r, (E(x^\mu, v^a), \ell(x^\mu, v^a)), \mu)d_{x}\text{vol}(v),
\end{align*}
where  $\displaystyle d_{x}\text{vol}(v)$ is given by \eqref{vol::form}, $E(x^\mu, v^a)$ and $\ell(x^\mu, v^a)$ are respectively given by \eqref{energy} and \eqref{ang::momentum}. Here, $x^\mu = (t, r, \theta, \phi)$ and $v^a = (v^r, v^\theta, v^\phi)$. We make a first change of variables $(v^r, v^\theta, v^\phi)\to (p^1, p^2, p^3)$ to get
\begin{align*}
\T\alpha\beta(x) &= \int_{\mathbb R^3}\;v_\alpha(p) v_\beta(p) \Phi\left(E(r, |p|), \ell(r, p)\right)\Psi_\eta(r, (E(r, |p|), \ell(r, p)), \mu)\;\frac{d^3p}{\sqrt{1+|p|^2}} \\
&= 2\int_{[0, +\infty[\times\mathbb R^2}\;v_\alpha(p) v_\beta(p) \Phi\left(E(r, |p|), \ell(r, p)\right)\Psi_\eta(r, (E(r, |p|), \ell(r, p)), \mu)\;\frac{d^3p}{\sqrt{1+|p|^2}}. 
\end{align*} 
We perform a second change of variables $(p^1, p^2, p^3)\mapsto (E(r, |p|),\ell(r, p), \chi)$. We compute the $p^i$s in terms of the new variables:
\begin{equation*}
(p^1)^2 = E^2(r, |p|)e^{-2\mu(r)} - 1 - \frac{\ell(r, p)}{r^2}, \quad\quad p^2 = \frac{\sqrt{\ell(r, p)}\cos\chi}{r}, \quad\quad p^3 = \frac{\sqrt{\ell(r, p)}\sin\chi}{r}.
\end{equation*}
Now it is easy to see that the domain of $\displaystyle (E, \ell, \chi)$ is given by $\displaystyle D_r\times\left[0,2\pi\right[$ where 
\begin{equation}
\label{D::r}
D_r := \left\{ (E, \ell)\in]0, \infty[\times[0, \infty[\;:\; E_\ell(r) \leq E^2  \right\}. 
\end{equation}
By straightforward computations of the Jacobian, we get
\begin{equation*}
\displaystyle \frac{d^3p}{\sqrt{1+|p|^2}} = \frac{1}{2r^2\sqrt{E^2-E_\ell(r)}}\;dEd\ell d\chi. 
\end{equation*} 
Therefore, the energy momentum tensor becomes 
\begin{equation*}
\T\alpha\beta(x) = \int_{D_r}\int_{[0,2\pi[}u_\alpha u_\beta \Phi(E, \ell)\Psi_\eta(r, (E, \ell, \mu)\frac{1}{r^2\sqrt{E^2 -  E_{\ell}(r)}}\;dE d\ell d\chi,
\end{equation*}
where 
\begin{equation*}
u_t^2 = E^2,\quad u_r^2 = e^{2\lambda(r)-2\mu(r)}\left(E^2 - E_\ell(r)\right), \quad, u_\theta^2 = \ell\cos^2\chi, \quad u_\phi^2 = \ell\sin^2\theta\sin^2\chi. 
\end{equation*}
Hence, the non-vanishing energy-momentum tensor components are given by
\begin{align*}
\displaystyle \T{t}{t}(r) &= \frac{2\pi}{r^2}\int_{D_r}\;E^2\Psi_\eta(r, (E, \ell, \mu)\frac{\Phi(E, \ell)}{\sqrt{E^2 - E_{\ell}(r)}}\;dEd\ell,    \\
\displaystyle  \T{r}{r}(r) &= \frac{2\pi}{r^2}\int_{D_r}\;e^{2(\lambda(r)-\mu(r))}\Psi_\eta(r, (E, \ell, \mu)\Phi(E, \ell)\sqrt{E^2 - E_{\ell}(r)}\;dEd\ell, \\
\displaystyle  \T{\theta}{\theta}(r) &= \frac{\pi}{r^2}\int_{D_r}\; \Psi_\eta(r, (E, \ell, \mu)\Phi(E, \ell)\frac{\ell}{\sqrt{E^2 - E_{\ell}(r)}}\;dEd\ell, \\
\displaystyle  \T{\phi}{\phi}(r, \theta) &= \sin^2\theta\T{\theta}{\theta}(r).
\end{align*}
It remains to compute the Einstein tensor
\begin{equation*}
\Ein{\alpha}{\beta} = \Ric{\alpha}{\beta} - \frac{1}{2}\g{\alpha}{\beta}R(g)
\end{equation*}
 with respect to the metric \eqref{metric::ansatz}. Straightforward computations lead to 
\begin{align*}
\Ein{t}{t} &= \frac{e^{2\mu(r)}}{r^2}\left( e^{-2\lambda(r)}\left(2r\lambda'(r) - 1 \right) + 1 \right) , \\
\Ein{r}{r} &= \frac{e^{2\lambda(r)}}{r^2}\left( e^{-2\lambda(r)}\left(2r\mu'(r) + 1 \right) - 1 \right) , \\
\Ein{\theta}{\theta} &= re^{-2\lambda(r)}\left( r\mu'^2(r) - \lambda'(r) + \mu'(r) - r\mu'(r)\lambda'(r) + r\mu''(r) \right),\\
\Ein{\phi}{\phi} &=  \sin^2\theta \Ein{\theta}{\theta},
\end{align*}
while the remaining components vanish. Therefore, the Einstein-Vlasov system is reduced to the following system of differential equations with respect the radial variable $r$:
\begin{align}
\label{Gtt}
e^{-2\lambda(r)}\left(2r\lambda'(r) - 1 \right) + 1 &= 8\pi^2e^{-2\mu(r)}\int_{D_r}\;E^2\Psi_\eta(r, (E, \ell, \mu)\Phi(E, \ell)\frac{\Phi(E, \ell))}{\sqrt{E^2 - E_{\ell}(r)}}\;dE d\ell ,\\
\label{Grr}
e^{-2\lambda(r)}\left(2r\mu'(r) + 1 \right) - 1 &= 8\pi^2e^{-2\mu(r)}\int_{D_r}\;\Phi(E, \ell)\Psi_\eta(r, (E, \ell, \mu)\sqrt{E^2 - E_{\ell}(r))}\;dE d\ell, \\
\left( r\mu'^2(r) - \lambda'(r) + \mu'(r) - r\mu'(r)\lambda'(r) + r\mu''(r) \right) &= \frac{4\pi^2}{r^3}e^{2\lambda(r)}\int_{D_r}\;  \Phi(E, \ell)\Psi_\eta(r, (E, \ell, \mu)\frac{\ell}{\sqrt{E^2 - E_{\ell}(r))}}\;dE d\ell.
\end{align}
We perform a last change of variable $\displaystyle E = e^{\mu(r)}\varepsilon$ and we set 
\begin{align}
\label{G::Phi}
G_\Phi(r, \mu) &:= \frac{2\pi}{r^2}\int_1^\infty\;\int_0^{r^2\left(\varepsilon^2 - 1\right)}\Phi(e^{\mu(r)}\varepsilon, \ell)\Psi_\eta(r, (e^{\mu(r)}\varepsilon, \ell, \mu))\frac{\varepsilon^2}{\sqrt{\varepsilon^2 - 1 - \frac{\ell}{r^2}}}\;d\ell d\varepsilon   , \\
\label{H::Phi}
H_\Phi(r, \mu) &:= \frac{2\pi}{r^2}\int_1^\infty\;\int_0^{r^2\left(\varepsilon^2 - 1\right)}\Phi(e^{\mu(r)}\varepsilon, \ell)\Psi_\eta(r, (e^{\mu(r)}\varepsilon, \ell, \mu))\sqrt{\varepsilon^2 - 1 - \frac{\ell}{r^2}}\;d\ell d\varepsilon.
\end{align}
The equations \eqref{Gtt} and \eqref{Grr} become
\begin{align}
\label{REV1}
e^{-2\lambda(r)}\left(2r\lambda'(r) - 1 \right) + 1 &= 8\pi r^2G_\Phi(r, \mu) ,\\
\label{REV2}
e^{-2\lambda(r)}\left(2r\mu'(r) + 1 \right) - 1 &= 8\pi r^2H_\Phi(r, \mu) . 
\end{align}
We call the latter nonlinear system for $\mu$ and $\lambda$ the \textit{reduced Einstein-Vlasov system}. 
\\We note that when inserting the ansatz of $f$ \eqref{f::ansatz} in the definition of the energy momentum tensor, the matter terms $G_\Phi$ and $H_\Phi$ become functionals of the yet unknown metric function $\mu$ and of the radial position $r$. Besides, from the reduced Einstein-Vlasov system \eqref{REV1}-\eqref{REV2}, we will show that one can express $\lambda$ in terms of the unknown metric $\mu$ so that we are left with the problem of solving only for $\mu$. Therefore, we will define the solution operator used in the implicit function theorem for $\mu$ only . For this, let $\rho>0$ and $R>0$ be  such that: 
\begin{equation*}
R> \max_{(E, \ell)\in B_{bound}} r_2^{Sch}(E, \ell),
\end{equation*}
\begin{equation*}
0< 2M + \rho< \min_{(E, \ell)\in B_{bound}} r_0^{Sch}(E, \ell)
\end{equation*}
and set $\displaystyle I:= ]2M+\rho, R[$. We will solve \eqref{REV1}-\eqref{REV2} on $I$. Then, we extend the solution to $]2M, \infty[$ by the Schwarzschild solution. In this context, we state the following lemma:
\begin{lemma}
\label{aux}
Let $\displaystyle \rho>0$ and $\displaystyle R> 2M+\rho$. Let $(\lambda, \mu)$ be a solution of the reduced Einstein Vlasov system such that $f$ is on the form  \eqref{f::ansatz}, with boundary conditions 
\begin{equation}
\label{lambda::0}
\lambda(\rho+2M)  = -\frac{1}{2}\log\left(1 - \frac{2M}{2M+\rho}\right) := \lambda_0,  
\end{equation}
and 
\begin{equation}
\label{mu::0}
\mu(\rho+2M)= \frac{1}{2}\log\left(1 - \frac{2M}{2M+\rho}\right) := \mu_0. 
\end{equation}
Moreover, we assume that 
\begin{equation}
\label{cond::m}
\displaystyle \forall r\in I\quad,\quad  2m(\mu)(r)<r.
\end{equation}
where
\begin{equation}
\label{mass}
m(\mu)(r) := M + 4\pi\int_{2M+\rho}^{r}s^2G_\Phi(s, \mu)\,ds. 
\end{equation}
Then, we have 
\begin{equation}
\label{lambda:delta}
e^{-2\lambda(r)} = 1 - \frac{2m(\mu)(r)}{r}.
\end{equation}
Besides,
\begin{equation}
\label{mu:delta}
\mu(r) = \mu_0 + \int_{2M+\rho}^{r}\frac{1}{1 - \frac{2m(\mu)(s)}{s}}\left(4\pi s H_{\Phi}(s, \mu) + \frac{1}{s^2}m(\mu)(s)\right)\,ds.
\end{equation} 
\end{lemma}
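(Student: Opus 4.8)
The plan is to integrate the two equations \eqref{REV1}--\eqref{REV2} of the reduced Einstein-Vlasov system one after the other, exploiting the fact that the left-hand side of \eqref{REV1} is an exact derivative. First I would note that
\[
\frac{d}{dr}\left(r e^{-2\lambda(r)}\right) = e^{-2\lambda(r)}\bigl(1 - 2r\lambda'(r)\bigr),
\]
so \eqref{REV1} is equivalent to $\frac{d}{dr}\bigl(r e^{-2\lambda(r)}\bigr) = 1 - 8\pi r^2 G_\Phi(r,\mu)$. Integrating this identity on $[2M+\rho, r]$ and using the boundary condition \eqref{lambda::0}, which gives $(2M+\rho)e^{-2\lambda_0} = (2M+\rho)\bigl(1 - \tfrac{2M}{2M+\rho}\bigr) = \rho$, one obtains
\[
r e^{-2\lambda(r)} = r - 2M - 8\pi\int_{2M+\rho}^{r} s^2 G_\Phi(s,\mu)\,ds = r - 2m(\mu)(r),
\]
where $m(\mu)$ is the function \eqref{mass}. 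This is exactly \eqref{lambda:delta}, and the hypothesis \eqref{cond::m} guarantees that the right-hand side stays positive on $I$, so that $\lambda$ is well defined and finite there.

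Next I would substitute $e^{-2\lambda(r)} = 1 - \tfrac{2m(\mu)(r)}{r}$ into \eqref{REV2}. Solving the resulting algebraic relation for $\mu'(r)$ and using $1 - e^{-2\lambda(r)} = \tfrac{2m(\mu)(r)}{r}$ yields
\[
\mu'(r) = \frac{1}{1 - \frac{2m(\mu)(r)}{r}}\left(\frac{m(\mu)(r)}{r^2} + 4\pi r H_\Phi(r,\mu)\right),
\]
the denominator being nonzero again by \eqref{cond::m}. Integrating this on $[2M+\rho, r]$ with the boundary value $\mu(2M+\rho) = \mu_0$ from \eqref{mu::0} gives precisely \eqref{mu:delta}.

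The argument is essentially algebraic once one recognises the exact-derivative structure; the only points needing a little care are (i) the continuity, indeed local integrability, in $r$ of $G_\Phi(\cdot,\mu)$ and $H_\Phi(\cdot,\mu)$, which is what allows the use of the fundamental theorem of calculus, and which follows from $\Phi$ being $C^2$ with compact support in $B_{bound}$ together with the fact that the weights $\varepsilon^2/\sqrt{\varepsilon^2 - 1 - \ell/r^2}$ and $\sqrt{\varepsilon^2 - 1 - \ell/r^2}$ in \eqref{G::Phi}--\eqref{H::Phi} have at worst an integrable singularity at the boundary of the domain $D_r$ of \eqref{D::r}; and (ii) the bookkeeping of the constants of integration through the prescribed data at $r = 2M+\rho$. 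I do not expect any genuine obstacle in this lemma: it merely records the integrated form of the reduced system, the real work — setting up and solving the fixed-point problem for $\mu$ built out of \eqref{mu:delta} — being postponed to the subsequent application of the implicit function theorem.
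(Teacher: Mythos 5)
Your proposal is correct and follows essentially the same route as the paper: integrate \eqref{REV1} using the boundary value $(2M+\rho)e^{-2\lambda_0}=\rho$ to get \eqref{lambda:delta} (your exact-derivative observation $\frac{d}{dr}(re^{-2\lambda})=e^{-2\lambda}(1-2r\lambda')$ is just a cleaner packaging of the paper's integration by parts), then substitute into \eqref{REV2}, solve for $\mu'$ and integrate. No gaps.
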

\begin{proof}
We integrate \eqref{REV1} between $2M+\rho$ and some $r\in I$ to obtain 
\begin{equation*}
\int_{2M+\rho}^r\;s(2\lambda'e^{-2\lambda(s)})\,ds - \int_{2M+\rho}^r\;e^{-2\lambda(s)}\,ds + (r - (2M+\rho)) = 8\pi\int_{2M+\rho}^r\;s^2G_\Phi(s, \mu)\,ds.
 \end{equation*}
We integrate by parts the first term of the left hand side to have 
 \begin{equation*}
e^{-2\lambda(r)} = 1 + \frac{2M+\rho}{r}\left( e^{-2\lambda(2M+\rho)} - 1\right) - \frac{8\pi}{r}\int_{2M+\rho}^r\;s^2G_\Phi(s, \mu)\,ds ,
 \end{equation*}
Since
\begin{equation*}
 e^{-2\lambda(2M+\rho)} = 1 - \frac{2M}{2M+\rho},
\end{equation*}
we have
\begin{equation}
\label{lambda}
\forall r\in I\,, \quad e^{-2\lambda(r)} = 1 - \frac{2m(\mu)(r)}{r}
\end{equation}
where 
\begin{equation}
\label{mass}
\forall r\in I\;,\; m(\mu)(r) := M + 4\pi\int_{2M+\rho}^{r}s^2G_\Phi(s, \mu)\,ds. 
\end{equation}
Thus, the solution of \eqref{REV1} is given by \eqref{lambda}. In order to find $\mu$, we use \eqref{lambda} so that we can write \eqref{REV2} on the form 
\begin{equation}
\label{mu::prime}
\mu'(r) =  e^{2\lambda(r)}\left( 4\pi r H_{\Phi}(r, \mu) + \frac{1}{r^2}m(\mu)(r) \right). 
\end{equation}
Now, we integrate the latter equation between $\displaystyle 2M+\rho$ and $\displaystyle r\in I$ to obtain 
\begin{equation}
\label{mu}
\mu(r) = \mu_0 + \int_{2M+\rho}^{r}\frac{1}{1 - \frac{2m(\mu)(s)}{s}}\left(4\pi s H_{\Phi}(r, \mu) + \frac{1}{s^2}m(\mu)(s)\right)\,ds.
\end{equation}
\end{proof}

\subsection{Rein's work on solutions with a Schwarzschild-like black hole}
\label{Rein:work}
In order to contrast it with our approach, we give an overview of Rein's proof \cite{rein1994static} concerning the construction of static spherically symmetric solutions to the Einstein-Vlasov system with a Schwarzschild-like black hole such that the spacetime has a finite mass and the matter field has a finite radius. Firstly, one starts with an ansatz of the form 
\begin{equation}
\label{ansatz::rein}
f(x, p) = \Phi(E, \ell) = \phi(E)(\ell - \ell_0)_+^{l}, \quad E > 0,\quad \ell\geq0,
\end{equation}
where $\ell_0\geq 0,\; l>1/2$ and $\phi\in L^\infty(]0, \infty[)$ is positive with $\phi(E) = 0$, $\forall E>E_0$ for some $E_0 > 0$.
\\ In this context, we note that  a necessary condition to obtain a steady state for the Einstein-Vlasov system with finite total mass is that $\Phi$ must vanish for energy values larger than some cut-off energy $E_0\geq 0$ \cite{rein2000compact}. The same result was proven in \cite{batt1986stationary} for the Vlasov-Poisson system. This motivates the choice of the cut-off function $\phi$.
\\ Under the above ansatz with a metric on the form \eqref{metric::ansatz}, the Einstein Vlasov system becomes
\begin{align}
\label{EV1:::}
e^{-2\lambda(r)}\left(2r\lambda'(r) - 1 \right) + 1 &= 8\pi r^2\overline G_\Phi(r,\mu(r)) ,\\
\label{EV2:::}
e^{-2\lambda(r)}\left(2r\mu'(r) + 1 \right) - 1 &= 8\pi r^2\overline H_\Phi(r,\mu(r)) , \\
e^{-2\lambda(r)}\left( \mu'' + (\mu' + \frac{1}{r})(\mu' - \lambda') \right) &= 8\pi \overline K_\Phi(r,\mu(r))
\end{align}
where
\begin{align}
\label{G::Phi::}
\overline G_{\Phi}(r, u) &= c_lr^{2l}e^{-(2l+4)u}g_\phi\left(e^u\sqrt{1+\frac{\ell_0}{r^2}}  \right), \\
\label{H::Phi::}
\overline  H_{\Phi}(r, u) &= \frac{c_l}{2l + 3}r^{2l}e^{-(2l+4)u}h_\phi\left(e^u\sqrt{1+\frac{\ell_0}{r^2}}  \right), \\
\label{K::Phi}
\overline  K_{\Phi}(r, u) &= (l+1)H_\Phi(r, u) + \frac{c_l}{2}\ell_0r^{2l-2}e^{-(2l+2)u}k_\phi\left(e^u\sqrt{1+\frac{\ell_0}{r^2}}  \right), \\
\end{align}
where $g_\phi$, $h_\phi$ and $k_\phi$ are defined by 
\begin{align}
\label{g::Phi}
g_\phi(t) &:= \int_t^\infty\;\phi(\varepsilon)\varepsilon^2(\varepsilon^2 - t^2)^{l+\frac{1}{2}}d\varepsilon, \\
\label{h::Phi}
h_\phi(t) &:= \int_t^\infty\;\phi(\varepsilon)(\varepsilon^2 - t^2)^{l+\frac{3}{2}}d\varepsilon, \\
\label{k::Phi}
k_\phi(t) &:= \int_t^\infty\;\phi(\varepsilon)(\varepsilon^2 - t^2)^{l+\frac{1}{2}}d\varepsilon \\
\end{align}
and $c_l$ is defined by 
\begin{equation}
\label{c:l}
c_l := 2\pi\int_0^1\frac{s^l}{\sqrt{1-s}}ds.
\end{equation}
More precisely, we have 
\begin{Propo}[Rein, \cite{rein1994static}]
\label{glob::existence}
Let $\Phi$ satisfy the assumptions stated above. Then for every $r_0\geq 0$,  $\lambda_0\geq 0 $ and $\mu_0 \in \mathbb R$ with $\lambda_0 = 0$ if $r_0 = 0$, there exists a unique solution $\lambda, \mu\in C^1([r_0, \infty[)$ of the reduced Einstein equations \eqref{EV1:::}-\eqref{EV2:::}  with 
\begin{equation*}
\lambda(r_0) = \lambda_0, \quad\quad  \mu(r_0) = \mu_0. 
\end{equation*} 
\end{Propo}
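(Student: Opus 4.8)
The plan is to recast the reduced system \eqref{EV1:::}--\eqref{EV2:::} as a first-order ODE in the unknowns $(m,\mu)$, obtain a unique local solution by Cauchy--Lipschitz, and then close a continuation argument; the only real difficulty will be an a priori lower bound on $r-2m(r)$. First I would repeat the computation of Lemma \ref{aux} verbatim, with $G_\Phi, H_\Phi$ replaced by $\overline G_\Phi, \overline H_\Phi$: under $\lambda(r_0)=\lambda_0$, the system \eqref{EV1:::}--\eqref{EV2:::} is equivalent to $e^{-2\lambda(r)}=1-2m(r)/r$ with
\begin{equation*}
m(r) := \tfrac{r_0}{2}\bigl(1-e^{-2\lambda_0}\bigr) + 4\pi\int_{r_0}^r s^2\,\overline G_\Phi(s,\mu(s))\,ds,
\end{equation*}
together with the scalar equation
\begin{equation*}
\mu'(r) = \frac{m(r) + 4\pi r^3\,\overline H_\Phi(r,\mu(r))}{r\,\bigl(r-2m(r)\bigr)}, \qquad \mu(r_0)=\mu_0
\end{equation*}
(the $\theta\theta$-equation then follows from the contracted Bianchi identity together with $\nabla^\mu T_{\mu\nu}=0$, which holds because $f$ solves the Vlasov equation). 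From \eqref{G::Phi::}, \eqref{H::Phi::} and the definitions \eqref{g::Phi}, \eqref{h::Phi}, the maps $(r,u)\mapsto \overline G_\Phi(r,u),\,\overline H_\Phi(r,u)$ are continuous on $[0,\infty[\times\mathbb R$ — the prefactor $r^{2l}$ with $l>1/2$ and the boundedness of $g_\phi, h_\phi$ (since $\phi\in L^\infty$ is supported in $]0,E_0]$) take care of $r=0$ — and locally Lipschitz in the second slot, because $g_\phi, h_\phi\in C^1$ with bounded derivative precisely when $l>1/2$ (the derivative of $g_\phi$ carries a factor $(\varepsilon^2-t^2)^{l-1/2}$, integrable against $\phi$ only for $l\geq1/2$). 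Hence the right-hand side of the $\mu$-equation is locally Lipschitz in $(m,\mu)$ on the open set $\{\,r>0,\ r>2m\,\}$, and Picard--Lindelöf yields a unique $C^1$ solution on a maximal interval $[r_0,R^*[$ with $r-2m(r)>0$ there. The case $r_0=0$ needs a short separate remark: then $\lambda_0=0$, so $m(0)=0$, $m(r)=O(r^{2l+3})$, and the quotient defining $\mu'$ extends continuously to $r=0$ with value $0$, so the solution theory applies from the center.

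For global existence I would record: (a) $m$ is non-decreasing with $m\geq m(r_0)\geq0$ (from $\lambda_0\geq0$); (b) $\mu$ is non-decreasing (since $\overline H_\Phi\geq0$ and $m\geq0$); (c) because $e^{-(2l+4)\mu}\leq e^{-(2l+4)\mu_0}$ and $g_\phi, h_\phi$ are bounded, $\overline G_\Phi(\cdot,\mu(\cdot))$ and $\overline H_\Phi(\cdot,\mu(\cdot))$ are bounded on bounded $r$-intervals, so $m$ is Lipschitz on $[r_0,R^*[$ and $m(r)\uparrow m^*<\infty$; and, crucially, (d) \emph{the matter switches off once $\mu\geq\log E_0$}, since then $e^\mu\sqrt{1+\ell_0/r^2}\geq E_0$ forces $g_\phi, h_\phi$, hence $\overline G_\Phi, \overline H_\Phi$, to vanish — beyond such a radius the system is the vacuum reduced Einstein system, solved by the Schwarzschild metric, which is smooth on $\{r>2m\}$. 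Now suppose $R^*<\infty$; by monotonicity the limit $\delta:=1-2m^*/R^*$ exists. If $\delta>0$, then $e^{2\lambda}=(1-2m/r)^{-1}$, and hence $\mu'$, stay bounded near $R^*$, so $m,\mu,\lambda$ and their derivatives are bounded and the solution extends past $R^*$ — contradiction.

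It remains to rule out $\delta=0$, i.e.\ $2m^*=R^*$. If $\mu$ stayed bounded on $[r_0,R^*[$, the numerator $m+4\pi r^3\overline H_\Phi$ of $\mu'$ would be bounded below by $m^*/2=R^*/4>0$ near $R^*$ while $r(r-2m)\to0$, giving $\mu'\to+\infty$; since $m'$ is bounded, $r-2m(r)\leq C(R^*-r)$ near $R^*$, so $\mu(r)\to+\infty$ as $r\to R^*$ (the integral $\int^{R^*}(R^*-s)^{-1}\,ds$ diverges) — a contradiction. Hence $\mu(r)\to+\infty$, so $\mu(r_1)\geq\log E_0$ for some $r_1<R^*$; by (d) the matter vanishes on $[r_1,R^*[$, the solution there is the Schwarzschild metric of mass $M_1:=m(r_1)$, and since the solution exists at $r_1$ we have $2M_1=2m(r_1)<r_1$, so this Schwarzschild solution — smooth for every $r>2M_1$ — extends the solution to all of $[r_0,\infty[$ by uniqueness, contradicting $R^*<\infty$. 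Therefore $R^*=\infty$, and $\lambda=-\tfrac12\log(1-2m/r)\in C^1([r_0,\infty[)$.

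The step I expect to carry all the weight is the a priori bound $r-2m(r)>0$, equivalently the finiteness of $e^{2\lambda}$: unlike in the Newtonian (Vlasov--Poisson) setting this is not automatic — the crude estimate $m(r)=O(r^{2l+3})$ grows too fast on its own to keep $2m(r)$ below $r$ — and it is recovered only through the cutoff structure (d), which shuts the matter off once $\mu$ passes $\log E_0$ and reduces everything to a globally regular Schwarzschild exterior, together with the short self-consistency argument above that forbids a bounded-$\mu$ collapse of $r-2m(r)$. The only other points requiring care are the $C^1$-regularity of $g_\phi, h_\phi$ — exactly where $l>1/2$ is used — and the regular-center case $r_0=0$.
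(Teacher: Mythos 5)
Your proposal is correct in substance and follows the same skeleton the paper attributes to Rein (local existence for the $(m,\mu)$ reduction, then a continuation argument), but it departs at exactly the point the paper flags as the crux: the paper states that the extension to $R=\infty$ rests on ``the crucial use of the Tolman--Oppenheimer--Volkov equation'' \eqref{TOV::equation}, whereas your continuation argument never invokes it. What lets you avoid TOV is a structural feature of the kinetic ansatz that you correctly isolate: $\rho=\overline G_\Phi(r,\mu(r))$ and $p=\overline H_\Phi(r,\mu(r))$ are explicit pointwise functions of $\mu$ alone, nonnegative, decreasing in $\mu$, and vanishing once $e^{\mu}\sqrt{1+\ell_0/r^2}\geq E_0$; since $\mu$ is monotone increasing, $\rho$ and $p$ are a priori bounded on any bounded $r$-interval with no equation of state or pressure evolution to control. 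Your dichotomy at the maximal radius $R^*$ is then airtight: if $R^*-2m^*>0$ everything is bounded and the solution extends; if $R^*-2m^*=0$, the Lipschitz bound $r-2m(r)\leq C(R^*-r)$ forces $\mu'\gtrsim (R^*-r)^{-1}$, hence $\mu\to+\infty$, hence the matter switches off at some $r_1<R^*$, hence $m$ is constant on $[r_1,R^*[$ and $2m^*=2m(r_1)<r_1<R^*$ --- a contradiction. In a fluid model the pressure is an independent unknown and TOV is what delivers the analogous a priori bound, so your route is genuinely more elementary here, at the price of being tied to the Vlasov ansatz. Two smaller points deserve care if you were to write this out fully: (i) the exponent bookkeeping for $g_\phi'$ is slightly off as stated --- finiteness of $\int\phi(\varepsilon)\varepsilon^3(\varepsilon^2-t^2)^{l-1/2}d\varepsilon$ only needs $l>-1/2$, while boundedness of the integrand (hence the Lipschitz bound you actually use) needs $l\geq 1/2$; the hypothesis $l>1/2$ covers both, but the justification should be stated for the property you use; (ii) the regular-centre case $r_0=0$ is not a ``short remark'': the right-hand side $\bigl(m+4\pi r^3\overline H_\Phi\bigr)/\bigl(r(r-2m)\bigr)$ is not Lipschitz in the unknowns at $r=0$ even after the continuous extension by $0$, and one must run the Picard iteration on the integral formulation with the weight $m(r)/r^2=O(r^{2l+1})$ built in, as Rein and Rendall do. Neither point affects the validity of the overall argument.
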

Local existence is first proven for $\displaystyle \lambda,\mu\in C^1([r_0, R[)$ where $R>r_0$. Then, solutions are shown to extend to $R=\infty$. To this end, we note the crucial use of the Tolman-Oppenheimer-Volkov (TOV) equation: 
\begin{equation}
\label{TOV::equation}
p'(r) = -\mu'(r)(p(r) + \rho(r)) - \frac{2}{r}(p(r) - p_T(r)),
\end{equation}
\begin{equation*}
\rho(r) = \overline G_{\Phi}(r, \mu(r)),\;\; p(r) = \overline H_{\Phi}(r, \mu(r)),\;\text{and}\; p_T(r) = \overline K_{\Phi}(r, \mu(r)). 
\end{equation*}
$\rho$, $p$ and $p_T$ are interpreted respectively as the energy density, the radial pressure and the tangential pressure. 
\begin{figure}[h!]
\includegraphics[width=\linewidth]{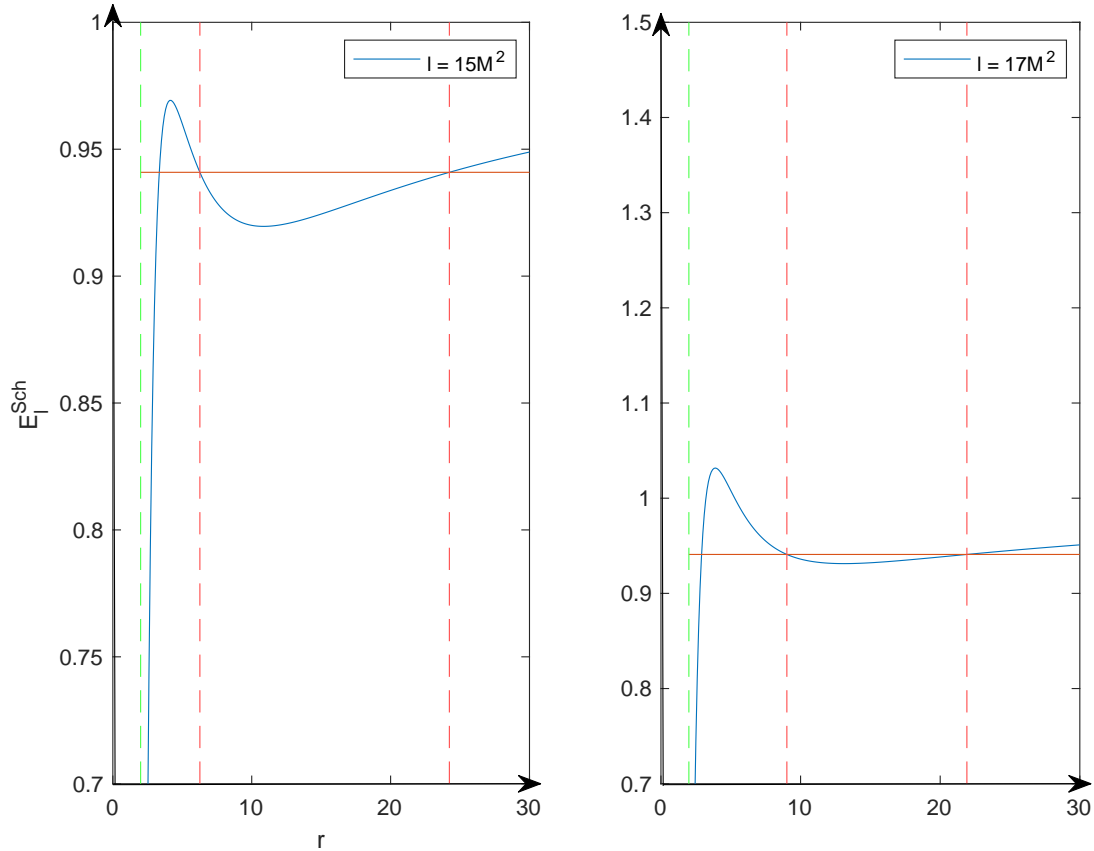}
\label{comparison}
\caption{The three roots of the equation $E_\ell^{Sch}(r) = E^2$ with $E = 0.97$ and $M=1$ for two cases of $\ell$: Left panel  $\ell < 16M^2$, which corresponds to the case where $E^{max}(\ell)< 1$. Right panel : $\ell>16M^2$, where  $E^{max}(\ell)> 1$.}
\end{figure}
In order to construct solutions outside from the Schwarzschild black hole, the assumption $\ell_0>0$ plays an important role in having vacuum between $2M$ and some $r_0> 2M$, to be defined below. The construction is then based on  gluing a vacuum region and a region containing Vlasov matter. A posteriori, one can check that the spacetime has a finite total ADM mass and the matter has a finite radius.
\\ For  the gluing,
\begin{itemize}
\item one starts by fixing a Schwarzschild black hole of mass $M$, and one then imposes a vacuum region until $r_0> 2M$. The position $r_0$ will be chosen in the following way:
\begin{enumerate}
\item First, one can  fix $E_0 = 1$. We note that, according to Proposition \ref{Propo1}, there can be no bounded orbits for $E_0>1$ in the Schwarzschild spacetime, which motivates this value of $E_0$\footnote{In the small data regime, one expects that the range of parameters leading to trapped geodesics to be close to that of Schwarzschild.}. 
This implies 
\begin{align*}
\overline G_{\Phi}(r, u) = \overline H_{\Phi}(r, u) = 0 \quad\quad\text{if } e^u\sqrt{1+ \frac{\ell_0}{r^2}} \geq 1, \\
\overline G_{\Phi}(r, u) , \overline H_{\Phi}(r, u) > 0 \quad\quad\text{if } e^u\sqrt{1+ \frac{\ell_0}{r^2}} < 1. \\
\end{align*}
In particular, the Schwarzschild metric with mass $M$ solves the reduced Einstein-Vlasov system for all $r>2M$ such that 
\begin{equation*}
\sqrt{1 - \frac{2M}{r}}\sqrt{1 + \frac{\ell_0}{r^2}} \geq 1.
\end{equation*}
This imposes $\ell_0> 16M^2$ and we have vacuum region when  $\displaystyle r\in[r_-, r_+]$, where
\begin{equation*}
r_\pm := \frac{\ell_0 \pm \sqrt{\ell_0^2 - 16M^2\ell_0}}{4M}.
\end{equation*}
Define 
\begin{equation}
\label{Abound:Rein}
\Abound^{Rein} = \left\{(E, \ell)\in]\sqrt{\frac{8}{9}}, 1[\times [\ell_0, \infty[ \right\}.
\end{equation}
Note that $\Abound^{Rein}$ is strictly included in $\Abound$. Indeed, there exists trapped geodesics of the Schwarzschild spacetime  such that $E<1$ and $\ell< 16 M^2$, cf Figure \ref{comparison} and Proposition \ref{Propo1}. In contrast, our solutions can be possibly supported on $\displaystyle \left]\sqrt{\frac{8}{9}}, 1\right[\times]12M^2, +\infty[$.
\item Now, one can impose the distribution function to be zero in the region $]2M, r_-[$ and extend the metric with a Schwarzschild metric up to $2M$. Note that in this context, the ansatz on $f$ is no longer valid for $r>2M$ so that the distribution function is not purely a function of $E$ and $\ell$.  This is related to our cut-off function $\Chi_\eta$. In fact, we recall from Proposition \ref{Propo1} that for a particle with $(E, \ell)\in\Abound$, two orbits are possible. The cut-off function selects only the trapped ones which are located in the region $r\geq r_1(E,\ell)$, "similar to" the region $r\geq r_+$ in Rein's work.   
\end{enumerate}
\item Next, one can impose initial data at $r_0 := r_+$ to be 
\begin{equation*}
\mu_0 := \sqrt{1 - \frac{2M}{r_0}}, \quad\quad \lambda_0 := \frac{1}{\sqrt{1 - \frac{2M}{r_0}}}. 
\end{equation*}
and apply Proposition \ref{glob::existence}. The solution to the Einstein Vlasov system is then obtained on $[r_0, \infty[$ and extended to the whole domain  by gluing it to the Schwarzschild metric at $r_0$. 
\end{itemize}

\section{Statement of the main result}
\label{main:result}
In this section, we give a more detailed formulation of our result. 
More precisely, we have 
\begin{theoreme}
\label{main::result}
Let $M>0$ and let $\mathcal O = \mathbb R\times]2M, \infty[ \times S^2$ be the domain of outer communications parametrised by the standard Schwarzschild coordinates $(t, r, \theta, \phi)$. Let $B_{bound}\subset\subset \mathcal A_{bound}$ be a compact subset of the set $\Abound$ defined in the following way:
\begin{itemize}
\item Fix $\displaystyle E_1, E_2\in\left(\sqrt{\frac{8}{9}}, 1\right)$ such that $E_1 < E_2$, 
\item Let $[\ell_1, \ell_2]$ be any compact subset of $]\ell_{lb}(E_2), \ell_{ub}(E_1)[$, where $\ell_{lb}$ and $\ell_{ub}$ are respectively defined by \eqref{l::lb} and \eqref{l::ub}.
\item Set 
\begin{equation}
\label{B::bound}
\displaystyle B_{bound} := [E_1, E_2]\times[\ell_1, \ell_2].
\end{equation} 
\end{itemize}
Let $\Phi:\Abound\times\mathbb R_+ \to \mathbb R_+$ be a $C^2$ function with respect to the first two variables, $C^1$ with respect to the third variable and such that
\begin{itemize}
\item $\displaystyle \forall\delta\in[0, \infty[\;$, $\displaystyle \Phi(\cdot, \cdot; \delta)$ is supported in $\displaystyle B_{bound}$.  
\item $\displaystyle \forall (E, \ell)\in\Abound\,,\;\; \Phi(E, \ell; 0) = \partial_\ell\Phi(E, \ell; 0) = 0$, does not identically vanish and $\displaystyle\forall\delta>0, \Phi(\cdot, \cdot, \delta)$ does not identically vanish on $B_{bound}$.   
\end{itemize}
Then, there exists $\delta_0>0$ and a one-parameter family of functions 
\begin{equation*}
(\lambda^\delta, \mu^\delta)_{\delta\in[0, \delta_0[}\in(C^2(]2M, \infty[))^2, \; f^\delta\in C^2(\mathcal O\times\mathbb R^3)
\end{equation*}
with the following properties 
\begin{enumerate}
\item $(\lambda^0, \mu^0) = (\lambda^{Sch}, \mu^{Sch})$ corresponds to a Schwarzschild solution with mass $M$. 
\item For all $(E, \ell)\in B_{bound}$, the equation 
\begin{equation*}
e^{\mu^\delta(r)}\left(1 + \frac{\ell}{r^2} \right) = E^2 
\end{equation*} 
admits three distinct positive roots $2M< r_0(\mu^\delta, E, \ell)< r_1(\mu^\delta, E, \ell) <r_2(\mu^\delta, E, \ell)$.  Moreover, there exists $\eta>0$ depending only on $\delta_0$ such that 
\begin{equation*}
r_0(\mu^\delta, E, \ell) + \eta < r_1(\mu^\delta, E, \ell).  
\end{equation*}
\item  The function $\displaystyle f^\delta$ takes the form 
\begin{equation*}
f^\delta(x,v) =\Phi(E^\delta, \ell; \delta)\Psi_\eta\left(r, (E^\delta, \ell), \mu^\delta\right), 
\end{equation*}
for $(x, v)\in\mathcal O\times \mathbb R^3$ with coordinates $(t, r, \theta, \phi, v^r, v^\theta, v^\phi)$ and where 
\begin{equation}
E^\delta := e^{2\mu^\delta(r)}\sqrt{1 + (e^{\lambda^\delta(r)}v^r)^2 + (rv^\theta)^2 + (r\sin\theta v^\phi)^2}\quad,\quad  \ell := r^4\left((v^\theta)^2 + \sin^2\theta(v^\phi)^2\right),
\end{equation}
and $\Psi_\eta$ is defined by \eqref{cut::off}-\eqref{cut:off:bis}. 
\item Let $g^\delta$ be defined by
\begin{equation}
\label{metric:::ansatz}
g^\delta_{(t, r, \theta, \phi)}= -e^{2\mu^{\delta}(r)} dt^2 + e^{2\lambda^{\delta}(r)} dr^2 + r^2(d\theta^2 + \sin^2\theta d\phi^2),
\end{equation}
then $(\mathcal O, g_\delta, f^\delta)$ is a static and spherically symmetric solution to the Einstein-Vlasov system \eqref{EFE} - \eqref{Vlasov2} - \eqref{EM_tensor} describing a  matter shell orbiting a Schwarzschild like black hole in the following sense: 
\begin{itemize}
\item $\displaystyle \exists R_{min}^\delta, R_{max}^\delta \in]2M,\infty[$ called the matter shell radii, which satisfy 
\begin{equation*}
R_{min}^\delta < R_{max}^\delta, \quad\text{and}\quad R_{min}^\delta>4M. 
\end{equation*}
\item $\forall r\in]2M, R_{min}^\delta[\cup]R_{max}^\delta, \infty[ ,$
\begin{equation*}
f^\delta(x, v) = 0,
\end{equation*}
\item $\exists \tilde r\in]R_{min}^\delta, R_{max}^\delta[ ,$
\begin{equation*}
f^\delta(\tilde r, \cdot) > 0.   
\end{equation*}
\item The metric $g^\delta$ is given by the  Schwarzschild metric with mass $M$ in the region $]2M, R_{min}^\delta[$.
\item $\exists M_\delta >M$ such that the metric $g^\delta$ is again given by the  Schwarzschild metric with mass $\displaystyle M^\delta$ in the region $]R_{max}^\delta, \infty[$. 
\end{itemize}
\end{enumerate}
\end{theoreme}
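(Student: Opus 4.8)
The strategy is to reduce the full system to a single fixed-point equation for the metric coefficient $\mu$ on the compact interval $\overline I:=[2M+\rho,R]$ and to solve it by the implicit function theorem, the Schwarzschild profile being the trivial solution at $\delta=0$. Fix $\rho,R$ as in Section \ref{Preliminaries} so that $2M+\rho<\min_{B_{bound}}r_0^{Sch}$ and $R>\max_{B_{bound}}r_2^{Sch}$. By Lemma \ref{aux}, any solution of the reduced system \eqref{REV1}--\eqref{REV2} on $I$ with boundary data \eqref{lambda::0}--\eqref{mu::0} satisfying the bound \eqref{cond::m} is determined by $\mu$ alone, via \eqref{lambda:delta} and the integral identity \eqref{mu:delta}. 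I would therefore introduce, on a neighbourhood of $(\mu^{Sch},0)$ in $C^1(\overline I)\times\mathbb R$,
\begin{equation*}
\mathcal F(\mu,\delta)(r):=\mu(r)-\mu_0-\int_{2M+\rho}^{r}\frac{4\pi s\,H_{\Phi}(s,\mu;\delta)+s^{-2}m(\mu)(s)}{1-2m(\mu)(s)/s}\,ds,
\end{equation*}
with $m(\mu)$ as in \eqref{mass} and $G_\Phi,H_\Phi$ as in \eqref{G::Phi}--\eqref{H::Phi}, keeping in mind that these depend on $\mu$ not only pointwise (through $E=e^{\mu(r)}\varepsilon$) but also through the cut-off $\Psi_\eta$, i.e.\ through the second-largest root $r_1(\mu,E,\ell)$ of $e^{2\mu(r)}(1+\ell/r^2)=E^2$. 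Since $\Phi(\cdot,\cdot;0)\equiv 0$, at $\delta=0$ one has $G_\Phi\equiv H_\Phi\equiv 0$ and $m(\mu)\equiv M$, so $\mathcal F(\mu,0)$ is independent of $\mu$ apart from the leading term; in particular $\mathcal F(\mu^{Sch},0)=0$ and $D_\mu\mathcal F(\mu^{Sch},0)=\mathrm{Id}$ is an isomorphism.

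The substance of the argument is that $\mathcal F$ is well defined and of class $C^1$ near $(\mu^{Sch},0)$. For $\mu$ close to $\mu^{Sch}$ one has $2m(\mu)(r)<r$ on $\overline I$, so the integrand is finite. Since $B_{bound}\subset\subset\Abound$ and circular orbits are excluded on $\Abound$, for every $(E,\ell)\in B_{bound}$ the Schwarzschild effective potential $\Esch$ meets the level $E^2$ at three \emph{simple} roots $2M<r_0^{Sch}<r_1^{Sch}<r_2^{Sch}$, so $\frac{d}{dr}\big(e^{2\mu(r)}(1+\ell/r^2)\big)$ stays bounded away from $0$ near each of them for $\mu$ near $\mu^{Sch}$; the implicit function theorem applied to the root equation then shows that the roots $r_i(\mu,E,\ell)$ persist, remain ordered with a gap uniform over $B_{bound}$, and are $C^1$ in $(\mu,E,\ell)$ --- this is the quantitative control of the effective potential. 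Composition with the smooth profile $\Chi_\eta$ makes $\Psi_\eta(\cdot,(E,\ell),\mu)$ a $C^1$ function of $\mu$, and after the reduction \eqref{G::Phi}--\eqref{H::Phi} the momentum integrals run over the \emph{$\mu$-independent} domain $\{\varepsilon\ge1,\ 0\le\ell\le r^2(\varepsilon^2-1)\}$ with the singular weight $(\varepsilon^2-1-\ell/r^2)^{-1/2}$ also independent of $\mu$; hence $\mu$-differentiation never hits that weight and, $\Phi$ being $C^2$ in $(E,\ell)$ and $C^1$ in $\delta$, differentiation under the integral sign yields that $G_\Phi,H_\Phi$ are $C^1$ in $(\mu,\delta)$, while the $r$-regularity needed for $\mu^\delta\in C^2$ and $f^\delta\in C^2$ is recovered, as usual, after the substitution $\ell=r^2(\varepsilon^2-1)\sin^2\psi$ that removes the boundary singularity. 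Granting this, the implicit function theorem produces $\delta_0>0$ and a family $(\mu^\delta)_{\delta\in[0,\delta_0[}$, a posteriori of class $C^2(\overline I)$, with $\mu^0=\mu^{Sch}$ and $\mathcal F(\mu^\delta,\delta)=0$; one then sets $\lambda^\delta$ by \eqref{lambda:delta}, extends $(\mu^\delta,\lambda^\delta)$ to $]2M,\infty[$ by Schwarzschild profiles, and defines $f^\delta$ by the ansatz \eqref{f::ansatz}.

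It remains to verify the listed properties. Property $1$ is built in. For property $2$, compactness of $B_{bound}$ in the open set $\Abound$ gives a uniform positive lower bound for the gaps among the $r_i^{Sch}$, so for $\delta_0$ small the three roots $r_i(\mu^\delta,E,\ell)$ persist with $2M+\rho<r_0(\mu^\delta,E,\ell)$, $r_2(\mu^\delta,E,\ell)<R$, and one may fix $\eta>0$ (depending only on $\delta_0$) so small that $r_0(\mu^\delta,E,\ell)+\eta<r_1(\mu^\delta,E,\ell)$ on $B_{bound}$. Property $3$ is the form of $f^\delta$; that $f^\delta$ genuinely solves \eqref{Vlasov2} is because, for $(E,\ell)\in B_{bound}$, the classification of Proposition \ref{Propo1} applied to the nearly-Schwarzschild $g_\delta$ forces every geodesic with these parameters either to remain in $]2M,r_0(\mu^\delta,E,\ell)]$, where $\Chi_\eta(\,\cdot-r_1(\mu^\delta,E,\ell))\equiv 0$ by the gap just fixed, or to be trapped in $[r_1(\mu^\delta,E,\ell),r_2(\mu^\delta,E,\ell)]$, where $\Chi_\eta(\,\cdot-r_1)\equiv 1$; in either case $\Psi_\eta$, hence $f^\delta$, is constant along the orbit, so $L[f^\delta]=0$. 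For property $4$, the $r$-support of $f^\delta$ lies in $[\min_{B_{bound}}r_1(\mu^\delta,\cdot)-\eta,\ \max_{B_{bound}}r_2(\mu^\delta,\cdot)]$, so one sets $R_{min}^\delta:=\min_{B_{bound}}r_1(\mu^\delta,\cdot)-\eta$ and $R_{max}^\delta:=\max_{B_{bound}}r_2(\mu^\delta,\cdot)$, and $R_{min}^\delta>4M$ for $\delta_0$ small by Lemma \ref{monotonicity}. On $]2M,R_{min}^\delta[$ the matter terms vanish, so $m(\mu^\delta)\equiv M$ and $g_\delta$ is Schwarzschild of mass $M$; on $]R_{max}^\delta,\infty[$ they vanish again, so $m(\mu^\delta)\equiv M^\delta:=M+4\pi\int_{2M+\rho}^{R_{max}^\delta}s^2G_\Phi(s,\mu^\delta;\delta)\,ds$ is constant and $g_\delta$ is Schwarzschild of mass $M^\delta$, after a constant rescaling of $t$ restoring $\mu^\delta(+\infty)=0$ (a gauge transformation tending to the identity as $\delta\to0$). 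Finally $G_\Phi\ge0$, and for $\delta>0$ the profile $\Phi(\cdot,\cdot;\delta)$ is positive on a nonempty subset of $B_{bound}$ where $\Psi_\eta\equiv1$ (the trapped region $r\ge r_1$), so $G_\Phi>0$ on a set of positive measure and $M^\delta>M$, with $f^\delta(\tilde r,\cdot)>0$ at $\tilde r:=r_1(\mu^\delta,E,\ell)$ for any such $(E,\ell)$; the third Einstein equation holds automatically, as a consequence of \eqref{REV1}--\eqref{REV2} and the contracted Bianchi identity together with $\nabla^\mu T_{\mu\nu}=0$, valid because $f^\delta$ is constant along the flow.

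I expect the main obstacle to be the content of the second paragraph: proving that the nonlinear operator $\mathcal F$ --- and in particular the matter functionals $G_\Phi,H_\Phi$ through their dependence on the cut-off $\Psi_\eta$ --- is continuously Fréchet differentiable near $(\mu^{Sch},0)$, which rests on the uniform, quantitative control of the roots $r_i(\mu,E,\ell)$ of the perturbed effective potential over the compact parameter set $B_{bound}$ and on handling the integrable singularity of the momentum integrals.
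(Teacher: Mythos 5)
Your proposal is correct and follows essentially the same route as the paper: reduction to the integral operator $G(\mu;\delta)$ via Lemma \ref{aux}, the implicit function theorem with $D_\mu G(\mu^{Sch};0)=\mathrm{Id}$ (forced by $\Phi(\cdot,\cdot;0)\equiv 0$), uniform control of the three roots of the perturbed effective potential over the compact set $B_{bound}$ (Proposition \ref{matter::shell}), regularization of the momentum integrals by a change of variables, verification of $2m<r$, and gluing to Schwarzschild profiles of masses $M$ and $M^\delta$. You also correctly single out the genuine technical core, namely the continuous Fréchet differentiability of $G_\Phi,H_\Phi$ through the cut-off $\Psi_\eta$ and the root $r_1(\mu,E,\ell)$, which is exactly where the paper invests its effort by redoing the fixed-point argument to get $(E,\ell)$-uniform neighbourhoods of $\mu^{Sch}$.
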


\section{Solving the reduced Einstein Vlasov system}
\label{Set-up}
\subsection{Set up for the implicit function theorem}
In this section, we define the solution mapping on which we are going to apply the implicit function theorem.
\begin{theoreme}[Implicit function theorem \footnote{See Theorem 17.6 , Ch. 17 of \cite{gilbarg2015elliptic} for a proof.}]
\label{IFT}
Let $\mathcal B_1$, $\mathcal B_2$ and $X$ be Banach spaces and $G$ a mapping from an open subset $\mathcal U$ of $\mathcal B_1\times X$ into $\mathcal B_2$. Let $(u_0, \sigma_0)$ be a point in $\mathcal U$ satisfying 
\begin{enumerate}
\item $G[u_0, \sigma_0] = 0$,
\item $G$ is continuously Fréchet differentiable on  $ \mathcal U$,
\item the partial Fréchet derivative with respect to the first variable  $L = G^1_{(u_0, \sigma_0)}$ is invertible.
\end{enumerate} 
Then, there exists a neighbourhood $\mathcal N$ of $\sigma_0$ in $X$ such that the equation $G[u,\sigma] = 0$ is solvable for each $\sigma\in \mathcal N$, with solution $u = u_\sigma\in\mathcal B_1$. 
\end{theoreme}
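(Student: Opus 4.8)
The plan is to prove this via the Banach fixed point theorem, reducing the solvability of $G[u,\sigma]=0$ to finding a fixed point of an associated contraction. First I would note that, since $L = G^1_{(u_0,\sigma_0)}$ is invertible as a bounded linear operator between the Banach spaces $\mathcal B_1$ and $\mathcal B_2$, its inverse $L^{-1}$ is automatically bounded by the bounded inverse theorem. For each $\sigma\in X$ near $\sigma_0$, I would then define the map
\begin{equation*}
T_\sigma(u) := u - L^{-1} G[u, \sigma],
\end{equation*}
and observe that, because $L^{-1}$ is injective, $u$ is a fixed point of $T_\sigma$ if and only if $G[u,\sigma] = 0$. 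Thus it suffices to produce a fixed point of $T_\sigma$ in some closed ball around $u_0$.

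The key step is to show that $T_\sigma$ is a contraction that maps a suitable ball into itself. Differentiating $T_\sigma$ in its argument $u$ gives
\begin{equation*}
D_u T_\sigma(u) = I - L^{-1} G^1_{(u,\sigma)} = L^{-1}\bigl(L - G^1_{(u,\sigma)}\bigr),
\end{equation*}
which vanishes at $(u_0,\sigma_0)$ since $G^1_{(u_0,\sigma_0)} = L$. Because $G$ is continuously Fréchet differentiable, the map $(u,\sigma)\mapsto G^1_{(u,\sigma)}$ is continuous, so I can choose $r>0$ and a neighbourhood $\mathcal N$ of $\sigma_0$ such that $\|D_u T_\sigma(u)\| \le \|L^{-1}\|\,\|L - G^1_{(u,\sigma)}\| \le \tfrac12$ for all $u$ in the closed ball $\overline B(u_0, r)$ and all $\sigma\in\mathcal N$. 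The mean value inequality for Fréchet differentiable maps on the convex ball then yields $\|T_\sigma(u) - T_\sigma(u')\| \le \tfrac12\|u - u'\|$ there.

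To confirm the self-mapping property, I would estimate $\|T_\sigma(u) - u_0\| \le \|T_\sigma(u) - T_\sigma(u_0)\| + \|T_\sigma(u_0) - u_0\|$. The first term is at most $\tfrac12 r$ by the contraction bound, while the second equals $\|L^{-1} G[u_0, \sigma]\|$, which tends to $0$ as $\sigma\to\sigma_0$ because $G[u_0,\sigma_0] = 0$ and $G$ is continuous; shrinking $\mathcal N$ makes it at most $\tfrac12 r$. Hence $T_\sigma$ maps $\overline B(u_0, r)$ into itself and is a $\tfrac12$-contraction on it. Since $\overline B(u_0, r)$ is a complete metric space, being a closed subset of the Banach space $\mathcal B_1$, the Banach fixed point theorem produces a (unique) fixed point $u_\sigma\in\overline B(u_0,r)$, which is the desired solution of $G[u,\sigma]=0$ for every $\sigma\in\mathcal N$.

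The main obstacle is upgrading the pointwise information available at $(u_0,\sigma_0)$ to a uniform contraction estimate valid on an entire product $\overline B(u_0,r)\times\mathcal N$, which is precisely where the $C^1$ hypothesis (continuity of the Fréchet derivative) and the boundedness of $L^{-1}$ are indispensable: without continuity of $G^1$ one could control $D_u T_\sigma$ only at the base point, not on a neighbourhood, and the contraction estimate would break down. I emphasise that the stated conclusion requires only solvability, so the argument above—which in fact also gives local uniqueness—is more than sufficient; continuity or smoothness of the solution map $\sigma\mapsto u_\sigma$ need not be addressed.
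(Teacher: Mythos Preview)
Your argument is correct and is precisely the standard contraction-mapping proof of the implicit function theorem. Note, however, that the paper does not give its own proof of this statement: it is quoted as a classical result with a reference to Gilbarg--Trudinger (Theorem~17.6, Chapter~17), so there is no in-paper proof to compare against. Your approach is the same as the one found in such references, so there is nothing to contrast.
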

We recall that a mapping $\displaystyle G : \mathcal U \subset \mathcal B_1\times X \to \mathcal B_2$ is said to be Fréchet differentiable at a point $(u, \sigma)\in\mathcal U$ if there exits a continuous linear map $L(u, \sigma): \mathcal B_1\times X\to \mathcal B_2$ such that 
\begin{equation*}
\lim_{||(\delta u, \delta\sigma)||_{\mathcal B_1\times X}\to 0}\;\frac{||G(u + \delta u, \sigma + \delta\sigma) - G(u, \sigma) - L(u, \sigma)\cdot(\delta u, \delta\sigma)||_{\mathcal B_2}}{||(\delta u, \delta\sigma) ||_{\mathcal B_1\times X}} = 0. 
\end{equation*} 
$G$ is  Fréchet differentiable  if it is Fréchet differentiable at every point $(u, \sigma)\in\mathcal U$. It is continuously Fréchet differentiable if the mapping 
\begin{align*}
L :&\;\mathcal U \to \mathcal L(\mathcal B_1\times X, \mathcal B_2) , \\
& (u, \sigma) \mapsto L(u, \sigma)
\end{align*}
is continuous. For every $(u, \sigma)\in \mathcal U$ such that $G$ is Fréchet differentiable, the map $L(u, \sigma)$ is called the Fréchet differential at $(u, \sigma)$ of $G$ and it is noted $DG_{(u, \sigma)}$. 
\\By the partial Fréchet derivatives of $G$, denoted $G^1_{(u, \sigma)}, G^2_{(u, \sigma)}$ at $(u, \sigma)$, we mean the bounded linear mappings from $\mathcal B_1$, $X$ respectively, into $\mathcal B_2$ defined by 
\begin{equation*}
G^1_{(u, \sigma)}(h) := DG_{(u, \sigma)}(h, 0) \;,\; G^2_{(u, \sigma)}(k):= DG_{(u, \sigma)}(0, k), 
\end{equation*} 
for $h\in \mathcal B_1$, $k\in X$.
\\ 
\\Solutions to the reduced Einstein-Vlasov system will be obtained by perturbing the Schwarzschild spacetime using a bifurcation parameter $\delta\geq 0$. The latter turns on in the presence of Vlasov matter supported on $\displaystyle B_{bound}\subset\subset \Abound$. To this end, we transform the problem of finding solutions to the differential equation \eqref{mu::prime} into the problem of finding zeros of an operator $G$, for which we will apply the implicit function theorem. The Schwarzschild metric and the parameter $\delta = 0$, more precisely  $(\displaystyle \mu^{Sch}, 0)$, will be a zero of this operator.
\\For this, we adjust the ansatz \eqref{f::ansatz} to make the dependence on $\delta$ explicit:
\begin{equation}
\label{f:ansatz}
f = \Phi(E, \ell; \delta)\Chi_\eta(r - r_1(\mu, E, \ell)),
\end{equation}
such that 
\begin{equation*}
\forall (E, \ell)\in \Abound, \quad \Phi(E, \ell;\, 0) = 0
\end{equation*}
where $\Phi : \Abound \times \mathbb R_+ \to \mathbb R_+$. 
We will impose in the following some regularity conditions on $\Phi$ so that the solution operator is well defined. Assuming that $(\lambda, \mu, f)$ solve the EV-system, we can apply Lemma \ref{aux} with the ansatz \eqref{f:ansatz} to obtain 
\begin{equation}
\label{lambda:delta}
e^{-2\lambda(r)} = 1 - \frac{2m(\mu; \delta)(r)}{r}, 
\end{equation}
where
\begin{equation}
\label{mass}
m(\mu; \delta)(r) := M + 4\pi\int_{2M+\rho}^{r}s^2G_\Phi(s, \mu;\delta)\,ds,
\end{equation}
\begin{equation}
\label{mu:prime}
\mu'(r) =  e^{2\lambda(r)}\left( 4\pi r H_{\Phi}(r, \mu; \delta) + \frac{1}{r^2}m(\mu; \delta)(r) \right)
\end{equation}
and 
\begin{equation}
\label{mu:delta}
\mu(r) = \mu_0 + \int_{2M+\rho}^{r}\frac{1}{1 - \frac{2m(\mu; \delta)(s)}{s}}\left(4\pi s H_{\Phi}(s, \mu; \delta) + \frac{1}{s^2}m(\mu; \delta)(s)\right)\,ds
\end{equation} 

where 
\begin{align}
\label{G::Phi}
G_\Phi(r, \mu; \delta) &:= \frac{\pi}{r^2}\int_1^\infty\;\int_0^{r^2\left(\varepsilon^2 - 1\right)}\Phi(e^{\mu(r)}\varepsilon, \ell; \delta)\Psi_\eta(r, (e^{\mu(r)}\varepsilon, \ell), \mu)\frac{\varepsilon^2}{\sqrt{\varepsilon^2 - 1 - \frac{\ell}{r^2}}}\;d\ell d\varepsilon   , \\
\label{H::Phi}
H_\Phi(r, \mu; \delta) &:= \frac{\pi}{r^2}\int_1^\infty\;\int_0^{r^2\left(\varepsilon^2 - 1\right)}\Phi(e^{\mu(r)}\varepsilon, \ell; \delta)\Psi_\eta(r, (e^{\mu(r)}\varepsilon, \ell), \mu)\sqrt{\varepsilon^2 - 1 - \frac{\ell}{r^2}}\;d\ell d\varepsilon.
\end{align}
As we mention before, we will apply Theorem \ref{IFT} to solve \eqref{mu:prime}. Once obtained, we deduce $\lambda$ and $f$ through \eqref{lambda:delta} and \eqref{f:ansatz}. We define now the function space in which we will obtain the solutions of \eqref{mu:prime}.  We consider the Banach space
\begin{equation}
\label{def::I}
\mathcal X := \left(C^1(\overline{I}), ||\cdot||_{C^1(\overline{I})}\right), \quad\quad I = ]2M+\rho, R[
\end{equation}
and we recall the definition of the $C^1$ norm on $C^1(\Ibarre)$. 
\begin{equation}
\forall g\in C^1(\Ibarre) \quad :\quad ||g||_{C^1(\Ibarre)} := ||g||_{\infty} + ||g'||_{\infty}. 
\end{equation}
We define  $\mathcal U (\tilde\delta_0)\subset \mathcal X\times[0, \tilde\delta_0[ $ for some $  \tilde\delta_0\in]0,  \delta_{max}[ $ as
\begin{equation}
\mathcal U(\tilde\delta_0) := \left\{ (\mu; \delta)\in\mathcal X\times[0, \tilde\delta_0[ \;:\; \left|\left|\mu-\mu^{Sch}\right|\right|_{C^1(\Ibarre)}< \tilde\delta_0  \right\} = B(\mu^{Sch}, \tilde\delta_0)\times[0, \tilde\delta_0[, 
\end{equation}
where $B(\mu^{Sch}, \tilde\delta_0)$ is the open ball in $\mathcal X$ of centre $\mu^{Sch}$ and radius $\tilde\delta_0$ and $\delta_{max}$ is defined by 
\begin{equation*}
 \delta_{max}:=  \min\left\{\min_{(E, \ell)\in B_{bound}} r_0^{Sch}(E, \ell) - (2M+\rho), R - \max_{(E, \ell)\in B_{bound}} r_2^{Sch}(E, \ell)\right\}.
\end{equation*}
$\tilde\delta_0$ will be chosen small enough so that the three roots of the equation \eqref{effective::potential} exist and so that the condition
\begin{equation}
%\label{cond::m}
\forall r\in I\;,\; \quad\quad2m(\mu; \delta)(r) < r 
\end{equation}
is satisfied.  Besides, we make the following assumptions on $\Phi$:
\begin{itemize}
\item[$(\Phi_1)$] $\displaystyle \forall \delta\in[0, \tilde\delta_0[, \;\; \text{supp}\,\Phi(\cdot\,;\delta)\subset B_{bound}$, 
\item[$(\Phi_2)$] $\displaystyle \Phi$ is $C^1$ with respect to $\displaystyle \delta$ and $\displaystyle \exists C> 0,\, \forall (E, \ell)\in B_{bound}, \;\forall \delta\in[0, \tilde\delta_0[, \quad |\Phi(E, \ell; \delta)|\leq C$,
\item[$(\Phi_3)$] $\displaystyle \Phi$ is $C^2$ with respect to $\displaystyle (E, \ell)$. 
\end{itemize}
In view of \eqref{mu:delta}, we define the solution operator $G$ corresponding to $\mu$ by
\begin{align*}
G:\mathcal U(\tilde\delta_0)&\to \mathcal X \\
(\tilde\mu; \delta)&\mapsto G(\tilde\mu; \delta)
\end{align*}
where $\forall r\in\Ibarre$, 
\begin{equation}
\label{G::mu}
G(\tilde\mu; \delta)(r) := \tilde \mu(r) -  \left( \mu_0 + \int_{2M+\rho}^{r}\frac{1}{1 - \frac{2m(\tilde\mu; \delta)(s)}{s}}\left(4\pi s H_{\Phi}(s, \tilde\mu; \delta) + \frac{1}{s^2}m(\tilde\mu; \delta)(s)\right)\,ds \right). 
\end{equation}
We verify the steps  allowing to apply Theorem \ref{IFT}.  In Subsection \ref{radii::matter} below, we show that we have a well-defined ansatz for $f$, that is the existence of a matter shell surrounding the black hole after a well chosen $\tilde\delta_0$. In Subsection \ref{matter::terms}, we investigate the regularity of the matter terms and in Subsection \ref{Condition::m::r}, we check that \eqref{cond::m} is always satisfied after possibly shrinking $\tilde\delta_0$.

\subsection{Radii of the matter shell}
\label{radii::matter}
In this section, we show that for a suitable choice of $\tilde\delta_0$, $\forall \mu\in B(\mu^{Sch}, \tilde\delta_0)$, there exists $R_{min}(\mu), R_{max}(\mu)\in I$ satisfying $\displaystyle R_{min}(\mu)<R_{max}(\mu)$,  such that  $\displaystyle \text{supp}_r\,f \subset [R_{min}(\mu), R_{max}(\mu)]$. We also show that the ansatz for $f$ \eqref{f::ansatz} is well defined. More precisely, we state the following result

\begin{Propo}
\label{matter::shell}
Let $ 0< \tilde\delta_0 < \delta_{max}$. Then, there exists $\displaystyle\delta_0\in]0, \tilde\delta_0]$ such that $\displaystyle \forall\mu\in B(\mu^{Sch}, \delta_0)$, $\displaystyle \forall(E, \ell)\in B_{bound}$, there exist unique $\displaystyle r_i(\mu, (E, \ell))\in B(r^{Sch}_i(E, \ell), \delta_0)\footnote{$B(r^{Sch}_i(E, \ell), \delta_0) = \left\{ r\in I\;:\; |r - r^{Sch}_i(E, \ell)| <\delta_0 \right\}$.}, i\in\left\{0, 1, 2\right\}$ such that $r_i(\mu, (E, \ell))$ solve the equation
\begin{equation}
\label{eq--}
e^{2\mu(r)}\left(1 + \frac{\ell}{r^2}\right) = E^2. 
\end{equation}  
Moreover, there are no other roots for the	above equation outside the balls $B(r_i^{Sch}(E, \ell), \delta_0)$.
\end{Propo}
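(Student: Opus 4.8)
The plan is to view \eqref{eq--} as a small perturbation of the Schwarzschild relation $\Esch(r) = E^2$ and to run a quantitative version of the implicit function theorem (equivalently, a monotonicity plus intermediate value argument), uniformly in $(E,\ell)\in B_{bound}$. By Proposition \ref{Propo1} (case $2.c.ii.B$) and Lemma \ref{monotonicity}, for every $(E,\ell)\in B_{bound}\subset\subset\Abound$ the equation $\Esch(r)=E^2$ has exactly three roots $2M<r_0^{Sch}(E,\ell)<r_1^{Sch}(E,\ell)<r_2^{Sch}(E,\ell)$; these are \emph{simple} (a double root would force a circular orbit, which is excluded since $B_{bound}$ avoids the critical curves $\ell=\ell_{lb}(E)$ and $\ell=\ell_{ub}(E)$), they lie in the interior of $\Ibarre=[2M+\rho,R]$ by the choice of $\rho$ and $R$, and they depend smoothly on $(E,\ell)$. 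The first step is to extract from the compactness of $B_{bound}$ uniform constants $c_0,d_0>0$, depending only on $M,\rho,R,B_{bound}$, such that $|(\Esch)'(r_i^{Sch}(E,\ell))|\ge c_0$, $|r_i^{Sch}(E,\ell)-r_j^{Sch}(E,\ell)|\ge d_0$ for $i\ne j$, and $\mathrm{dist}(r_i^{Sch}(E,\ell),\partial I)\ge d_0$.

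Next I would control the perturbation. Writing $E_\ell[\mu](r):=e^{2\mu(r)}(1+\ell/r^2)$ for the effective potential \eqref{effective::potential} associated with a metric coefficient $\mu$, the boundedness of $\mu$, $\mu^{Sch}$ and $1+\ell/r^2$ on $\Ibarre$, uniformly over $(E,\ell)\in B_{bound}$ and over $\mu$ in a fixed $C^1$-ball around $\mu^{Sch}$, yields an estimate
\begin{equation*}
\big\|E_\ell[\mu]-\Esch\big\|_{C^1(\Ibarre)}\ \le\ C\,\big\|\mu-\mu^{Sch}\big\|_{C^1(\Ibarre)},\qquad C=C(M,\rho,B_{bound}).
\end{equation*}
I would then fix $\delta_0\in(0,\tilde\delta_0]$ small enough, in terms of $c_0,d_0,C$ and the modulus of continuity of $(\Esch)'$ on the relevant compact set, so that: (a) the intervals $J_i(E,\ell):=[r_i^{Sch}(E,\ell)-\delta_0,\ r_i^{Sch}(E,\ell)+\delta_0]$ are pairwise disjoint and contained in $I$; (b) $|(\Esch)'|\ge c_0/2$ on each $J_i(E,\ell)$; and (c) for $\mu\in B(\mu^{Sch},\delta_0)$ the quantity $C\,\|\mu-\mu^{Sch}\|_{C^0}$ is strictly smaller than both $\tfrac14 c_0\,\delta_0$ and $\tfrac12\kappa(\delta_0)$, where $\kappa(\delta_0):=\inf\{|\Esch(r)-E^2|:(E,\ell)\in B_{bound},\ r\in\Ibarre\setminus\bigcup_i \mathrm{int}\,J_i(E,\ell)\}$ is positive because on that set $r$ stays away from all three roots of $\Esch(\cdot)=E^2$.

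With $\delta_0$ so chosen, fix $\mu\in B(\mu^{Sch},\delta_0)$ and $(E,\ell)\in B_{bound}$. On each $J_i(E,\ell)$ the function $r\mapsto E_\ell[\mu](r)-E^2$ is strictly monotone, with derivative of modulus $\ge c_0/2-C\delta_0>0$, and by (c) it takes values of opposite signs at the two endpoints of $J_i$; hence it has a unique zero $r_i(\mu,(E,\ell))$ in $\mathrm{int}\,J_i(E,\ell)=B(r_i^{Sch}(E,\ell),\delta_0)$, with the mean value theorem giving the quantitative bound $|r_i(\mu,(E,\ell))-r_i^{Sch}(E,\ell)|\le 2C\|\mu-\mu^{Sch}\|_{C^0}/c_0$. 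On $\Ibarre\setminus\bigcup_i J_i(E,\ell)$ one has $|E_\ell[\mu](r)-E^2|\ge\kappa(\delta_0)-C\delta_0>0$, so \eqref{eq--} has no other root; since $2M+\rho$ and $R$ lie outside $\bigcup_i J_i$ and the sign of $E_\ell[\mu]-E^2$ there is inherited from the Schwarzschild case, this accounts for all roots and proves the proposition. (The local existence/uniqueness step is precisely the implicit function theorem applied to $F(r,\mu):=E_\ell[\mu](r)-E^2$ at $(r_i^{Sch}(E,\ell),\mu^{Sch})$, where $\partial_rF=(\Esch)'\ne 0$; the additional input is the uniformity of the neighbourhoods over the compact set $B_{bound}$.)

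The step I expect to be the main obstacle is making everything \emph{uniform} over $(E,\ell)\in B_{bound}$: producing $c_0,d_0,\kappa(\delta_0)$ and $C$ independent of $(E,\ell)$ rests on $B_{bound}\subset\subset\Abound$ together with the smooth dependence of the roots $r_i^{Sch}$ on $(E,\ell)$ from Lemma \ref{monotonicity}, and on the bookkeeping needed to relate, through a single parameter $\delta_0$, the admissible size of the metric perturbation to the size of the intervals trapping the three roots (in a careful write-up one takes the metric neighbourhood of radius $\epsilon_0\le\delta_0$ with $C\epsilon_0$ below the thresholds in (c), which is what actually makes the sign change at $\partial J_i$ robust).
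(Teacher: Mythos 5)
Your argument is correct and proves the statement, but it replaces the paper's mechanism for the local inversion. The paper runs a uniform Banach fixed point argument: it introduces the Newton-type map $F^{E,\ell}_\mu(r) = r - \phi(E,\ell)\bigl(E_\ell(r)-E^2\bigr)$ with $\phi(E,\ell)=\bigl((\Esch)'(r_i^{Sch}(E,\ell))\bigr)^{-1}$, shows it is a $\tfrac12$-contraction of $\overline B(r_i^{Sch}(E,\ell),\delta_0)$ into itself uniformly over $B_{bound}$ (using exactly the compactness inputs you isolate as $c_0$ and $C$), and obtains $r_i(\mu,(E,\ell))$ as the fixed point; you instead produce the root from strict monotonicity of $r\mapsto E_\ell(r)-E^2$ on each $J_i(E,\ell)$ together with a sign change at the endpoints, which is more elementary and yields uniqueness inside $J_i$ for free. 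The treatment of \emph{no other roots outside the balls} is essentially identical in the two proofs: a uniform positive lower bound on $|\Esch-E^2|$ on $\Ibarre\setminus\bigcup_i B(r_i^{Sch}(E,\ell),\delta_0)$, stable under the $O(\delta_0)$ perturbation of the potential. Two remarks. First, the contraction formulation buys something your intermediate-value argument does not: within the same proof the paper goes on to show that $\mu\mapsto r_i(\mu,(E,\ell))$ is Lipschitz and continuously Fréchet differentiable on $B(\mu^{Sch},\delta_0)$, which is used later in Proposition \ref{regularity::G::H}; with your route this regularity of the root map would have to be derived separately (e.g.\ by the quantitative bound $|r_i(\mu)-r_i(\bar\mu)|\le 4C\|\mu-\bar\mu\|_{C^0}/c_0$ that your monotonicity estimate already gives, followed by the expansion of $F^{E,\ell}$). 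Second, the bookkeeping issue you flag at the end is real: with a single radius $\delta_0$ for both the metric ball and the root balls, the inequality $C\delta_0<\tfrac14 c_0\delta_0$ cannot be arranged by shrinking $\delta_0$, so the radius of the $\mu$-ball must be taken small relative to $c_0\delta_0/C$; the paper's own proof elides this in the same way when it claims one can \emph{update} $\delta_0$ so that $\tfrac{\delta_0}{2}+C\delta_0\le\delta_0$, so your explicit acknowledgment of the decoupling is, if anything, more careful than the source.
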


\begin{remark}
A direct application of the implicit function theorem would yield the existence of the three roots but for neighbourhoods of $\mu^{Sch}$ which a priori may depend on $(E, \ell)$. Thus, we revisit its proof  in order to obtain bounds uniform in $(E, \ell)$.
\end{remark}

\begin{proof}
\begin{enumerate}
\item \underline{Existence and uniqueness}:
Let $(E, \ell)\in B_{bound}$. Consider the mapping 
\begin{align*}
F^{E, \ell}: B(\mu^{Sch}, \tilde\delta_0)\times I &\to  \mathbb R \\
(\quad \mu\quad;\quad r\quad)&\mapsto E_\ell(r) - E^2 = e^{2\mu(r)}\left( 1 + \frac{\ell}{r^2}\right) - E^2.
 \end{align*} 
We have 
\begin{itemize}
\item $\displaystyle F^{E, \ell}$ is continuously Fréchet differentiable on $B(\mu^{Sch}, \tilde\delta_0) \times I$. In fact, 
\begin{itemize}
\item[\ding{169}] $\displaystyle \forall \mu\in B(\mu^{Sch}, \tilde\delta_0), \;$ the mapping $\displaystyle r\mapsto F^{E, \ell}(\mu, r) $ is continuously differentiable on $I$ with derivative 
\begin{equation*}
\frac{\partial F^{E, \ell}}{\partial r}(\mu, r) = 2e^{2\mu(r)}\left(\left( 1 + \frac{\ell}{r^2}\right)\mu'(r) - \frac{\ell}{r^3}\right).
\end{equation*} 
\item[\ding{169}]  It is easy to see that the mapping $\overline F^{E, \ell}:\mu\mapsto \overline F^{E, \ell}[\mu](r):= F^{E, \ell}(\mu, r)$ is continuously Fréchet differentiable on $\displaystyle B(\mu^{Sch}, \tilde\delta_0)$ with Fréchet derivative:
\begin{equation*}
\forall\mu\in B(\mu^{Sch}, \tilde\delta_0)\; ,\; \forall\tmu\in\mathcal X,\quad D\overline F^{E, \ell}(\mu)[\tmu](r) = 2\tmu(r)e^{2\mu(r)}\left(1 + \frac{\ell}{r^2} \right). 
\end{equation*}
\item[\ding{169}] Since the Fréchet differentials with respect to each variable exist and they are continuous, $F^{E, \ell}$ is continuously Fréchet differentiable on $B(\mu^{Sch}, \tilde\delta_0)\times I$. 

\end{itemize}
\item The points $(\mu^{Sch}, r_0^{Sch}(E, \ell))$, $(\mu^{Sch}, r_1^{Sch}(E, \ell))$ and $(\mu^{Sch}, r_2^{Sch}(E, \ell))$ are zeros for $F^{E, \ell}$. 
\item The differential  of $F^{E, \ell}$ with respect to $r$ at these points does not vanish. Otherwise, the trajectory is circular which is not possible since  $(E, \ell)\in\Abound$.  
\end{itemize}
We define the following mapping on $I$:  $\forall\mu\in B(\mu^{Sch}, \tilde\delta_0),\;$ 
\begin{equation*}
F_\mu^{E, \ell}(r) := r - \phi(E, \ell)F^{E, \ell}(\mu, r), 
\end{equation*}
where $\phi$ is defined by 
\begin{equation*}
\phi(E, \ell) = \left(\frac{\partial F^{E, \ell}}{\partial r}(\mu^{Sch}, r_i^{Sch}(E, \ell))\right)^{-1}. 
\end{equation*}
We will show that there exists $0<\delta_0<\tilde\delta_0$, uniform in $(E, \ell)$ such that $\forall\mu\in B(\mu^{Sch}, \tilde\delta_0),$ $F_\mu^{E, \ell}$ is a contraction on $\overline{B}(r_i^{Sch}(E, \ell), \delta_0)$. 
\\ Since $F^{E, \ell}$ is differentiable with respect to $r$, we have 
\begin{equation*}
\forall r\in I\;,\quad \frac{\partial F_\mu^{E, \ell}}{\partial r}(r) =  1 - \phi(E, \ell)\frac{\partial F^{E, \ell}}{\partial r}(\mu, r). 
\end{equation*}
Since the mappings $(E, \ell)\mapsto r_i^{Sch}$ and $(E, \ell)\mapsto F^{E, \ell}(\mu, r)$ depend smoothly on $(E, \ell)$ and since $B_{bound}$ is compact, there exists $C>0$ such that $\forall (E, \ell)\in B_{bound}$
\begin{equation*}
\left| \phi(E, \ell)\right| \leq C. 
\end{equation*}  
Moreover,   there exists $\delta_0<\tilde\delta_0$ such that $\forall(E, \ell)\in B_{bound}\; ,\;\forall (\mu, r)\in B(\mu^{Sch},\delta_0)\times B(r_i^{Sch}(E, \ell), \delta_0)$, 
\begin{equation*}
\left| \frac{\partial F^{E, \ell}}{\partial r}(\mu, r) - \frac{\partial F^{E, \ell}}{\partial r}(\mu^{Sch}, r_i^{Sch}(E, \ell)) \right| \leq \frac{1}{2(C+1)}. 
\end{equation*}
In fact, 
\begin{align*}
&\frac{\partial F^{E, \ell}}{\partial r}(\mu, r) - \frac{\partial F^{E, \ell}}{\partial r}(\mu^{Sch}, r_i^{Sch}(E, \ell)) =  \\
&2e^{\mu(r)}\left(\left( 1 + \frac{\ell}{r^2}\right)\mu'(r) - \frac{\ell}{r^3} - \left( 1 + \frac{\ell}{\left(r_i^{Sch}(E, \ell)\right)^2}\right)\mu'(r_i^{Sch}(E, \ell)) + \frac{\ell}{\left(r_i^{Sch}(E, \ell)\right)^3} \right) \\
&+ 2\left(\left( 1 + \frac{\ell}{\left(r_i^{Sch}(E, \ell)\right)^2}\right)\mu'(r_i^{Sch}(E, \ell)) - \frac{\ell}{\left(r_i^{Sch}(E, \ell)\right)^3}\right)\left(e^{2\mu(r)} - e^{2\mu^{Sch}(r)}\right)
\end{align*}
Since $B_{bound}$ is compact and the different quantities depend continuously on $(E, \ell)$, there exists $C_0>0$ such that $\forall(E, \ell)\in B_{bound}$, 
\begin{align*}
\left| 2\left(\left( 1 + \frac{\ell}{\left(r_i^{Sch}(E, \ell)\right)^2}\right)\mu'(r_i^{Sch}(E, \ell)) - \frac{\ell}{\left(r_i^{Sch}(E, \ell)\right)^3}\right)\left(e^{2\mu(r)}- e^{2\mu^{Sch}(r)} \right)\right| &\leq C_0\left| e^{2\mu(r)} - e^{2\mu^{Sch}(r)} \right| \\
&\leq C_0C_1\tilde\delta_0. 
\end{align*}
Similarly, we have 
\begin{align*}
\left| \frac{\ell}{r^3} - \frac{\ell}{\left(r_i^{Sch}(E, \ell)\right)^3} \right| &= \ell\left| \frac{r^2 - 2rr_i^{Sch}(E, \ell) + \left(r_i^{Sch}(E, \ell)\right)^2}{r^3r_i^{Sch}(E, \ell)^3}\right|\left|r - r_i^{Sch}(E, \ell) \right| \leq C_0\tilde\delta_0
\end{align*}
and
\begin{align*}
\left|\left(1 + \frac{\ell}{r^2}\right)\mu'(r) - \left( 1 + \frac{\ell}{\left(r_i^{Sch}(E, \ell)\right)^2}\right)\mu'(r_i^{Sch}(E, \ell)) \right| &\leq \left| \left(1 + \frac{\ell}{r^2}\right)\left(\mu'(r) -  \mu'(r_i^{Sch}(E, \ell)\right)\right|  \\
&+ \left| \ell\mu'(r_i^{Sch}(E, \ell)\left(\frac{1}{r^2} - \frac{1}{\left(r_i^{Sch}(E, \ell)\right)^2}\right) \right|  \\
&\leq C_0\tilde\delta_0. 
\end{align*}
Moreover, $r\mapsto e^{2\mu(r)}$ is bounded on $I$.  Therefore, we choose $\delta_0$ so that 
\begin{equation*}
\left| \frac{\partial F^{E, \ell}}{\partial r}(\mu, r) - \frac{\partial F^{E, \ell}}{\partial r}(\mu^{Sch}, r_i^{Sch}) \right| \leq \frac{1}{2(C+1)}. 
\end{equation*}
Hence, we obtain 
\begin{equation*}
\left| \frac{\partial F_{\mu}^{E, \ell}}{\partial r}(r) - \frac{\partial F_{\mu^{Sch}}^{E, \ell}}{\partial r}(r_i^{Sch}) \right| \leq \frac{C}{2(C+1)} \leq \frac{1}{2}. 
\end{equation*}
Since $\displaystyle \frac{\partial F_{\mu^{Sch}}^{E, \ell}}{\partial r}(r_i^{Sch}(E, \ell)) = 0$, we get 
\begin{equation*}
\left| \frac{\partial F_{\mu}^{E, \ell}}{\partial r}(r) \right|\leq \frac{1}{2}.
\end{equation*}
Now, we show that 
\begin{equation*}
\forall r\in B(r_i^{Sch}(E, \ell), \delta_0)\;, \quad \left|F_{\mu}^{E, \ell}(r) - r_i^{Sch}(E, \ell)\right| \leq \delta_0. 
\end{equation*}
We have $\displaystyle \forall(E, \ell)\in B_{bound},\, \forall r\in B(r_i^{Sch}(E, \ell), \delta_0)$
\begin{align*}
F_{\mu}^{E, \ell}(r) - r_i^{Sch}(E, \ell) &= F_{\mu}^{E, \ell}(r) - F_{\mu^{Sch}}r_i^{Sch}  \\
&= F_{\mu}^{E, \ell}(r) - F_{\mu}(r_i^{Sch}(E, \ell)) + F_{\mu}^{E, \ell}(r_i^{Sch}(E, \ell)) - F_{\mu^{Sch}}(r_i^{Sch}(E, \ell)).
\end{align*}
By the mean value theorem, 
\begin{equation*}
\left| F_{\mu}^{E, \ell}(r) - F_{\mu}(r_i^{Sch}(E, \ell)) \right| \leq \frac{1}{2}|r - r^{Sch}_i(E, \ell)| \leq \frac{\delta_0}{2}.
\end{equation*}
Moreover, by the same argument above, we show that there exists $C>0$ independent from $(E, \ell)$ such that 
\begin{equation*}
\left|F_{\mu}^{E, \ell}(r_i^{Sch}) - F_{\mu^{Sch}}(r_i^{Sch}(E, \ell))\right| \leq C\delta_0. 
\end{equation*}
Therefore, we can update $\delta_0$ so that 
\begin{equation*}
\left|F_{\mu}^{E, \ell}(r) - r_i^{Sch} \right| \leq \delta_0. 
\end{equation*}
It remains to show that $F_\mu^{E, \ell}$ is a contraction on $\overline B(r_i^{Sch}(E, \ell), \delta_0),\;\; \forall\mu\in B(\mu^{Sch}, \delta_0)$. For this, we have by the mean value theorem,  
\begin{equation*}
\left|F_\mu^{E, \ell}(r_1) - F_\mu^{E, \ell}(r_2)\right| \leq \frac{1}{2}|r_1 - r_2|, \quad\forall r_1, r_2\in B(r_i^{Sch}(E, \ell), \delta_0). 
\end{equation*}
Thus, we can apply the fixed point theorem to obtain : there exists $\displaystyle \delta_0<\tilde\delta_0  $ such that 
\begin{equation*}
\forall(E, \ell)\in B_{bound}, \;\forall\mu\in B(\mu^{Sch}, \delta_0)\;\exists! r_i^{E, \ell}:B(\mu^{Sch}, \delta_0)\to B(r_i^{Sch}(E, \ell), \delta_0) \;\text{such that}\; F^{E, \ell}(\mu, r_i^{E,\ell}(\mu)) = 0.  
\end{equation*}
\item \underline{Regularity}: It remains to show that $\forall (E, \ell)\in B_{bound},$ the mapping $r_i^{(E, \ell)}$ is continuously Fréchet differentiable on $B(\mu^{Sch}, \delta_0)$. In order to lighten the expressions, we will not write the dependence of $r_i$ on $(E, \ell)$. 
\\First, we show that $r_i$ is Lipschitz. For this, let $\mu, \overline \mu\in B(\mu^{Sch}, \delta_0)$ and set
\begin{equation*}
r = r_i(\mu), \quad\quad \overline r =  r_i(\overline\mu). 
\end{equation*}
We have
\begin{align*}
r - \overline r &= r_i(\mu) - r_i(\overline\mu) \\
&= F_\mu(r) - F_{\overline\mu}(\overline r) \\
&= F_\mu(r) -  F_{\mu}(\overline r)  + F_\mu(\overline r) - F_{\overline\mu}(\overline r) \\
&= F_\mu(r) -  F_{\mu}(\overline r) + \phi(E, \ell)\left( F^{E, \ell}(\mu, \overline r) - F^{E, \ell}(\overline \mu, \overline r)\right). 
\end{align*}
We have 
\begin{equation*}
\left|F_\mu(r) -  F_{\mu}(\overline r)\right| \leq \frac{1}{2}\left| r - \overline r\right| 
\end{equation*}
and
\begin{align*}
\left| \phi(E, \ell)\left(F^{E, \ell}(\mu, \overline r) - F^{E, \ell}(\overline \mu, \overline r)\right)\right|\leq C\left|\left|\mu - \overline\mu\right|\right|_{\mathcal X}.
\end{align*}
Therefore,
\begin{equation*}
|r - \overline r| \leq 2C \left|\left|\mu - \overline\mu\right|\right|_{\mathcal X}. 
\end{equation*}
Thus, $r_i$ is Lipschitz, so continuous on $B(\mu^{Sch}, \delta_0)$. Since 
\begin{equation*}
\forall r\in B(r_i^{Sch}, \delta_0), \;\quad \left| \frac{\partial F_{\mu}^{E, \ell}}{\partial r}(r) \right|\leq \frac{1}{2}, 
\end{equation*}
$\displaystyle \mathlarger{\mathlarger{\mathlarger{\Sigma}}}\,\left(\frac{\partial F_{\mu}^{E, \ell}}{\partial r}(r) \right)^k$ converges to $\displaystyle \left( 1 - \frac{\partial F_{\mu}^{E, \ell}}{\partial r}(r) \right)^{-1} = \frac{1}{\phi(E, \ell)\frac{\partial F^{E, \ell}}{\partial r}}$. Hence, 
\begin{equation*}
\forall r\in B(r_i^{Sch}, \delta_0),\,\forall\mu\in B(\mu^{Sch}, \delta_0), \quad\quad  \frac{\partial F^{E, \ell}}{\partial r}(r, \mu) \neq 0. 
\end{equation*}
Since $F^{E, \ell}$ is differentiable at $(\overline\mu, \overline r)$, we have
\begin{equation*}
0 = F^{E, \ell}(\mu, r) - F^{E, \ell}(\overline\mu, \overline r)= D_\mu F^{E, \ell}(\overline\mu, \overline r)\cdot(\mu - \overline\mu) + \partial_rF^{E, \ell}(\overline\mu, \overline r)(r - \overline r) +  o\left(||\mu - \overline\mu||_{\mathcal X} + |r - \overline r|\right).  
\end{equation*}
By the above estimates we have 
\begin{equation*}
o(|r - \overline r|) = o (||\mu - \overline\mu||_{\mathcal X}).
\end{equation*}
Therefore, 
\begin{equation*}
 r - \overline r = - \left(\partial_r F^{E, \ell}(\overline\mu, \overline r)\right)^{-1}D_\mu F^{E, \ell}(\overline\mu, \overline r)\cdot(\mu - \overline\mu) + o (||\mu - \overline\mu||).
 \end{equation*}
\item Finally, we note that after updating $\delta_0$, we obtain 
\begin{equation}
\label{uniform::control}
\left|\left| E_\ell - E^{Sch}_\ell\right|\right|_{C^1(\Ibarre)} < \delta_0. 
\end{equation}
In fact, $\forall r\in\Ibarre$, 
\begin{align*}
E_\ell(r) - E_\ell^{Sch}(r) = \left(1 + \frac{\ell}{r^2}\right)\left(e^{2\mu}(r) - e^{2\mu^{Sch}}(r) \right) \\
\end{align*}
We have 
\begin{equation*}
\left| e^{2\mu}(r) - e^{2\mu^{Sch}}(r)\right| \leq C\delta_0, 
\end{equation*}
and $\displaystyle r\mapsto 1 + \frac{\ell}{r^2}$ is bounded. Therefore, we can control $\displaystyle ||E_\ell - E_\ell^{Sch}||_\infty$. 
Now, we compute $\forall r\in I$
\begin{align*}
E'_\ell(r) - {E^{Sch}_\ell}'(r) &= 2e^{2\mu(r)}\left(\mu'(r)\left(1 + \frac{\ell}{r^2}\right) - \frac{\ell}{r^3}\right)  - 2e^{2\mu^{Sch}(r)}\left({\mu^{Sch}}'(r)\left(1 + \frac{\ell}{r^2}\right) - \frac{\ell}{r^3}\right) \\
&= -\frac{2\ell}{r^3}\left(e^{2\mu(r)} - e^{2\mu^{Sch}(r)} \right) + 2\left(1 + \frac{\ell}{r^2}\right)\left( \mu'(r)e^{2\mu(r)} - {\mu^{Sch}}'(r)e^{2\mu^{Sch}(r)}\right) 
\end{align*}
We control the last term in the following way
\begin{align*}
\left|  \mu'(r)e^{2\mu(r)} - {\mu^{Sch}}'(r)e^{2\mu^{Sch}(r)} \right| &\leq   \left| \mu'(r)\left(e^{2\mu(r)} - e^{2\mu^{Sch}(r)}\right) \right| + \left|  \left( \mu'(r) - {\mu^{Sch}}'(r)\right)e^{2\mu^{Sch}(r)} \right|  \\
&\leq C\delta_0. 
\end{align*}
\item \underline{Uniqueness of the roots on $\Ibarre$:} We show, after possibly shrinking $\delta_0$, that $\forall\mu\in B(\mu^{Sch}, \delta_0)\,\forall (E, \ell)\in B_{bound},\;$ there are no others roots of the equation \eqref{eq--}. 
First, we set 
\begin{equation*}
\forall r\in\Ibarre\;,\quad P_{E, \ell}(r) := E_\ell(r) - E^2
\end{equation*}
and 
\begin{equation*}
\forall r\in\Ibarre\;,\quad P^{Sch}_{E, \ell}(r) := E^{Sch}_\ell(r) - E^2. 
\end{equation*}
We claim that for $\delta_0$ sufficiently small, there exists $\displaystyle C>0$ such that $\displaystyle \forall (E, \ell)\in B_{bound}\,,\, \forall r\in J := \Ibarre\backslash \bigcup_{i=0,1,2}B(r_i^{Sch}(E, \ell), \delta_0)$ we have  
\begin{equation*}
\left|P_{E, \ell}^{Sch}(r)\right| > C\delta_0. 
\end{equation*} 
In fact, by the monotonicity properties of $P_{E, \ell}^{Sch}$, it is easy to see that
\begin{align*}
 \displaystyle \forall (E, \ell)\in B_{bound}\,,\, \forall r\in J\,:\; 
&\left|P_{E, \ell}^{Sch}(r)\right| \geq \max\left\{\left|P_{E, \ell}^{Sch}(r_0^{Sch}(E, \ell)- \delta_0)\right|, P_{E, \ell}^{Sch}(r_0^{Sch}(E, \ell)+ \delta_0)\right. \\
&\left.  ,  P_{E, \ell}^{Sch}(r_1^{Sch}(E, \ell) - \delta_0), P_{E, \ell}^{Sch}(r_1^{Sch}(E, \ell) + \delta_0),  \right. \\
&\left. \left|P_{E, \ell}^{Sch}(r_2^{Sch}(E, \ell) - \delta_0)\right|, P_{E, \ell}^{Sch}(r_2^{Sch}(E, \ell) + \delta_0)\right\}.
\end{align*}
Now, we show that for all $|h|$ sufficiently small, there exist $C(h)> 0$ uniform in $(E, \ell)$ such that 
\begin{equation*}
|P_{E, \ell}^{Sch}(r_i^{Sch}(E, \ell)+ h)| > C(h). 
\end{equation*}
We have 
\begin{equation*}
P_{E, \ell}^{Sch}(r_i^{Sch}(E, \ell)+ h) = P_{E, \ell}^{Sch}(r_i^{Sch}(E, \ell)) + h(P_{E, \ell}^{Sch})'(r_i^{Sch}(E, \ell)) + o(h),
\end{equation*}
where $o(h)$ is uniform in $(E, \ell)$ by continuity of $(P_{E, \ell}^{Sch})^{(k)}$ with respect to $(E, \ell)$ and compactness of $B_{bound}$. Moreover,  $r_i^{Sch}(E, \ell)$ are simple roots so that $(P_{E, \ell}^{Sch})'(r_i^{Sch}(E, \ell)) \neq 0$ and $\displaystyle (E, \ell)\mapsto (P_{E, \ell}^{Sch})'(r_i^{Sch}(E, \ell))$ is continuous. Hence, for $h$ sufficiently small, we obtain
\begin{equation*}
|P_{E, \ell}^{Sch}(r_i^{Sch}(E, \ell)+ h)| > |h|\left|(P_{E, \ell}^{Sch})'(r_i^{Sch}(E, \ell))\right|> C|h|, 
\end{equation*}
where $C$ is some constant which uniform in $\delta_0$ and $(E, \ell)$. Therefore, we update $\delta_0$ so that 
\begin{equation*}
\left|P_{E, \ell}^{Sch}(r_i^{Sch}(E, \ell) \pm \delta_0)\right| > C\delta_0.
\end{equation*}  
Now, let $\delta_1 <\delta_0$. Then, $\displaystyle  \forall \mu\in B(\mu^{Sch}, \delta_1)\subset B(\mu^{Sch}, \delta_0) $,  $\displaystyle \forall (E, \ell)\in B_{bound},\; r_i(\mu, E, \ell)$ is unique in the ball $B(r^{Sch}_i(E, \ell), \delta_0)$.  Moreover, $\forall r\in J$, we have 
\begin{equation*}
P_{E, \ell}(r) = P_{E, \ell}(r) - P_{E, \ell}^{Sch}(r) + P_{E, \ell}^{Sch}(r).
\end{equation*}
By the triangular inequality, the latter implies for $\delta_1 < C\delta_0$ that 
\begin{equation*}
\forall r\in J, \quad\quad |P_{E, \ell}(r)| > -\delta_1 +  C\delta_0 > 0.
\end{equation*}
Therefore, after updating $\delta_1$, $P_{E, \ell}$ does not vanish outside the balls $B(r_i^{Sch}(E, \ell), \delta_1)$.  This yields the uniqueness. 
\end{enumerate}
\end{proof}

\begin{lemma}
$\displaystyle\forall\mu\in B(\mu^{Sch}, \delta_0)$, $r_i(\mu,\cdot)$ have the same monotonicity properties as $r_i^{Sch}$. More precisely, $\forall (E, \ell)\in B_{bound}, $
\begin{itemize}
\item $\displaystyle r_0(\mu, E, \cdot)$ decreases  on $\left]\ell_1, \ell_2\right[$ and  $\displaystyle r_0(\mu, \cdot, \ell)$ increases   on $\left] E_1, E_2 \right[$, 
\item $\displaystyle r_1(\mu, E, \cdot)$ increases  on $\left]\ell_1, \ell_2\right[$ and $\displaystyle r_1(\mu, \cdot, \ell)$ decreases on $\left] E_1, E_2 \right[$, 
\item $\displaystyle r_2(\mu, E, \cdot)$ decreases  on $\left]\ell_1, \ell_2\right[$ and $\displaystyle r_2(\mu, \cdot, \ell)$ increases on $\left] E_1, E_2 \right[$. 
\end{itemize}
\end{lemma}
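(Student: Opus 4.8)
The plan is to follow the strategy of the proof of Lemma~\ref{monotonicity}, with the Schwarzschild effective potential $\Esch$ replaced by the perturbed one, $E_\ell(r)=e^{2\mu(r)}(1+\ell/r^{2})$, and to leverage the quantitative estimates already in place. First I would fix $\mu\in B(\mu^{Sch},\delta_0)$ and record that the three roots $r_i=r_i(\mu,E,\ell)$ from Proposition~\ref{matter::shell} depend in a $C^1$ way on $(E,\ell)$ on the open box $]E_1,E_2[\,\times\,]\ell_1,\ell_2[$: the map $(E,\ell,r)\mapsto E_\ell(r)-E^2$ is smooth in $(E,\ell)$ and $C^1$ in $r$ (since $\mu\in C^1$), hence jointly $C^1$, and its $r$-derivative at a root is $E_\ell'(r_i)\neq0$ — the roots are simple, as shown in the regularity step of Proposition~\ref{matter::shell} (a vanishing derivative would force a circular orbit, excluded on $\Abound$) — so the ordinary implicit function theorem applies. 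Differentiating the identity $E_\ell(r_i(\mu,E,\ell))=E^2$ then yields, exactly as in Lemma~\ref{monotonicity},
\begin{equation*}
\frac{\partial r_i}{\partial\ell}=-\frac{\partial_\ell E_\ell(r_i)}{E_\ell'(r_i)}=-\frac{e^{2\mu(r_i)}/r_i^{2}}{E_\ell'(r_i)},\qquad \frac{\partial r_i}{\partial E}=\frac{2E}{E_\ell'(r_i)}.
\end{equation*}

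Next I would observe that $\partial_\ell E_\ell(r)=e^{2\mu(r)}/r^{2}>0$ on $I$ — the same positivity as $\partial_\ell\Esch>0$ in Schwarzschild — so that $\operatorname{sign}(\partial_\ell r_i)=-\operatorname{sign}(E_\ell'(r_i))$ and $\operatorname{sign}(\partial_E r_i)=\operatorname{sign}(E_\ell'(r_i))$; thus everything reduces to identifying the sign of $E_\ell'$ at each perturbed root. For this I would argue that the map $(\mu,E,\ell)\mapsto E_\ell'(r_i(\mu,E,\ell))=\partial_rF^{E,\ell}(\mu,r_i(\mu,E,\ell))$ is continuous — using that $r_i$ is Lipschitz in $\mu$ with a constant uniform in $(E,\ell)$ (Proposition~\ref{matter::shell}) and $C^1$ in $(E,\ell)$ (previous step), hence jointly continuous, and that $\partial_rF^{E,\ell}(\mu,r)=2e^{2\mu(r)}((1+\ell/r^{2})\mu'(r)-\ell/r^{3})$ is jointly continuous — and that it never vanishes, by simplicity of the roots. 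A continuous nowhere-vanishing real function on the connected set $B(\mu^{Sch},\delta_0)\times\,]E_1,E_2[\,\times\,]\ell_1,\ell_2[$ has constant sign, and evaluating at $\mu=\mu^{Sch}$, where $r_i(\mu^{Sch},E,\ell)=r_i^{Sch}(E,\ell)$ by the uniqueness part of Proposition~\ref{matter::shell}, that sign equals the sign of $(\Esch)'(r_i^{Sch}(E,\ell))$. By the shape of $\Esch$ (Proposition~\ref{Propo1}, as in the proof of Lemma~\ref{monotonicity}) this sign is positive for $i=0,2$ — the root lies on an increasing branch of $\Esch$ — and negative for $i=1$.

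Finally I would plug these signs into the two displayed formulas, obtaining $\partial_\ell r_0<0$, $\partial_E r_0>0$; $\partial_\ell r_1>0$, $\partial_E r_1<0$; $\partial_\ell r_2<0$, $\partial_E r_2>0$ on the whole open box, so that a $C^1$ function whose derivative has constant sign is strictly monotone: $r_0(\mu,E,\cdot)$ decreasing and $r_0(\mu,\cdot,\ell)$ increasing on the respective intervals, and similarly for $r_1$ (increasing in $\ell$, decreasing in $E$) and $r_2$ (decreasing in $\ell$, increasing in $E$) — which is the claim. I expect the only slightly delicate point to be the continuity/connectedness argument pinning down the sign of $E_\ell'$ at the perturbed roots uniformly in $(E,\ell)$; there is no new analytic input, and as an alternative one could instead combine the $C^1$-closeness bound \eqref{uniform::control}, the root bound $|r_i-r_i^{Sch}|<\delta_0$, and the compactness lower bound $\min_i\min_{B_{bound}}|(\Esch)'(r_i^{Sch})|>0$ to conclude after possibly shrinking $\delta_0$.
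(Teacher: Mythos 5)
Your proof is correct and follows the route the paper clearly intends (the paper dismisses this lemma as "straightforward", and the natural argument is precisely the implicit-differentiation computation of its Lemma \ref{monotonicity} transported to the perturbed potential): differentiate $E_\ell(r_i)=E^2$, reduce everything to the sign of $E_\ell'(r_i)$, and pin that sign down via continuity and non-vanishing on a connected domain, anchored at $\mu=\mu^{Sch}$ — with the non-vanishing indeed available from the contraction estimate $|\partial_r F_\mu^{E,\ell}|\le 1/2$ in the proof of Proposition \ref{matter::shell}, which in fact directly forces $\phi(E,\ell)\,E_\ell'(r)\ge 1/2$ and hence the desired sign. Note also that your formula $\partial_E r_i = 2E/E_\ell'(r_i)$ silently corrects a sign typo in the paper's displayed derivative in the proof of Lemma \ref{monotonicity}; your sign is the one consistent with the stated monotonicity.
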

The proof of the latter lemma is straightforward. 

Therefore, using the above monotonicity properties of $\displaystyle r_1(\mu, \cdot)$ and $\displaystyle r_2(\mu, \cdot)$ one can define the radii of the matter shell in the following way:
\begin{equation}
\label{Rmin}
R_{min}(\mu) := r_1(\mu, E_2, \ell_1)
\end{equation}
and
\begin{equation}
\label{Rmax}
R_{max}(\mu) := r_2(\mu, E_2, \ell_1). 
\end{equation}
Since $\forall (E, \ell)\in \Abound ,\; r_1^{Sch}(E, \ell)> 4M$, we can update $\delta_0$ such that 
\begin{equation*}
\forall(E, \ell)\in B_{bound} \;,\; r_1(\mu, E, \ell)>4M.
\end{equation*} 
In particular, 
\begin{equation}
\label{Rmin::cond}
R_{min}(\mu) > 4M. 
\end{equation}
Now, we need to define $\eta>0$ appearing in the ansatz of $f$, \eqref{f::ansatz}. We choose $\eta$ > 0 independent of $(\mu, E, \ell)$ such that 
\begin{equation*}
r_1(\mu, E, \ell) - r_0(\mu, E, \ell) > \eta > 0.  
\end{equation*}
\begin{lemma}
There exists $\eta$ > 0 independent of $\mu, E, \ell$ such that for all $\mu\in B(\mu^{Sch}, \delta_0)$ and for all $(E, \ell)\in B_{bound}$ we have $\displaystyle r_1(\mu, E, \ell) - r_0(\mu, E, \ell) > \eta > 0.  $
\end{lemma}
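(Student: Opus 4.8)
The plan is to exploit the compactness of $B_{bound}$ together with the uniform closeness of the perturbed roots $r_i(\mu, E, \ell)$ to their Schwarzschild counterparts $r_i^{Sch}(E, \ell)$ established in Proposition \ref{matter::shell}. First I would record that, by Proposition \ref{Propo1} (case $2.c.ii.B$), for every $(E, \ell)\in B_{bound}$ the three roots are simple and satisfy the strict inequality $r_0^{Sch}(E, \ell) < r_1^{Sch}(E, \ell)$, and that by Lemma \ref{monotonicity} the maps $(E, \ell)\mapsto r_i^{Sch}(E, \ell)$ are smooth, in particular continuous, on $\Abound$. Hence the function
\[
(E, \ell)\longmapsto r_1^{Sch}(E, \ell) - r_0^{Sch}(E, \ell)
\]
is continuous and strictly positive on the compact set $B_{bound}$, so it attains a minimum $2\eta_0 > 0$ there: for all $(E, \ell)\in B_{bound}$ one has $r_1^{Sch}(E, \ell) - r_0^{Sch}(E, \ell)\geq 2\eta_0$.

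Next I would invoke Proposition \ref{matter::shell}: after the choice of $\delta_0$ made there, for every $\mu\in B(\mu^{Sch}, \delta_0)$ and every $(E, \ell)\in B_{bound}$ one has $r_i(\mu, E, \ell)\in B(r_i^{Sch}(E, \ell), \delta_0)$, i.e. $\left|r_i(\mu, E, \ell) - r_i^{Sch}(E, \ell)\right| < \delta_0$ for $i = 0, 1$. The triangle inequality then yields
\[
r_1(\mu, E, \ell) - r_0(\mu, E, \ell) \geq r_1^{Sch}(E, \ell) - r_0^{Sch}(E, \ell) - 2\delta_0 \geq 2\eta_0 - 2\delta_0 .
\]
Since $\delta_0$ was only ever required to be small, I would shrink it once more so that $\delta_0 < \eta_0/2$ — this weakens none of the earlier conclusions, which only become stronger on a smaller ball — and set $\eta := \eta_0$. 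Then $r_1(\mu, E, \ell) - r_0(\mu, E, \ell) > \eta > 0$ for all $\mu\in B(\mu^{Sch}, \delta_0)$ and all $(E, \ell)\in B_{bound}$, and $\eta$ depends only on $B_{bound}$ through $\eta_0$, hence is independent of $\mu, E, \ell$, as claimed.

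I do not expect a serious obstacle here; the proof is a pure compactness argument. The only point requiring a word of care is the bookkeeping of $\delta_0$: one must check that the successive "after possibly shrinking $\delta_0$" steps performed in Proposition \ref{matter::shell} and in the present lemma are finite in number and mutually compatible. They are, because the relevant properties — existence, uniqueness and $\delta_0$-localisation of the roots $r_i(\mu, E, \ell)$, together with the positivity of the gap $r_1 - r_0$ — are all stable under further shrinking of $\delta_0$, so one may simply take the smallest of the finitely many values produced along the way.
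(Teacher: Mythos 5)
Your proof is correct and follows essentially the same route as the paper: bound the Schwarzschild gap $r_1^{Sch}-r_0^{Sch}$ below by some $2\eta>0$ uniformly on $B_{bound}$, use the $\delta_0$-localisation of the perturbed roots from Proposition \ref{matter::shell} to lose at most $2\delta_0$, and then shrink $\delta_0$ below $\eta/2$. The only cosmetic difference is that the paper obtains the uniform lower bound via the explicit monotonicity of $(E,\ell)\mapsto r_1^{Sch}-r_0^{Sch}$ (reducing to the corner value $h^{Sch}(E_2,\ell_1)$), whereas you invoke continuity plus compactness of $B_{bound}$; both are equally valid here.
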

\begin{proof}
For this, we set 
\begin{align*}
h^{Sch}:  B_{bound} &\to \mathbb R \\
\; (E, \ell)&\mapsto r_1^{Sch}(E, \ell) - r_0^{Sch}(E, \ell)
\end{align*}
By monotonicity properties of $r_i^{Sch}$, it is easy to see that $h^{Sch}(\cdot, \ell)$ is decreasing and $h^{Sch}(E,\cdot)$ is increasing.  Therefore, one can easily bound $h$ :
\begin{equation*}
\forall(E, \ell)\in B_{bound}\;,\; h^{Sch}(E, \ell) \geq h^{Sch}(E_2, \ell) \geq h^{Sch}(E_2, \ell_1)> 0.  
\end{equation*} 
Thus, $\displaystyle\exists\eta>0$ such that $\displaystyle\forall (E, \ell)\in B_{bound}$
\begin{equation*}
h^{Sch}(E, \ell) >2\eta. 
\end{equation*}
Now we set 
\begin{align*}
h:  B(\mu^{Sch}, \delta_0)\times B_{bound} &\to \mathbb R \\
\; (\mu; (E, \ell))\quad\quad\quad&\mapsto h(\mu, (E, \ell)) := r_1(\mu, E, \ell) - r_0(\mu, E, \ell).
\end{align*}
We have, $\displaystyle\forall\mu\in B(\mu^{Sch}, \delta_0)$, $\forall (E, \ell)\in B_{bound}$
\begin{align*}
h(\mu, E, \ell) &= h(\mu, (E, \ell)) - h^{Sch}(E, \ell) +  h^{Sch}(E, \ell) > -2\delta_0 + 2\eta. 
\end{align*}
It remains to update $\delta_0$ so that the latter inequality is greater than $\eta$.
\end{proof}
Now that we have justified the ansatz for $f$, it remains to check that $\displaystyle \text{supp}_r\,f\subset[R_{min}(\mu), R_{max}(\mu)]$. To this end, we state the following result
\begin{lemma}
\label{supp:f}
Let $\mu\in B(\mu^{Sch}, \delta_0)$ and $f$ be a distribution function of the form \eqref{f::ansatz}. Then 
\begin{equation*}
\text{supp}_r\,f \subset [R_{min}(\mu), R_{max}(\mu)]. 
\end{equation*}
\end{lemma}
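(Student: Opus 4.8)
The plan is to trace back, starting from a point $(r,v^r,v^\theta,v^\phi)$ at which $f$ does not vanish, what constraints this imposes on $r$, and then to invoke the monotonicity of the roots $r_1(\mu,\cdot)$ and $r_2(\mu,\cdot)$ recorded above. So, first I would fix such a point and set $E:=E(r,v^r,v^\theta,v^\phi)$ and $\ell:=\ell(r,v^\theta,v^\phi)$ as in \eqref{energy}--\eqref{ang::momentum}. Since $f=\Phi(E,\ell)\,\Psi_\eta(r,(E,\ell),\mu)$ and $\text{supp}\,\Phi\subset B_{bound}$, nonvanishing of the first factor forces $(E,\ell)\in B_{bound}$; and since by \eqref{cut::off} the second factor then equals $\Chi_\eta\big(r-r_1(\mu,E,\ell)\big)$, its nonvanishing together with \eqref{cut:off:bis} forces $r>r_1(\mu,E,\ell)-\eta$. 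Independently, squaring \eqref{energy} and using \eqref{ang::momentum} and \eqref{effective::potential} gives the mass shell identity $E^2=E_\ell(r)+e^{2\mu(r)}\big(e^{\lambda(r)}v^r\big)^2$, whence $E_\ell(r)\le E^2$, i.e.\ $(E,\ell)\in D_r$ (cf.\ \eqref{D::r}).

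Next I would combine these two constraints with the root structure of the perturbed effective potential. By Proposition \ref{matter::shell} together with the uniform $C^1$-estimate \eqref{uniform::control}, for every $\mu\in B(\mu^{Sch},\delta_0)$ and every $(E,\ell)\in B_{bound}$ the function $r'\mapsto E_\ell(r')-E^2$ vanishes on $\overline{I}$ exactly at the three simple roots $r_0(\mu,E,\ell)<r_1(\mu,E,\ell)<r_2(\mu,E,\ell)$ and inherits there the sign pattern of $E^{Sch}_\ell-E^2$; in particular $\{r'\in\overline{I}:\,E_\ell(r')\le E^2\}=[2M+\rho,\,r_0(\mu,E,\ell)]\cup[r_1(\mu,E,\ell),\,r_2(\mu,E,\ell)]$. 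Since $\eta$ was chosen so that $r_1(\mu,E,\ell)-r_0(\mu,E,\ell)>\eta$ uniformly in $(\mu,E,\ell)$, the bound $r>r_1(\mu,E,\ell)-\eta$ rules out the component $[2M+\rho,\,r_0(\mu,E,\ell)]$, so the only possibility compatible with $E_\ell(r)\le E^2$ is $r_1(\mu,E,\ell)\le r\le r_2(\mu,E,\ell)$.

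Finally I would use the monotonicity of $r_1(\mu,\cdot)$ and $r_2(\mu,\cdot)$ on $B_{bound}$: since $(E,\ell)\in[E_1,E_2]\times[\ell_1,\ell_2]$, and $r_1(\mu,\cdot,\ell)$ decreases while $r_1(\mu,E,\cdot)$ increases, one gets $r_1(\mu,E,\ell)\ge r_1(\mu,E_2,\ell_1)=R_{min}(\mu)$; symmetrically, since $r_2(\mu,\cdot,\ell)$ increases while $r_2(\mu,E,\cdot)$ decreases, $r_2(\mu,E,\ell)\le r_2(\mu,E_2,\ell_1)=R_{max}(\mu)$. Hence $r\in[R_{min}(\mu),R_{max}(\mu)]$; as this holds for every $r$ at which $f(r,\cdot)$ is not identically zero, and the target interval is closed, we conclude $\text{supp}_r\,f\subset[R_{min}(\mu),R_{max}(\mu)]$.

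I expect the only genuinely non-routine step to be the sign bookkeeping for $E_\ell-E^2$ on $\overline{I}$, namely ruling out spurious roots or extra sign changes of the perturbed effective potential; this is exactly what Proposition \ref{matter::shell} (uniqueness of the three roots on $\overline{I}$) and the $C^1$-control \eqref{uniform::control} provide. The smallness of $\eta$ relative to $r_1-r_0$ enters precisely to keep the cut-off $\Chi_\eta$ from leaking into the classically forbidden window $(r_0,r_1)$, which is what makes $R_{min}(\mu)=r_1(\mu,E_2,\ell_1)$ the correct left endpoint.
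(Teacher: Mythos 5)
Your proof is correct and follows essentially the same route as the paper's: extract $(E,\ell)\in B_{bound}$ from the nonvanishing of $\Phi$, use the cut-off $\Chi_\eta$ together with $r_1-r_0>\eta$ to exclude the inner component $\left]2M+\rho, r_0(\mu,E,\ell)\right]$ of the classically allowed region $\{E_\ell\le E^2\}$, and then conclude via the monotonicity of $r_1(\mu,\cdot)$ and $r_2(\mu,\cdot)$ and the definitions \eqref{Rmin}--\eqref{Rmax}. If anything, you make explicit (via Proposition \ref{matter::shell} and \eqref{uniform::control}) the sign bookkeeping for $E_\ell-E^2$ on $\overline{I}$ that the paper leaves implicit.
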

\begin{proof}
Let $(x^\mu, v^a)\in \Gamma$ and denote by $r$ its radial component. Let $E(x^\mu, v^a)$ and $\ell(x^\mu, v^a)$ be defined by \eqref{energy} and \eqref{ang::momentum}.
If $r\in \text{supp}_r\,f$, then 
\begin{equation*}
\Phi(E, \ell) >0, \quad\text{and}\quad \Psi_\eta(r, E, \ell, \mu) >0. 
\end{equation*}
Therefore, $(E, \ell)\in B_{bound}$ and 
\begin{equation*}
\Psi_\eta(r, E, \ell, \mu) = \Chi_\eta(r - r_1(\mu, (E, \ell))) > 0. 
\end{equation*}
Hence, 
\begin{equation*}
r \geq r_1(\mu, E, \ell) - \eta > r_0(\mu, E, \ell). 
\end{equation*}
The last inequality is due to the definition of $\eta$. Now since $(E, \ell)\in B_{bound}$, the equation $E_\ell(\tilde r) = E^2$ admits three distinct positive roots. Moreover, we have
\begin{equation}
\label{pot::ineq}
E_\ell(r) \leq E^2
\end{equation}
for any geodesic moving in the exterior region. Therefore, 
\begin{equation*}
r\in\left]2M+\rho, r_0(\mu, E, \ell)\right]\cup\left[r_1(\mu, E, \ell), r_2(\mu, E, \ell) \right]. 
\end{equation*} 
Hence, $r$ must lie in the region $\left[r_1(\mu, E, \ell), r_2(\mu, E, \ell) \right]$. By  construction of $R_{min}(\mu)$ and $R_{max}(\mu)$, we conclude that 
\begin{equation*}
r\in \left[R_{min}(\mu), R_{max}(\mu)\right]. 
\end{equation*}
\end{proof}
%In order to justify the choice of \eqref{mu::0} and \eqref{lambda::0}, it suffices to remark, using the above arguments, that there is vacuum in a neighbourhood of the point $2M+\rho$ so that $\mu$ is given by $\mu^{Sch}$ in this region. Moreover, the extension of solutions to $]2M, \infty[$ by Schwarzschild solution is straightforward.   
\subsection{Regularity of the matter terms}
\label{matter::terms}
In this section, we show that the matter terms $G_\phi$ and $H_\Phi$ given by respectively  \eqref{G::Phi} and \eqref{H::Phi} are well defined on $\displaystyle I\times\mathcal U(\delta_0)$. Then, we investigate their regularity with respect to each variable.  
\\ Let $(r, \mu; \delta) \in \displaystyle I\times \mathcal U(\delta_0)$ and let $\varepsilon\in[1,\infty[$ and $\ell\in[0, r^2(\varepsilon^2-1)[$. Note that  if $\displaystyle r^2(\varepsilon^2 - 1) < \ell_1$ , then $\ell<\ell_1$ and
\begin{equation*}
\Phi(e^{\mu(r)}\varepsilon, \ell; \delta) = 0,
\end{equation*}
since $\text{supp}_\ell\Phi\subset[\ell_1, \ell_2]$.  Therefore,
\begin{align*}
\displaystyle G_\Phi(r, \mu; \delta) &=  \frac{2\pi}{r^2}\int_1^\infty\;\int_0^{r^2\left(\varepsilon^2 - 1\right)}\Phi(e^{\mu(r)}\varepsilon, \ell; \delta)\Psi_\eta(r, (e^{\mu(r)}\varepsilon, \ell), \mu)\frac{\varepsilon^2}{\sqrt{\varepsilon^2 - 1 - \frac{\ell}{r^2}}}\;d\ell d\varepsilon   , \\
\displaystyle &=  \frac{2\pi}{r^2}\int_{\sqrt{1 + \frac{\ell_1}{r^2}}}^{e^{-\mu(r)}E_2}\;\int_{\ell_1}^{r^2\left(\varepsilon^2 - 1\right)}\Phi(e^{\mu(r)}\varepsilon, \ell; \delta)\Psi_\eta(r, (e^{\mu(r)}\varepsilon, \ell), \mu)\frac{\varepsilon^2}{\sqrt{\varepsilon^2 - 1 - \frac{\ell}{r^2}}}\;d\ell d\varepsilon.
\end{align*}
We make a first change of variable from  $\ell$ to $\tilde \ell := \ell - \ell_1$,
\begin{equation*}
\displaystyle G_\Phi(r, \mu; \delta) =  \frac{2\pi}{r^2}\int_{\sqrt{1 + \frac{\ell_1}{r^2}}}^{e^{-\mu(r)}E_2}\;\int_{0}^{r^2\left(\varepsilon^2 - 1\right) - \ell_1}\Phi(e^{\mu(r)}\varepsilon, \tilde\ell + \ell_1 ; \delta)\Psi_\eta(r, (e^{\mu(r)}\varepsilon, \tilde\ell+\ell_1), \mu)\displaystyle \frac{\varepsilon^2}{\sqrt{\left(\varepsilon^2 - 1 - \frac{\ell_1}{r^2}\right) - \frac{\tilde\ell}{r^2}}}\;d\tilde\ell d\varepsilon. 
\end{equation*}
We make a second change of variable from $\tilde\ell$ to $\displaystyle s := \frac{\tilde\ell}{r^2(\varepsilon^2 - 1) - \ell_1}$, 
\begin{equation*}
\displaystyle G_\Phi(r, \mu; \delta) =  2\pi\int_{\sqrt{1 + \frac{\ell_1}{r^2}}}^{e^{-\mu(r)}E_2}\;\varepsilon^2\left( \varepsilon^2 - \left(1 +  \frac{\ell_1}{r^2}\right)\right)^{\frac{1}{2}}g_\Phi(r, \mu,e^{\mu(r)} \varepsilon;\delta)d\varepsilon,
\end{equation*}
where $g_\Phi$ is defined by 
\begin{equation}
\label{g:phi}
g_\Phi(r, \mu, E;\delta) := \int_{0}^{1}\Phi(E, sr^2(e^{-2{\mu(r)}}E^2-1) + (1 - s)\ell_1 ; \delta)\Psi_\eta(r, E, sr^2(e^{-2{\mu(r)}}E^2-1) + (1 - s)\ell_1), \mu)\frac{ds}{\sqrt{1-s}}.
\end{equation}
We make a last change of variable from $\displaystyle \varepsilon$ to $E := e^{\mu(r)}\varepsilon$ to obtain 
\begin{equation}
\label{G::Phi::A}
\displaystyle G_\Phi(r, \mu; \delta) = 2\pi e^{-4{\mu(r)}}\int_{e^{\mu(r)}\sqrt{1 + \frac{\ell_1}{r^2}}}^{E_2}\;E^2\left(E^2 - e^{2{\mu(r)}}\left(1 +  \frac{\ell_1}{r^2}\right)\right)^{\frac{1}{2}}g_\Phi(r, \mu, E;\delta)dE. 
\end{equation}
With the same change of variables we compute
\begin{align*}
\displaystyle H_\Phi(r, \mu; \delta) &= \frac{2\pi}{r^2}\int_1^\infty\;\int_0^{r^2\left(\varepsilon^2 - 1\right)}\Phi(e^{{\mu(r)}}\varepsilon, \ell; \delta)\Psi_\eta(r, (e^{\mu(r)}\varepsilon, \ell), \mu)\sqrt{\varepsilon^2 - 1 - \frac{\ell}{r^2}}\;d\ell d\varepsilon \\
&= \frac{2\pi}{r^2}\int_{\sqrt{1 + \frac{\ell_1}{r^2}}}^{e^{-\mu(r)}E_2}\;\int_{\ell_1}^{r^2\left(\varepsilon^2 - 1\right)}\Phi(e^{\mu(r)}\varepsilon, \ell; \delta)\Psi_\eta(r, (e^{\mu(r)}\varepsilon, \ell), \mu)\sqrt{\varepsilon^2 - 1 - \frac{\ell}{r^2}}\;d\ell d\varepsilon  \\
&= \frac{2\pi}{r^2}\int_{\sqrt{1 + \frac{\ell_1}{r^2}}}^{e^{-\mu(r)}E_2}\;\int_{0}^{r^2\left(\varepsilon^2 - 1\right) - \ell_1}\Phi(e^{{\mu(r)}}\varepsilon, \tilde\ell+\ell_1; \delta)\Psi_\eta(r, (e^{\mu(r)}\varepsilon, \tilde\ell+\ell_1), \mu)\sqrt{\left(\varepsilon^2 - 1 - \frac{\ell_1}{r^2}\right) - \frac{\tilde\ell}{r^2}}\;d\tilde\ell d\varepsilon  \\
&= 2\pi\int_{\sqrt{1 + \frac{\ell_1}{r^2}}}^{e^{-\mu(r)}E_2}\;\left( (\varepsilon^2 - \left(1 +  \frac{\ell_1}{r^2}\right)\right)^{\frac{3}{2}}h_\Phi(r, \mu, e^{{\mu(r)}}\varepsilon;\delta)d\varepsilon,
\end{align*}
where $h_\Phi$ is defined by 
\begin{equation}
\label{h:phi}
h_\Phi(r, \mu, E;\delta) := \int_{0}^{1}\Phi(E, sr^2(e^{-2{\mu(r)}}E^2-1) + (1 - s)\ell_1 ; \delta)\Psi_\eta(r, (E, sr^2(e^{-2{\mu(r)}}E^2-1) + (1 - s)\ell_1), \mu)\sqrt{1-s}\,ds.
\end{equation}
Therefore,
\begin{equation}
\label{H::Phi::A}
\displaystyle H_\Phi(r,\mu; \delta) = 2\pi e^{-4\mu(r)}\int_{e^{\mu(r)}\sqrt{1 + \frac{\ell_1}{r^2}}}^{E_2}\;\left( E^2 - e^{2\mu(r)}\left(1 +  \frac{\ell_1}{r^2}\right)\right)^{\frac{3}{2}}h_\Phi(r, \mu, E;\delta)\,dE.
\end{equation}
It is clear that $h_\phi$ is well defined on $\displaystyle I\times C^1\times \mathcal U(\delta_0)$. Since $\displaystyle s\mapsto \frac{1}{\sqrt{1-s}}$ is integrable, $g_\phi$ is well defined on $\displaystyle I\times C^1\times \mathcal U(\delta_0)$. Now we state a regularity result of $G_\Phi$ and $H_\Phi$. 
\begin{Propo}
\label{regularity::G::H}
\begin{enumerate}
\item $g_\Phi$ and $h_\Phi$ defined respectively by \eqref{g:phi} and \eqref{h:phi} are $C^2$ with respect to $r$ and $E$ respectively on $I$, $]E_1, E_2[$ and $C^1$ with respect to $\delta$ on $[0, \delta_0[$. Furthermore, they are continuously Fréchet differentiable with respect to $\mu$ on $\displaystyle B(\mu^{Sch}, \delta_0)$. 
\item Similarly, $G_\Phi$ and $H_\Phi$ are continuously Fréchet differentiable with respect to $\mu$ on $\displaystyle B(\mu^{Sch}, \delta_0)$, $C^2$ with respect to $r$ and $C^1$ with respect to $\delta$.  
\end{enumerate}
\end{Propo}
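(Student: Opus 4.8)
The plan is to reduce the regularity of $G_\Phi,H_\Phi$ to that of the auxiliary functions $g_\Phi,h_\Phi$ of \eqref{g:phi}--\eqref{h:phi}, and the latter in turn to the regularity of the three ingredients of their integrands: (i) the profile $\Phi(\cdot,\cdot;\delta)$, which by $(\Phi_1)$--$(\Phi_3)$ is bounded, $C^2$ in $(E,\ell)$ and $C^1$ in $\delta$; (ii) the cut-off $\Psi_\eta(r,(E,\ell),\mu)=\Chi_\eta\big(r-r_1(\mu,(E,\ell))\big)$, which is $C^\infty$ in $r$ (as $\Chi_\eta\in C^\infty$) and, by Proposition \ref{matter::shell} together with the smooth dependence of \eqref{eq--} on $(E,\ell)$, is $C^2$ in $(E,\ell)$ and continuously Fréchet differentiable in $\mu$ on $B(\mu^{Sch},\delta_0)$; and (iii) the argument $\ell(s;r,\mu,E):=s\,r^2\big(e^{-2\mu(r)}E^2-1\big)+(1-s)\ell_1$, which is affine in $s$ and smooth in $(r,E)$ and in the evaluation $\mu(r)$. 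Because $B_{bound}$ is compact and $\Phi,\Psi_\eta$ are supported there, all these ingredients and their derivatives up to the orders claimed are bounded uniformly in $s\in[0,1]$ and in $(E,\ell)\in B_{bound}$; this uniformity supplies the dominating functions used below.

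\textbf{Step 1 (the functions $g_\Phi,h_\Phi$).} Since the weights $(1-s)^{-1/2}$ and $(1-s)^{1/2}$ are integrable on $[0,1]$ and the $s$-integrands are products of the ingredients (i)--(iii) evaluated at $\ell(s;\cdot)$, differentiation under the integral sign applies: for each of $r$ and $E$ up to order two, and $\delta$ to order one, the corresponding derivative of the integrand is bounded by a constant times the same integrable weight, uniformly on compact subsets of $I\times]E_1,E_2[\times[0,\delta_0[$. This yields that $g_\Phi,h_\Phi$ are $C^2$ in $r$ and in $E$ and $C^1$ in $\delta$ (the $r$-regularity being inherited from that of $r\mapsto\mu(r)$, so genuinely $C^2$ once $\mu$ is), together with explicit integral formulas for the derivatives. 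For Fréchet differentiability in $\mu$, write for small $\tmu\in\mathcal X$
\begin{equation*}
g_\Phi(r,\mu+\tmu,E;\delta)-g_\Phi(r,\mu,E;\delta)=\int_0^1\Big(\mathcal I[\mu+\tmu]-\mathcal I[\mu]\Big)(s)\,\frac{ds}{\sqrt{1-s}},
\end{equation*}
where $\mathcal I[\mu](s)$ denotes the integrand; expand by the chain rule, noting that $\mu$ enters only through the bounded linear functional $\mu\mapsto\mu(r)$ and through $r_1(\mu,\cdot,\cdot)$, which is Fréchet differentiable by Proposition \ref{matter::shell}; the remainder is then $o(\|\tmu\|_{\mathcal X})$ times the integrable weight, and dominated convergence gives both the differential and its continuity in $\mu$.

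\textbf{Step 2 (from $g_\Phi,h_\Phi$ to $G_\Phi,H_\Phi$).} Set $a(r,\mu):=e^{\mu(r)}\sqrt{1+\ell_1/r^2}$, so that by \eqref{G::Phi::A}--\eqref{H::Phi::A}
\begin{equation*}
G_\Phi(r,\mu;\delta)=2\pi e^{-4\mu(r)}\int_{a(r,\mu)}^{E_2}E^2\big(E^2-a(r,\mu)^2\big)^{1/2}g_\Phi(r,\mu,E;\delta)\,dE,
\end{equation*}
and similarly for $H_\Phi$, with exponent $3/2$ and $h_\Phi$ replacing $E^2(\cdots)^{1/2}g_\Phi$. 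The moving lower endpoint and the endpoint degeneracy are removed simultaneously by the substitution $E^2=a(r,\mu)^2+\big(E_2^2-a(r,\mu)^2\big)u$, $u\in[0,1]$: this turns $G_\Phi$ into $\pi e^{-4\mu(r)}\big(E_2^2-a(r,\mu)^2\big)^{3/2}\int_0^1 u^{1/2}\,w(r,\mu,u)\,g_\Phi\big(r,\mu,E(r,\mu,u);\delta\big)\,du$, and $H_\Phi$ into an analogous expression with $\big(E_2^2-a^2\big)^{5/2}$ and $u^{3/2}$, where $E(r,\mu,u)=\big(a^2+(E_2^2-a^2)u\big)^{1/2}$ and $w$ are smooth in $(r,\mu)$ and bounded away from $0$, the weights $u^{1/2},u^{3/2}$ are integrable, and there is no moving endpoint. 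The claimed regularity of $G_\Phi,H_\Phi$ now follows from Step 1 by the same differentiation-under-the-integral and chain-rule argument applied to this representation (one order of differentiation in $r$, $E$ or $\mu$ being absorbed by the weight $u^{p/2}$, by the smoothness of $a$ and $E(r,\mu,u)$, and by composing the scalar prefactors $e^{-4\mu(r)}$ and $(E_2^2-a^2)^{(p+2)/2}$ with Fréchet-differentiable maps). On the set where $a(r,\mu)\ge E_2$ the integration range is empty and $G_\Phi=H_\Phi\equiv 0$; since $(p+2)/2\ge 3/2$, the two regimes glue with the stated regularity.

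\textbf{Main obstacle.} The delicate point is not the $r$-, $E$- or $\delta$-dependence, which is routine once the endpoint-desingularizing substitution of Step 2 is in place, but the \emph{continuous Fréchet differentiability in $\mu$, uniformly up to the degenerating endpoint}: $\mu$ appears at once explicitly --- through $\mu(r)$ in $e^{-4\mu(r)}$, in $a(r,\mu)$, and inside $\ell(s;r,\mu,E)$ --- and implicitly through $r_1(\mu,(E,\ell))$ inside the cut-off, so one must combine the Fréchet differentiability of $\mu\mapsto r_1(\mu,\cdot)$ from Proposition \ref{matter::shell} with dominated convergence against the integrable weights, uniformly in $(E,\ell)\in B_{bound}$. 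The compactness of $B_{bound}$ and the uniform-in-$(E,\ell)$ estimates already proved in Proposition \ref{matter::shell} are precisely what make this uniform passage to the limit legitimate; the remainder is bookkeeping.
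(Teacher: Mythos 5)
Your Step 1 is essentially the paper's own argument for $g_\Phi$ and $h_\Phi$: dominated convergence against the integrable weights $(1-s)^{\pm 1/2}$, together with the chain rule through the two channels $\mu\mapsto\mu(r)$ and $\mu\mapsto r_1(\mu,\cdot)$, the latter being continuously Fréchet differentiable by Proposition \ref{matter::shell}. The paper records one extra observation that you omit: on the effective domain of integration one has either $r\le r_0(\mu,E,\ell)$ or $r\ge r_1(\mu,E,\ell)$, and $r_1-r_0>\eta$, so the argument $r-r_1$ never lies in the transition zone of $\Chi_\eta$; hence the Fréchet derivative of the cut-off factor vanishes identically and only the $\Phi$-factor contributes to $Dg_\Phi$. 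Your version, which keeps the $\Chi_\eta'$ term in the chain rule, is also valid but produces a more complicated derivative formula. For Step 2 the paper simply differentiates \eqref{G::Phi::A}--\eqref{H::Phi::A} under the integral sign, the moving lower endpoint contributing nothing because the integrand vanishes there and the resulting $(E^2-a^2)^{-1/2}$ singularity being integrable; your substitution $E^2=a^2+(E_2^2-a^2)u$ is a genuinely different, and arguably cleaner, device that makes both of these facts manifest at once.

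The one step that does not go through as written is the gluing across the locus $\{a(r,\mu)=E_2\}$, which is attained inside $I$ (at the roots $r_i(\mu,E_2,\ell_1)$, since $(E_2,\ell_1)\in B_{bound}$). The map $x\mapsto x_+^{3/2}$ is $C^1$ but not $C^2$, so the inequality $(p+2)/2\ge 3/2$ by itself only yields $C^1$ matching in $r$, not the claimed $C^2$. To recover $C^2$ you need the additional input that $\Phi(\cdot,\cdot;\delta)$ is $C^2$ and supported in $[E_1,E_2]\times[\ell_1,\ell_2]$, so that $\Phi$ and $\partial_E\Phi$ vanish at $E=E_2$ and consequently $g_\Phi(r,\mu,E;\delta)=o\big((E_2-E)^2\big)$ as $E\to E_2$; since $E(r,\mu,u)\to E_2$ uniformly in $u$ as $a\to E_2$, the whole expression is then $o\big((E_2^2-a^2)^{7/2}\big)$ and the two regimes match to second order. (Separately, your caveat that the $C^2$-in-$r$ statement is limited by the regularity of $\mu\in C^1(\Ibarre)$ through $e^{-4\mu(r)}$ and $a(r,\mu)$ is correct and left implicit in the paper; only the $C^1$ statement is used downstream.)
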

\begin{proof}
The proof is based on the regularity of the ansatz function as well as on the regularity of $r_1$. 
\begin{enumerate}
\item 
\begin{itemize}
\item By  the dominated convergence theorem, regularity of $\Phi$, its compact support and the assumption $\displaystyle (\Phi_2)$, it is easy to see that $g_\Phi$ and $h_\Phi$ are $C^2$ with respect to $r$ and $E$ on their domain and $C^1$ with respect to $\delta$.
\item For the differentiability of $g_\Phi$ and $h_\Phi$ with respect to $\mu$:  let $(r, E, \delta)\in I\times]E_1, E_2[\times [0, \delta[$ and let $\mu\in B(\mu^{Sch}, \delta_0)$, we have
\begin{align*}
&g_\Phi(r, \mu, E;\delta) =  \\
&\int_{0}^{1}\Phi(E, sr^2(e^{-2{\mu(r)}}E^2-1) + (1 - s)\ell_1 ; \delta)\Psi_\eta(r, (E, sr^2(e^{-2{\mu(r)}}E^2-1) + (1 - s)\ell_1), \mu)\frac{ds}{\sqrt{1-s}} = \\
&\int_{0}^{1}\Phi(E, sr^2(e^{-2{\mu(r)}}E^2-1) + (1 - s)\ell_1 ; \delta)\Chi_\eta(r -  r_1(\mu, E, sr^2(e^{-2{\mu(r)}}E^2-1) + (1 - s)\ell_1))\frac{ds}{\sqrt{1-s}}.
\end{align*}
Since $r_1$ is continuously Fréchet differentiable on $\displaystyle B(\mu^{Sch}, \delta_0)$, $\Chi_\eta$ is smooth on $\mathbb R$ and $\Phi(\cdot, \cdot; \delta)$ is $C^2$ on $B_{bound}$, the mappings 
\begin{equation*}
\phi:\mu\mapsto  \Phi(E, sr^2(e^{-2{\mu(r)}}E^2-1) + (1 - s)\ell_1 ; \delta) =:\overline \Phi(s, \mu)
\end{equation*}
and 
\begin{equation*}
\psi: \mu\mapsto \Chi_\eta(r -  r_1(\mu, E, sr^2(e^{-2{\mu(r)}}E^2-1) + (1 - s)\ell_1))
\end{equation*}
are continuously Fréchet differentiable on $B(\mu^{Sch}, \delta_0)$. Their Fréchet derivatives are respectively given by : $\forall \tmu\in \mathcal X$, 
\begin{equation*}
D\phi(\mu)[\tmu] = -2sr^2E^2\tmu(r)\partial_\ell\Phi(E, sr^2(e^{-2{\mu(r)}}E^2-1) + (1 - s)\ell_1 ; \delta)e^{-2\mu(r)}
\end{equation*}
and 
\begin{align*}
D\psi(\mu)[\tmu] &= - \left(D_\mu\overline r_1(s, \mu)[\tmu]  - 2sr^2E^2\tmu(r)\partial_\ell\overline r_1(s, \mu)e^{-2\mu(r)}  \right)\Chi'_\eta(r -  \overline r_1(s, \mu))
\end{align*}
where 
\begin{equation*}
\overline r_1(s, \mu) := r_1(\mu, e^{\mu(r)}\varepsilon, sr^2(e^{-2{\mu(r)}}E^2-1) + (1 - s)\ell_1).
\end{equation*}
Since $\Chi_\eta$ is either $0$ or $1$ on the support of $\Phi(E, \ell; \delta)$, we have 
\begin{equation*}
D\psi(\mu)[\tmu] = 0. 
\end{equation*}
Therefore, for all $\tmu\in\mathcal X$ sufficiently small,   
\begin{align*}
g_\Phi(r, \mu+\tmu; \delta)(r) &= \int_0^1\overline\Phi(s, \mu + \tmu)\psi(\mu + \tmu)(s)\,ds \\
&= \int_0^1\left( \phi(\mu)(s) + D\phi(\mu)[\tmu](s) + O\left(||\tmu||^2_{C^1(\Ibarre)}\right)\right)\left(\psi(\mu)(s) + O\left(||\tmu||^2_{C^1(\Ibarre)}\right) \right)\,ds \\
&= \int_0^1\overline\Phi(s)\psi(\mu)(s) + \psi(\mu)(s)D\phi(\mu)[\tmu](s) + O\left(||\tmu||^2_{C^1(\Ibarre)}\right)\,ds \\
&= \g_\Phi(\mu)(r) + \int_0^{1}\;\psi(\mu)(s)D\phi(\mu)[\tmu](s)\frac{ds}{\sqrt{1-s}} + O\left(||\tmu||^2_{C^1(\Ibarre)}\right)\,ds
\end{align*}
Note that we didn't write the dependence of $\phi, \psi$ and $g_\Phi$ on the remaining variables in order to lighten the expressions. Now we define 
\begin{equation*}
Dg_\Phi(\mu)[\tmu](r) := -\int_0^{1}\;\psi(\mu)(s)D\phi(\mu)[\tmu](s)\frac{ds}{\sqrt{1-s}}.
\end{equation*}
It is clear that $\displaystyle Dg_\Phi(\mu)$ is a linear mapping from $\displaystyle \mathcal X$ to $\displaystyle \mathbb R$. Besides, since $r_1$ is continuously Fréchet differentiable on  $\displaystyle B(\mu^{Sch}, \delta_0)$,  $g_\Phi$ is also continuously  Fréchet differentiable.  
\item In the same way, we obtain regularity for $h_\Phi$. 
\end{itemize}
\item Regularity for $G_\Phi$ and $H_\Phi$ are straightforward.  In particular, their Fréchet derivatives with respect to $\mu$ are respectively given by $\displaystyle \forall\tmu\in\mathcal X$
\begin{equation}
\label{D3G:Phi}
\begin{aligned}
&D_\mu G_\Phi(r, \mu; \delta)[\tmu] =  2\pi e^{-4\mu(r)}\left( -4\tmu(r) \int_{e^{\mu(r)}\sqrt{1 + \frac{\ell_1}{r^2}}}^{E_2}\;E^2\left(E^2 - e^{2\mu(r)}\left(1 +  \frac{\ell_1}{r^2}\right)\right)^{\frac{1}{2}}g_\Phi(r,\mu, E; \delta)dE\right.   \\
&\left. +  \int_{e^{\mu(r)}\sqrt{1 + \frac{\ell_1}{r^2}}}^{E_2}\;-\frac{E^2\tmu(r)e^{2\mu(r)}\left(1 + \frac{\ell_1}{r^2} \right)}{\sqrt{E^2 - e^{2\mu(r)}\left(1 +  \frac{\ell_1}{r^2}\right)}}g_\Phi(r,\mu; \delta) + E^2\left(E^2 - e^{2\mu(r)}\left(1 +  \frac{\ell_1}{r^2}\right)\right)^{\frac{1}{2}}D_\mu g_\Phi(r,\mu, E; \delta)[\tmu]dE \right). 
\end{aligned}
\end{equation}
and 
\begin{equation}
\label{D3H:Phi}
\begin{aligned}
D_\mu H_\Phi(r, \mu; \delta)[\tmu] &=  2\pi e^{-4\mu(r)}\left( -4\tmu(r) \int_{e^{\mu(r)}\sqrt{1 + \frac{\ell_1}{r^2}}}^{E_2}\;E^2\left(E^2 - e^{2\mu(r)}\left(1 +  \frac{\ell_1}{r^2}\right)\right)^{\frac{3}{2}}h_\Phi(r,\mu, E; \delta)dE\right.   \\
&\left. +  \int_{e^{\mu(r)}\sqrt{1 + \frac{\ell_1}{r^2}}}^{E_2}\;-3E^2\tmu(r)e^{2\mu(r)}\left(1 + \frac{\ell_1}{r^2} \right)\sqrt{E^2 - e^{2\mu(r)}\left(1 +  \frac{\ell_1}{r^2}\right)}h_\Phi(r,\mu, E; \delta) \right. \\
&\left. + E^2\left(E^2 - e^{2\mu(r)}\left(1 +  \frac{\ell_1}{r^2}\right)\right)^{\frac{1}{2}}D_\mu h_\Phi(r,\mu, E; \delta)[\tmu]dE \right). 
\end{aligned}
\end{equation}
\end{enumerate}
\end{proof}
\subsection{The condition \eqref{cond::m} is satisfied}
\label{Condition::m::r}
In this section, we show that we can choose $\delta_0$ even smaller so that the condition \eqref{cond::m} is satisfied. More precisely, we state the following lemma
\begin{lemma}
\label{condition::}
There exists $\delta_0\in]0, \tilde\delta_0] $ such that $\displaystyle\forall(\mu; \delta)\in\mathcal U(\delta_0), \; \forall r\in I$
\begin{equation}
2m(\mu; \delta)(r) < r. 
\end{equation}
Moreover, there exists $C>0$ such that $\forall r\in\Ibarre$, 
\begin{equation*}
\left(1 - \frac{2m(\mu; \delta)(r)}{r}\right)^{-1} < C. 
\end{equation*}
\end{lemma}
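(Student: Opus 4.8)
The plan is to show that the matter term $G_\Phi$ is uniformly $O(\delta)$, so that $m(\mu;\delta)(r)$ stays arbitrarily close to its Schwarzschild value $M$; since every $r\in I$ satisfies $r>2M+\rho$, the condition \eqref{cond::m} and the uniform bound on $\big(1-2m(\mu;\delta)(r)/r\big)^{-1}$ both follow once $\delta_0$ is chosen small enough.

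First I would extract a factor of $\delta$ from $\Phi$. By the hypotheses of Theorem~\ref{main::result} (recalled in the set-up), $\Phi(E,\ell;0)=0$ for all $(E,\ell)\in\Abound$, $\Phi$ vanishes off the compact set $B_{bound}$ for every $\delta$, and $\Phi$ is $C^1$ in $\delta$; hence $\partial_\delta\Phi$ is continuous, therefore bounded on $B_{bound}\times[0,\tilde\delta_0/2]$, say by $\tilde C$, and by the mean value theorem
$$|\Phi(E,\ell;\delta)|=|\Phi(E,\ell;\delta)-\Phi(E,\ell;0)|\le \tilde C\,\delta$$
for all $(E,\ell)\in\Abound$ and $\delta\in[0,\tilde\delta_0/2]$. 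This refines the mere bound in $(\Phi_2)$ and is the key quantitative input.

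Next I plug this into the reduced expressions \eqref{G::Phi::A} and \eqref{g:phi}. Since $0\le\Psi_\eta\le 1$ by \eqref{cut:off:bis} and $s\mapsto(1-s)^{-1/2}$ is integrable on $[0,1]$, one gets $|g_\Phi(r,\mu,E;\delta)|\le 2\tilde C\,\delta$ uniformly. For $(\mu;\delta)\in\mathcal U(\delta_0)$ the function $\mu$ stays within $\tilde\delta_0$ of $\mu^{Sch}$ in $C^1(\Ibarre)$, and $\mu^{Sch}$ is bounded on the compact interval $\Ibarre$, so $e^{-4\mu(r)}$ is bounded above by a constant independent of $(\mu,\delta,r)$; moreover in \eqref{G::Phi::A} the factor $E^2\big(E^2-e^{2\mu(r)}(1+\ell_1/r^2)\big)^{1/2}$ is bounded by $E_2^3$ and the $E$-integral runs over a subinterval of $[0,E_2]$. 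Combining, $|G_\Phi(r,\mu;\delta)|\le C_1\delta$ for all $r\in I$ and $(\mu;\delta)\in\mathcal U(\delta_0)$, with $C_1$ independent of $(\mu,\delta,r)$. Then, from \eqref{mass},
$$|m(\mu;\delta)(r)-M|\le 4\pi\int_{2M+\rho}^{r}s^2\,|G_\Phi(s,\mu;\delta)|\,ds\le 4\pi R^2\,(R-2M-\rho)\,C_1\,\delta=:C_*\,\delta,$$
uniformly in $r\in I$ and $(\mu;\delta)\in\mathcal U(\delta_0)$.

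Finally I close the argument by shrinking $\delta_0$ so that $\delta_0\le\min\{\tilde\delta_0/2,\ \rho/(4C_*)\}$. Then for every $r\in I$ (hence $r>2M+\rho$) and every $(\mu;\delta)\in\mathcal U(\delta_0)$,
$$2m(\mu;\delta)(r)\le 2M+2C_*\delta\le 2M+\tfrac{\rho}{2}<2M+\rho<r,$$
which is exactly \eqref{cond::m}, and moreover
$$1-\frac{2m(\mu;\delta)(r)}{r}\ge 1-\frac{2M+2C_*\delta}{2M+\rho}=\frac{\rho-2C_*\delta}{2M+\rho}\ge\frac{\rho/2}{2M+\rho}>0,$$
so that $\big(1-2m(\mu;\delta)(r)/r\big)^{-1}\le C:=\dfrac{2(2M+\rho)}{\rho}$ on $\Ibarre$, as claimed. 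The only real work here is the bookkeeping needed to make every constant uniform in $(\mu,\delta,r)$; this is where the compactness of $B_{bound}$ and of $\Ibarre$, together with $\Phi(\cdot,\cdot;0)\equiv 0$, are used.
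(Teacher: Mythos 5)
Your proof is correct, and it reaches the conclusion by a route that is noticeably more direct than the one in the paper, although the underlying idea (the matter term is $O(\delta)$ because $\Phi(\cdot,\cdot;0)\equiv 0$) is the same. The paper obtains the smallness of $G_\Phi$ by a Fr\'echet--Taylor expansion of the functional $G_\Phi$ about the point $(\mu^{Sch};0)$, which requires the already-established continuous Fr\'echet differentiability of $G_\Phi$ together with the additional vanishing $D_\mu G_\Phi(\cdot,\mu^{Sch};0)=0$ (itself a consequence of $\partial_\ell\Phi(\cdot,\cdot;0)=0$); you instead extract the factor of $\delta$ pointwise from $\Phi$ by the mean value theorem in $\delta$ on the compact set $B_{bound}\times[0,\tilde\delta_0/2]$, which makes no use of the $\mu$-differentiability of the matter functionals at all. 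The second difference is in how the inequality $2m<r$ is closed: the paper splits $I$ at $R_{min}(\mu)$, uses that the matter vanishes below $R_{min}(\mu)$, and invokes $R_{min}(\mu)>4M$ so that $2M\le r/2$ on the matter region; you avoid the case split entirely by using only $r>2M+\rho$, absorbing the $O(\delta)$ correction into the margin $\rho$, which yields the explicit constant $C=2(2M+\rho)/\rho$. Both arguments are sound; yours is more self-contained and gives cleaner, fully explicit constants, while the paper's version reuses machinery (the Fr\'echet derivatives and the geometric fact $R_{min}>4M$) that it needs elsewhere anyway. The only point worth flagging is that your uniform bound $|\partial_\delta\Phi|\le\tilde C$ implicitly reads the hypothesis ``$C^1$ with respect to the third variable'' as joint continuity of $\partial_\delta\Phi$ on $B_{bound}\times[0,\tilde\delta_0/2]$; this is the natural reading and is also what the paper tacitly uses when it bounds $\partial_\delta g_\Phi$ uniformly, so it is not a gap.
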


\begin{proof}
\begin{enumerate}
\item First, we recall
\begin{equation*}
2m(\mu; \delta)(r) := 2M + 8\pi\int_{2M+\rho}^r\, s^2G_\Phi(s, \mu; \delta)\;ds
\end{equation*}
where $G_\Phi(s, \mu; \delta)$ is given by 
\begin{equation*}
\displaystyle G_\Phi(r, \mu; \delta) = 2\pi e^{-4{\mu(r)}}\int_{e^{\mu(r)}\sqrt{1 + \frac{\ell_1}{r^2}}}^{E_2}\;E^2\left(E^2 - e^{2{\mu(r)}}\left(1 +  \frac{\ell_1}{r^2}\right)\right)^{\frac{1}{2}}g_\Phi(r, \mu, E;\delta)dE. 
\end{equation*}
\item If $\displaystyle 2M + \rho < r \leq R_{min}(\mu)$, then 
\begin{equation*}
2m(\mu; \delta)(r) = 2M < r.
\end{equation*}
Hence, the condition \eqref{cond::m} is always satisfied. Besides, $\displaystyle \forall r\in\left]2M+\rho, R_{min}(\mu) \right[$, we have
\begin{equation*}
\left(1 - \frac{2m(\mu; \delta)(r)}{r}\right)^{-1} < 1 + \frac{2M}{\rho}. 
\end{equation*}

\item Now, let $r\geq R_{min}(\mu)$. We claim that  $\exists\, C>0$, independent of $(\mu, \delta)$ such that 
\begin{equation*}
||r\mapsto 8\pi G_\Phi(r, \mu; \delta)||_{C^1(\Ibarre)}\leq C\delta_0. 
\end{equation*}
In fact, for a fixed $r\in I$ we write
\begin{align*}
G_\Phi(r, \mu; \delta) &= G_\Phi(r, \mu^{Sch}; 0) + D_\mu G_\Phi(r, \mu^{Sch}; 0)[\mu - \mu^{Sch}]  + \delta\partial_\delta G_\Phi(r, \mu^{Sch}; 0) + O\left(||\mu - \mu^{Sch}||^2_{C^1(\Ibarre)} + \delta^2\right) \\
&= \delta\partial_\delta G_\Phi(r, \mu^{Sch}; 0) + O(\delta_0),
\end{align*}
where we used 
\begin{equation*}
G_\Phi(r, \mu^{Sch}; 0) = D_\mu G_\Phi(r, \mu^{Sch}; 0)[\mu - \mu^{Sch}] = 0. 
\end{equation*}
Now we have 
\begin{align*}
&\partial_\delta G_\Phi(r, \mu^{Sch}; 0) =  \\
&2\pi\left(1 - \frac{2M}{r}\right)^{-2}\int_{\sqrt{\left(1 - \frac{2M}{r}\right)\left(1 +  \frac{\ell_1}{r^2}\right)}}^{E_2}\,E^2\left(E^2 - \left(1 - \frac{2M}{r}\right)\left(1 +  \frac{\ell_1}{r^2}\right)\right)^{\frac{1}{2}}\partial_\delta g_\Phi(r, \mu^{Sch}, E;0)\,dE. 
\end{align*}
Since $r\in\Ibarre$ and by the the $(\Phi_2)$ assumption and the definition of $\Chi_\eta$, there exists $\displaystyle\, C>0$ independent of $(r, \mu; \delta)$ such that
\begin{equation*}
\forall E \in [E_1, E_2], \forall r \in I\; \; \left|\partial_\delta g_\Phi(r, \mu^{Sch}, E;0)\right| \leq C. 
\end{equation*}
Therefore, 
\begin{align*}
&\left|\partial_\delta G_\Phi(r, \mu^{Sch}; 0) \right| \\
&\leq  2C\delta\pi\left( 1 - \frac{2M}{2M+\rho}\right)^{-2}\int_{\sqrt{\left(1 - \frac{2M}{r}\right)\left(1 +  \frac{\ell_1}{r^2}\right)}}^{E_2}\,E^2\left(E^2 - \left(1 - \frac{2M}{2M + \rho}\right)\left(1 +  \frac{\ell_1}{(2M+\rho)^2}\right)\right)^{\frac{1}{2}}\,dE + O(\delta_0) \\
&\leq C\delta\int_0^{E_2}E^3\,dE + O(\delta_0) \\
&\leq C\delta_0.  
\end{align*}
This yields the result. 
\\We have, by \eqref{Rmin::cond}, $\forall \mu \in B(\mu^{Sch}, \delta_0)$, 
\begin{equation*}
R_{min}(\mu) > 4M. 
\end{equation*}
Therefore,
\begin{align*}
2m(\mu; \delta)(r) &\leq \frac{R_{min}(\mu)}{2} + \frac{C\delta_0}{3}\left( r^3 - R_{min}(\mu)^3 \right)^2 \\
&\leq \frac{R_{min}(\mu)}{2} + \frac{CR^2\delta_0}{3}r \\
&\leq\left(\frac{1}{2} +  \frac{CR^2\delta_0}{3}\right)r. 
\end{align*}
It remains to update $\delta_0$ so that 
\begin{equation*}
\frac{1}{2} +  \frac{CR^2\delta_0}{3} < 1. 
\end{equation*}
We take for example
\begin{equation*}
\delta_0 = \min\left(\delta_0, \frac{3}{4CR^2} \right). 
\end{equation*} 
Therefore,  $\displaystyle \forall r\geq R_{min}(\mu),$ we have
\begin{equation*}
\left(1 - \frac{2m(\mu; \delta)(r)}{r}\right)^{-1} < 4. 
\end{equation*}

\end{enumerate}
This concludes the proof.
\end{proof}

\subsection{Solving for $\mu$}
\label{Solving::mu}

We check in a number of steps that the mapping $G$ defined by \eqref{G::mu} satisfies the conditions for applying the implicit function theorem: 
\begin{enumerate}
\item First, we need to check that $G$ is well defined on $\mathcal U(\delta_0)$. 
\item It is clear that $(\mu^{Sch}; 0)$ is a zero for $G$. 
\item Next, we need to check that $G$ is continuously Fréchet differentiable on $\mathcal U(\delta_0)$. 
\item Finally, we have to show that the partial Fréchet derivative with respect to the first variable $\mu$ at the point $(\mu^{Sch};0)$:
\begin{align*}
\mathcal L : C^1(\overline I)&\to C^1(\overline I) \\
\;\mu &\mapsto DG_{(\mu^{Sch};0)}\left(\mu; 0\right)
\end{align*}  
is invertible.
\end{enumerate}
Provided the above facts hold, we can now apply Theorem \ref{IFT} to the mapping $G:\mathcal U\to \mathcal X$ to obtain:
\begin{theoreme}
\label{res::version3}
There exists $\delta_1, \delta_2\in]0, \delta_0[$ and a unique differentiable solution map 
\begin{equation*}
\mu:[0, \delta_1[\to B\left(\mu^{Sch}, \delta_2 \right)\subset C^1(\overline I), 
\end{equation*}
such that
$\displaystyle S(0) = \mu^{Sch}$ and 
\begin{equation*}
G(\mu(\delta);\delta) = 0, \quad\quad\forall \delta\in [0, \delta_1[. 
\end{equation*} 
\end{theoreme}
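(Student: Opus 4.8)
The plan is to verify the three hypotheses of Theorem \ref{IFT} for the operator $G$ of \eqref{G::mu}, with $\mathcal B_1 = \mathcal B_2 = \mathcal X = C^1(\Ibarre)$, $X = \mathbb R$ (the bifurcation parameter $\delta$; one may extend $\Phi$ smoothly to a neighbourhood of $\delta = 0$ in $\mathbb R$ if a genuine Banach-space domain is wanted), $\mathcal U = \mathcal U(\delta_0)$ and base point $(\mu^{Sch};0)$. Abbreviate
\begin{equation*}
\Theta(s,\tmu;\delta) := \left(1 - \frac{2m(\tmu;\delta)(s)}{s}\right)^{-1}\left(4\pi s\, H_\Phi(s,\tmu;\delta) + \frac{1}{s^2}m(\tmu;\delta)(s)\right),
\end{equation*}
so that $G(\tmu;\delta)(r) = \tmu(r) - \mu_0 - \int_{2M+\rho}^r \Theta(s,\tmu;\delta)\,ds$. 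First I would record that $G$ maps $\mathcal U(\delta_0)$ into $\mathcal X$: by Lemma \ref{condition::} the prefactor $\left(1-2m(\tmu;\delta)(s)/s\right)^{-1}$ stays bounded on $\Ibarre$, and by Proposition \ref{regularity::G::H} the map $s\mapsto\Theta(s,\tmu;\delta)$ is continuous, so $r\mapsto \int_{2M+\rho}^r\Theta(s,\tmu;\delta)\,ds$ lies in $C^1(\Ibarre)$ and hence so does $G(\tmu;\delta)$. That $(\mu^{Sch};0)$ is a zero of $G$ is immediate: since $\Phi(\cdot,\cdot;0)\equiv 0$ the matter terms $G_\Phi(\cdot,\mu;0)$ and $H_\Phi(\cdot,\mu;0)$ vanish for every $\mu$, so $m(\mu^{Sch};0)\equiv M$, $\Theta(s,\mu^{Sch};0) = \frac{M/s^2}{1-2M/s}$, and $G(\mu^{Sch};0)(r) = \mu^{Sch}(r) - \mu_0 - \int_{2M+\rho}^r\frac{M/s^2}{1-2M/s}\,ds = 0$ by \eqref{Sch::met} and \eqref{mu::0}.

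The second step --- continuous Fréchet differentiability of $G$ on $\mathcal U(\delta_0)$ --- is where the bulk of the routine work lies. Proposition \ref{regularity::G::H} furnishes that $G_\Phi$ and $H_\Phi$ are $C^1$ in $\delta$, $C^2$ in $r$, and continuously Fréchet differentiable in $\mu$ on $B(\mu^{Sch},\delta_0)$, and $m(\tmu;\delta)$ inherits these properties through \eqref{mass}. Composing with the smooth scalar map $x\mapsto(1-2x/s)^{-1}$ --- whose argument $2m(\tmu;\delta)(s)/s$ stays in a compact subset of $]-\infty,1[$ uniformly on $\mathcal U(\delta_0)$ by Lemma \ref{condition::} --- then multiplying and integrating in $s$, operations that all preserve continuous Fréchet differentiability and joint continuity of the differential, one obtains that $G$ is continuously Fréchet differentiable with
\begin{equation*}
DG_{(\tmu;\delta)}(h;k)(r) = h(r) - \int_{2M+\rho}^r D_\mu\Theta(s,\tmu;\delta)[h]\,ds - k\int_{2M+\rho}^r \partial_\delta\Theta(s,\tmu;\delta)\,ds.
\end{equation*}

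Finally I would show that the partial derivative $\mathcal L := G^1_{(\mu^{Sch};0)}$ equals the identity map of $\mathcal X$, which is trivially bounded and invertible. This is the point at which the ansatz condition on $\Phi$ at $\delta = 0$ --- in particular $\Phi(\cdot,\cdot;0)\equiv 0$ --- is decisive. Since $\Phi(\cdot,\cdot;0)\equiv 0$, the maps $\mu\mapsto G_\Phi(\cdot,\mu;0)$, $\mu\mapsto H_\Phi(\cdot,\mu;0)$ and $\mu\mapsto m(\mu;0)$ are \emph{constant} on $B(\mu^{Sch},\delta_0)$ (the first two identically zero, the third identically $M$), so their Fréchet derivatives vanish; in particular $D_\mu H_\Phi(\cdot,\mu^{Sch};0) = 0$ and $D_\mu m(\mu^{Sch};0) = 0$ (alternatively one reads this off the explicit formulas \eqref{D3G:Phi}, \eqref{D3H:Phi}, using \eqref{g:phi}, \eqref{h:phi}). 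Differentiating $\Theta$ by the product and quotient rules, $D_\mu\Theta(s,\mu^{Sch};0)$ is a sum of terms each carrying one of the vanishing factors $D_\mu H_\Phi(\cdot,\mu^{Sch};0)$, $D_\mu m(\mu^{Sch};0)$, so $D_\mu\Theta(s,\mu^{Sch};0) = 0$ and $\mathcal L(h) = h$. Theorem \ref{IFT} then provides $\delta_1\in]0,\delta_0[$, a radius $\delta_2\in]0,\delta_0[$, and a map $\mu:[0,\delta_1[\to B(\mu^{Sch},\delta_2)$ with $\mu(0) = \mu^{Sch}$ and $G(\mu(\delta);\delta) = 0$ for all $\delta\in[0,\delta_1[$, unique in that ball; since $G$ is continuously differentiable jointly, $\delta\mapsto\mu(\delta)$ is itself continuously differentiable by the standard Banach-space implicit function theorem.

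The main obstacle is the continuous Fréchet differentiability of the second step: although conceptually routine once Proposition \ref{regularity::G::H} is in hand, it demands carrying the chain rule through the nested changes of variables behind \eqref{G::Phi::A}--\eqref{H::Phi::A}, keeping all denominators uniformly bounded on $\mathcal U(\delta_0)$ via Lemma \ref{condition::}, and checking joint continuity of $(\tmu;\delta)\mapsto DG_{(\tmu;\delta)}$. By contrast the invertibility of $\mathcal L$, normally the crux of such bifurcation arguments, is essentially free here: the ansatz was engineered so that the linearization of the reduced system at the Schwarzschild solution degenerates to the identity.
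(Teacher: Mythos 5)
Your proposal is correct and follows essentially the same route as the paper: verify that $G$ is well defined on $\mathcal U(\delta_0)$, that $(\mu^{Sch};0)$ is a zero, that $G$ is continuously Fréchet differentiable (resting on Proposition \ref{regularity::G::H} and Lemma \ref{condition::}), and that the linearization in $\mu$ at $(\mu^{Sch};0)$ reduces to the identity, before invoking Theorem \ref{IFT}. The only cosmetic difference is that you deduce $D_\mu G_\Phi(\cdot,\mu^{Sch};0)=D_\mu H_\Phi(\cdot,\mu^{Sch};0)=0$ from the fact that $\Phi(\cdot,\cdot;0)\equiv 0$ makes these maps constant in $\mu$, whereas the paper reads the same vanishing off the explicit formulas \eqref{D3G:Phi}--\eqref{D3H:Phi} using $\Phi(\cdot,\cdot;0)=\partial_\ell\Phi(\cdot,\cdot;0)=0$; both arguments are valid.
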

The remainder of this section is devoted to the proofs of steps $1$ to $4$. 
\subsubsection{$G$ is well defined on $\mathcal U$}
Let $\displaystyle (\mu; \delta)\in\mathcal U(\delta_0)$. Recall the definition of $G$
\begin{equation*}
G(\mu; \delta)(r) :=  \mu(r) -  \left( \mu_0 + \int_{2M+\rho}^{r}\frac{1}{1 - \frac{2m(\mu; \delta)(s)}{s}}\left(4\pi s H_{\Phi}(s, \mu; \delta) + \frac{1}{s^2}m(\mu; \delta)(s)\right)\,ds \right). 
\end{equation*}
By  Lemma \ref{condition::}, there exists $C>0$ such that 
\begin{equation}
\label{cond::n}
\forall r\in I,\;\; \left( 1 - \frac{2m(\mu; \delta)(r)}{r}  \right)^{-1} < C. 
\end{equation}
Besides, $\displaystyle \forall r\in I$,
\begin{align*}
\int_{2M+\rho}^{r}\left(4\pi s H_{\Phi}(s, \mu; \delta) + \frac{1}{s^2}m(\mu; \delta)(s)\right)\frac{ds}{1 - \frac{2m(\mu; \delta)(s)}{s}}  &\leq \int_{2M+\rho}^{R_{max}(\mu)}\left(4\pi s H_{\Phi}(s, \mu; \delta) + \frac{1}{s^2}m(\mu; \delta)(s)\right)\frac{ds}{1 - \frac{2m(\mu; \delta)(s)}{s}} .
\end{align*}
The integral  of the right hand side is finite. Therefore, $G(\mu; \delta_0)$ is well defined on $I$. Moreover, by Proposition \ref{regularity::G::H} and lemma \ref{condition::} we have
\begin{itemize}
\item $\displaystyle r\mapsto H_\Phi(r, \mu; \delta)$ is $C^1$ on $I$, 
\item $\displaystyle r\mapsto m(\mu; \delta)(r)$ is $C^1$ on $I$,
\end{itemize}
Hence, $G(\mu; \delta)$ is well defined and it is $C^1$ on $I$. Moreover, it easy to control its $C^1$ norm thanks to \eqref{cond::n} and the compact support of $r\mapsto H_\Phi(r, \mu; \delta)$ and $r\mapsto G_\Phi(r, \mu(r), \mu; \delta)$. 

\subsubsection{$G$ is continuously Fréchet differentiable on $\mathcal U$}
Let $(\mu; \delta)\in\mathcal U(\delta_0)$. First, we show the differentiability with respect to $\mu$. In this context, we drop the dependence on $\delta$ in order to lighten the expressions. Our first claim is that $G$ has the Fréchet derivative with respect to $\mu$ given by $\displaystyle \forall\tmu\in C^1(\Ibarre)\,, \,\forall r\in I,$
\begin{equation}
\label{DG:mu}
\begin{aligned}
\;\; &D^1G(\mu)[\tmu](r) := \tmu(r) - \int_{2M+\rho}^{r}\frac{2}{s}\frac{1}{\displaystyle \left(1-\frac{2m(\mu)(s)}{s}\right)^2}Dm(\mu)[\tilde\mu](s)\left( 4\pi sH_\Phi(s, \mu) + \frac{1}{s^2} m(\mu)(s)\right)  \\
&+ \frac{1}{\displaystyle 1 - \frac{2 m(\mu)(s)}{s}}\left( 4\pi s D_\mu H_\Phi(s, \mu)[\tmu] + \frac{1}{s^2}D m(\mu)[\tilde\mu](s) \right)ds, 
\end{aligned}
\end{equation} 
where $Dm(\mu)$ is the Fréchet derivative of $m$ with respect to $\mu$, given by 
\begin{equation}
\label{Dm::mu}
Dm(\mu)[\tmu](r) = 4\pi\int_{2M+\rho}^{r}\, s^2D_\mu G_\Phi(s, \mu)[\tmu] ds.
\end{equation}
We need to prove
\begin{equation*}
\lim_{||\tilde\mu||_{C^1(\Ibarre)}\to 0}\; \frac{\left|\left|G(\mu+\tilde\mu) - G(\mu) -  D^1G(\mu)[\tmu]\right|\right|_{C^1(\Ibarre)}}{||\tilde\mu||_{C^1(\Ibarre)}} = 0. 
\end{equation*}

We will only prove in details  
\begin{equation*}
\lim_{||\tilde\mu||_{C^1(\Ibarre)}\to 0}\; \frac{\left|\left|m(\mu+\tilde\mu) - m(\mu) -  Dm(\mu)[\tmu]\right|\right|_{C^1(\Ibarre)}}{||\tilde\mu||_{C^1(\Ibarre)}} = 0. 
\end{equation*}
The remaining terms are treated in the same way thanks to the estimate \eqref{cond::n}, the compact support in $r$ of the matter terms as well as their regularity. 
\\ Let $\displaystyle r\in I$. We compute 
\begin{align*}
& m(\mu + \tilde\mu)(r) - m(\mu)(r) -   Dm(\mu)[\tilde\mu](r)  = 4\pi \int_{2M+\rho}^r\;  s^2\left(  G_{\Phi}(s, \mu + \tilde\mu) - G_{\Phi}(s, \mu) - D_\mu G_{\Phi}(s, \mu)[\tilde\mu]  \right)\;ds.
\end{align*}
Since $G_\Phi$ is  continuously Fréchet differentiable with respect to $\mu$, we write in a neighbourhood of the point $\displaystyle \mu$ with a fixed $s$
\begin{align*}
G_{\Phi}(s, \mu + \tilde\mu) &=  G_{\Phi}(s, \tilde\mu)  + D_\mu G_\Phi(s, \mu)[\tmu] + O\left(||\tmu||^2_{C^1(\Ibarre)}\right) .
\end{align*}
Therefore, 
\begin{equation*}
\left| m(\mu + \tmu)(r) - m(\mu)(r) - Dm(\mu)[\tilde\mu](r) \right| \lesssim ||\tmu||^2_{C^1(\Ibarre)},
\end{equation*}
so that 
\begin{equation*}
\lim_{\displaystyle||\tmu||_{C^1(\Ibarre)}\to 0}\; \frac{\displaystyle||m(\mu + \tmu) - m(\mu) -Dm(\mu)[\tmu] ||_\infty}{\displaystyle||\tmu||_{C^1(\Ibarre)}}= 0.
\end{equation*}
It remains to control the $C^1$ norm of $m(\mu + \tmu) - m(\mu) -   Dm(\mu)[\tmu]$. It is clear that the latter is $C^1$ on $I$ and the derivative is given by $\forall r\in I$  :
\begin{align*}
m(\mu + \tmu)'(r) - m(\mu)'(r) -   Dm(\mu)[\tmu]'(r)   &=  4\pi r^2\left(G_{\Phi}(r, \mu + \tmu)  -  G_{\Phi}(r, \mu) - D_\mu G_\Phi(r, \mu)[\tmu]) \right)   \\
&= O\left(||\tmu||^2_{C^1(\Ibarre)}\right), 
\end{align*}
which implies 
\begin{equation*}
\lim_{\displaystyle||\tmu||_{C^1(\Ibarre)}\to 0}\; \frac{\displaystyle||m(\mu + \tmu)' - m(\mu)' -Dm(\mu)[\tmu]' ||_\infty}{\displaystyle||\tmu||_{C^1(\Ibarre)}}= 0.
\end{equation*}
To conclude, we show that the Fréchet derivative is continuous on $\displaystyle B(\mu^{Sch}, \delta_0)$. Let $\displaystyle \mu_1, \mu_2\in B(\mu^{Sch}, \delta_0)$ and let $\tmu\in C^1(\Ibarre) \; ||\tmu||_{C^1(\Ibarre)}\leq 1$, we compute $\displaystyle\forall r\in I$
\begin{align*}
Dm(\mu_1)(\tmu)(r) - Dm(\mu_2)(\tmu)(r) &=  4\pi\int^r_{2M+\rho}\;s^2\left(D_\mu G_\Phi(s, \mu_1)[\tmu]) - D_\mu G_\Phi(s, \mu_2)[\tmu])\right)\, ds.
\end{align*}
We use the regularity results of Proposition \ref{regularity::G::H} and the estimate $||\tmu||_{C^1(\Ibarre)}\leq 1$ to estimate each term by $\displaystyle ||\mu_1 - \mu_2||_{C^1(\Ibarre)}$. Similarly, we control $\displaystyle ||Dm(\mu_1)(\tmu)' - Dm(\mu_2)(\tmu)'||_\infty$. By taking the supremum on $||\tmu||_{C^1(\Ibarre)}$, we obtain 
\begin{equation*}
||Dm(\mu_1) - Dm(\mu_2)||_{\mathcal L(C^1(\Ibarre), C^1(\Ibarre))} \lesssim ||\mu_1 - \mu_2||_{C^1(\Ibarre)}. 
\end{equation*}
Finally, when $\mu_1\to\mu_2 $ in $C^1(\Ibarre)$ we obtain $\displaystyle Dm(\mu_1)\to Dm(\mu_2) $ in $\mathcal L(C^1(\Ibarre), C^1(\Ibarre))$. This proves the continuity. 
\\ 
\\ We check now the differentiability with respect to $\delta$. We claim that
\begin{equation}
\begin{aligned}
D_\delta G(\delta)(r) &= - \int_{2M+\rho}^{r}\frac{2}{s}\frac{1}{\displaystyle \left(1-\frac{2m(\mu)(s)}{s}\right)^2}m'(\delta)(s)\left( 4\pi sH_\Phi(s, \delta) + \frac{1}{s^2} m(\mu)(s)\right)  \\
&+ \frac{1}{\displaystyle 1 - \frac{2 m(\mu)(s)}{s}}\left( 4\pi s\partial_\delta H_\Phi(s; \delta) + \frac{1}{s^2}m'(\delta)(s) \right)ds, 
\end{aligned}
\end{equation}
where 
\begin{equation*}
m'(\delta)(s) = 4\pi\int_{2M+\rho}^r\,s^2\partial_\delta G_\Phi(s, \mu; \delta)\,ds, 
\end{equation*}
and 
\begin{equation*}
\partial_\delta H_\Phi(s; \delta) = \partial_\delta H_\Phi(s, \mu; \delta). 
\end{equation*}
Again, we dropped the dependance on $\mu$ in order to lighten the expressions. By regularity assumptions on $\Phi$ with respect to the third variable, the compact support of the matter terms and by the dominated convergence theorem, G is $C^1$ with respect to the parameter $\delta$. 
\\ 
\\ 
\\ 
We check the third step: we evaluate the derivative of $G$ with respect to $\mu$ at the point $\displaystyle (\mu^{Sch}, 0)$. For this we use Proposition \ref{regularity::G::H}  and the fact that  $\displaystyle \Phi(\cdot, \cdot ; 0) = \partial_\ell\Phi(\cdot, \cdot; 0) =0\; $to obtain 
\begin{equation*}
D_\mu G_\Phi(s, \mu^{Sch} ; 0) =  D_\mu H_\Phi(s, \mu^{Sch}; 0) = 0.
\end{equation*}
Therefore, $\displaystyle D_\mu G(\mu^{Sch}; 0)$ is reduced to the identity, which is invertible. Since all the assumptions are satisfied, we apply Theorem \ref{IFT} to obtain Theorem \ref{res::version3}. 

\subsection{Conclusions}
In this section, we deduce from Theorem \ref{res::version3} the remaining metric component and the expression of the matter terms. More precisely, we set $\displaystyle\forall \delta\in[0, \delta_1[$
\begin{equation}
\label{lambda:delta}
\lambda^\delta(r) := -\frac{1}{2}\log\left(1 - \frac{2m(\mu(\delta); \delta)}{r} \right), \quad\quad\forall r\in I. 
\end{equation}
$\lambda^\delta$ is well defined since 
\begin{equation*}
\forall r\in I, \quad\quad r > 2m(\mu(\delta); \delta). 
\end{equation*}
As for the matter field, we set $\displaystyle\forall (x^\alpha, v^a)\in\mathcal O\times\mathbb R^3$
\begin{equation}
f^\delta(x^\alpha, v^a) := \Phi(E_\delta, \ell; \delta)\Psi_\eta\left(r - r_1(\mu(\delta), E^\delta, \ell)\right), 
\end{equation}
where
\begin{equation*}
E^\delta = e^{2\mu_\delta(r)}\sqrt{1 + (e^{\lambda^\delta(r)}v^r)^2 + (rv^\theta)^2 + (r\sin\theta v^\phi)^2}
\end{equation*}
and $\ell$ is given by \eqref{ang::momentum}.
\\Such $f^\delta$ is a solution to the Vlasov equation \eqref{Liouville::} since $\Phi$ is a solution and $\Psi_\eta$ is constant on the support of $\Phi$.   
Furthermore, we set the {\it {energy density}}  to be
\begin{equation}
\rho_\delta(r) := G_\Phi(r, \mu(\delta);\delta),
\end{equation}
We define the radii of the matter shell to be
\begin{equation}
\label{Rmin:delta}
R_{min}^\delta := R_{min}(\mu(\delta))
\end{equation}
and
\begin{equation}
\label{Rmax:delta}
R_{max}^\delta := R_{max}(\mu(\delta)),
\end{equation}
where $R_{min}$ and $R_{max}$ are defined by \eqref{Rmin} and \eqref{Rmax}.  Finally, we define the {\it{total mass}} by
\begin{equation}
\label{total::mass}
M^\delta := M + 4\pi\int_{R_{min}^\delta }^{R_{max}^\delta}r^2\rho_\delta(r)\,dr. 
\end{equation}
It is easy to see that $\mu(\delta)$ and  $\lambda^\delta$ are actually $C^2$ on $I$. In fact, since $\mu(\delta)$ is $C^1$ on $I$, $\displaystyle m(\mu(\delta); \delta)$ given by \eqref{mass} is $C^2$ on $I$ and $r\mapsto H(r,  \mu(\delta); \delta)$ is $C^1$ on $I$ by Proposition \ref{regularity::G::H}. Moreover, 
\begin{equation*}
G(\mu(\delta); \delta) = 0.
\end{equation*}
That is $\mu(\delta)$ is  implicitly given by 
\begin{equation*}
 \mu(\delta)(r) =  \left( \mu_0 + \int_{2M+\rho}^{r}\frac{1}{1 - \frac{2m(\mu(\delta); \delta)(s)}{s}}\left(4\pi s H_{\Phi}(s,  \mu(\delta); \delta) + \frac{1}{s^2}m(\mu(\delta); \delta)(s)\right)\,ds \right). 
\end{equation*}
From the above equation it is clear that $\mu(\delta)$ is $C^2$ on $I$. Therefore, $\lambda^\delta$ is also $C^2$ on $I$ and $f^\delta$ is also $C^2$ with respect to $r$. 
\\It remains to extend the solution on $]2M, \infty[$. It suffices to extend $\displaystyle (\mu, \lambda)$ by $\displaystyle (\mu^{Sch}, \lambda^{Sch})$ of mass $M$ on the domain $\displaystyle ]2M, 2M+\rho]$, and by $\displaystyle (\mu^{Sch}, \lambda^{Sch})$ of mass $M^\delta$ on the domain $[R, \infty[$. We recall that the matter field vanishes in these regions thanks to its compact support. Thus, $(\mu, \lambda)$ are still $C^2$ on $]2M, \infty[$. This ends the proof of Theorem \ref{main::result}.

\clearpage 
\renewcommand*\appendixpagename{Appendix}
\renewcommand*\appendixtocname{Appendix}
\appendix

\addappheadtotoc
\section{Study of the geodesic motion in the exterior of Scwharzschild spacetime}
\label{Schwarzschild}
We present a detailed study of the geodesic motion in the exterior region of a fixed Schwarzschild spacetime. We will classify the geodesics based on the study of the effective energy potential.  
Such a classification is of course classical and we refer to \cite[Chapter 3]{Chandrasekhar:1985kt} or \cite[Chapter 33]{misner2017gravitation} for more details. 
%\subsection{Geodesic motion in the Schwarzschild spacetime}
In this section, we prove Proposition \ref{Propo1}. 
\begin{itemize}
\item First, note that $\Esch$ is a cubic function in $\displaystyle \frac{1}{r}$. Its derivative is given by 
\begin{equation*}
\forall r> 2M\; , \quad  {\Esch}'(r) = \frac{2}{r^4}\left(Mr^2 -\ell r + 3M\ell\right). 
\end{equation*}
Three cases are possible :
\begin{enumerate}
\item $\displaystyle {\Esch}'$ has two distinct roots $\displaystyle r_{max}^{Sch}<r_{min}^{Sch}$, where $r_{max}^{Sch}$ and $r_{min}^{Sch}$ correspond respectively to the maximiser and the minimiser of $\displaystyle {\Esch}$.  They are given by
\begin{equation*}
%\label{r:max*}
r_{max}^{Sch}(\ell) = \frac{\ell}{2M}\left( 1 - \sqrt{1-\frac{12M^2}{\ell}}\right),
\end{equation*} 
\begin{equation*}
%\label{r:min}
r_{min}^{Sch}(\ell) = \frac{\ell}{2M}\left( 1 + \sqrt{1-\frac{12M^2}{\ell}}\right). 
\end{equation*}
The extremums of ${\Esch}$ are given by 
\begin{equation}
\label{E:Sch:min}
E^{min}(\ell) = \Esch(r_{min}^{Sch}) = \frac{8}{9}+\frac{\ell-12M^2}{9Mr_{min}^{Sch}(\ell)}, 
\end{equation}
\begin{equation}
\label{E:Sch:max}
E^{max}(\ell) = \Esch(r_{max}^{Sch}) = \frac{8}{9}+\frac{\ell-12M^2}{9Mr_{max}^{Sch}(\ell)}.
\end{equation}
In fact, we compute 
\begin{equation*}
1 - \frac{2M}{r_{min}^{Sch}(\ell)} = 1 - \frac{4M^2}{\ell\left(1 - \sqrt{1 - \frac{12M^2}{\ell}} \right)} =  \frac{2}{3} - \frac{1}{3}\sqrt{1 - \frac{12M^2}{\ell}},
\end{equation*}
and
\begin{equation*}
1 + \frac{\ell}{r_{min}^{Sch}(\ell)^2} = 1 + \frac{\ell}{36M^2}\left( 1 + \sqrt{1 - \frac{12M^2}{\ell}}\right)^2.
\end{equation*}
Therefore, 
\begin{align*}
E_\ell^{Sch}(r_{min}^{Sch}(\ell)) &= \left(1 - \frac{2M}{r_{min}^{Sch}(\ell)}\right)\left( 1 + \frac{\ell}{r_{min}^{Sch}(\ell)^2} \right) \\
&= \left(\frac{2}{3} + \frac{\ell}{18M^2} + \frac{\ell}{18M^2}\sqrt{1 - \frac{12M^2}{\ell}}\right)\left(\frac{2}{3} - \frac{1}{3}\sqrt{1 - \frac{12M^2}{\ell}}\right) \\
&=\frac{8}{9}+\frac{\ell-12M^2}{9Mr_{min}^{Sch}(\ell)},
\end{align*}
where the last expression is obtained by straightforward computations. We do the same thing for $E^{max}(\ell)$. 
This case occurs if and only if $\displaystyle \ell>12M^2$. 
\item $\displaystyle {\Esch}'$ has one double root at $r_c = 6M$. The extremum of $\Esch$ is given by 
\begin{equation*}
E_\ell^c = \frac{8}{9}. 
\end{equation*}
This case occur if and only if $\ell = 12M^2$. 
\item $\displaystyle {\Esch}'$ has no real roots. Then, $\displaystyle {\Esch}$ is monotonically increasing from $0$ to $+\infty$. 
This case occurs if and only if $\ell < 12M^2$. 
\end{enumerate}
We refer to Figure \ref{Fig::2} in the introduction for  the shape of the potential energy in the three cases. 
\item Note that by the mass shell condition \eqref{r::pr}, we have 
\begin{equation*}
E^2 \geq E_\ell^{Sch}(r)
\end{equation*}
for any timelike geodesic moving in the exterior region.  In particular, 
\begin{equation*}
E^2 \geq E^{min}(\ell) \geq \frac{8}{9}.  
\end{equation*}
Therefore, we obtain a lower bound on $E$: 
\begin{equation*}
E \geq \sqrt{\frac{8}{9}}. 
\end{equation*}
\item Now, we claim that the trajectory is a circle of radius $r_0^{Sch} > 2M$ if and only if 
\begin{equation}
\label{circ::motion}
\Esch(r_0^{Sch}) = E^2 \quad\text{and}\quad {\Esch}'(r_0^{Sch}) = 0.  
\end{equation}
Indeed, if the motion is circular of radius $r_0^{Sch}$, then $\displaystyle \forall \tau\in\mathbb R\;:\;  r(\tau) = r_0^{Sch}$. Thus,
\begin{equation*}
\forall\tau\in\mathbb R \;,\; w(\tau) = \dot r(\tau) = 0 \quad\text{and}\quad  \dot w(\tau) = 0. 
\end{equation*}
Besides, it is a solution to the system  \eqref{r:motion}-\eqref{pr:motion}. Therefore 
\begin{equation*}
\forall\tau\in\mathbb R \;,\; {\Esch}'(r_0^{Sch}) = {\Esch}'(r(\tau)) = 0.
\end{equation*}
By \eqref{r::pr}, we have 
\begin{equation*}
\forall\tau\in\mathbb R \;,\; w(\tau)^2 + {\Esch}(r(\tau)) = E^2. 
\end{equation*}
In particular, 
\begin{equation*}
{\Esch}(r_0^{Sch}) = E^2. 
\end{equation*}
Now, let us suppose that there exists $r_0^{Sch}>2M$ such that 
\begin{equation*}
\Esch(r_0^{Sch}) = E^2 \quad\text{and}\quad {\Esch}'(r_0^{Sch}) = 0. 
\end{equation*}
By the above assumption, we have $w = 0$ and ${\Esch}'(r_0^{Sch}) = 0$ so that the point $(r_0^{Sch}, 0)$ is a stationary point $\forall \ell\geq 12 M^2$. Furthermore,
\begin{enumerate}
\item $r_0^{Sch}= 6 $, if $\ell = 12M^2$, 
\item $r_0^{Sch}\in\left\{r_{min}^{Sch}(\ell), r_{max}^{Sch}(\ell)\right\}$, if $\ell > 12M^2$. 
\end{enumerate}
The circular orbits are thus characterised by  \eqref{circ::motion}. 
\item We consider now the case $\ell>12M^2$ and the equation 
\begin{equation}
\label{cond::roots}
E^2 = \Esch(r).
\end{equation}
Let $\displaystyle (\ell, E)\in\left]12M^2, \infty\right[\times\left]\frac{8}{9}, \infty\right[$. Four cases may occur:
\begin{enumerate}
\item If $\displaystyle E^2 = E_\ell^{min}$ or  $\displaystyle E^2 = E_\ell^{max}$, then $r_{min}^{Sch}$ or $r_{max}^{Sch}$ satisfy \eqref{circ::motion}, so that they are double roots. Besides, we note that $\ell = \ell_{ub}$ where
\begin{equation}
\label{l:ub}
\ell_{ub}(E) := \frac{12M^2}{1-4\alpha-8\alpha^2-8\alpha\sqrt{\alpha^2+\alpha}}  \quad\quad \alpha := \frac{9}{8}E^2 - 1
\end{equation}
satisfies $E^2 = E_\ell^{min}$. In fact, we solve the equation below for $\ell$
\begin{align*}
\frac{8}{9} + \frac{\ell - 12M^2}{9\frac{\ell}{2}\left( 1 + \sqrt{1-\frac{12M^2}{\ell}}\right)} = E^2.
\end{align*}
We make the following change of variables
\begin{align*}
\alpha := \frac{9}{8}E^2 - 1\quad\text{and}\quad X := \sqrt{\frac{\ell - 12M^2}{\ell}}.
\end{align*}
The equation becomes
\begin{equation*}
\frac{X^2}{4\left(1 + X \right)}= \alpha,
\end{equation*} 
which is easily solvable for $X$. We can then obtain $\ell_{ub}(E)$. Similarly, we obtain $\ell_{lb}$ defined by 
\begin{equation}
\label{l:lb}
\ell_{lb}(E) := \frac{12M^2}{1-4\alpha-8\alpha^2+8\alpha\sqrt{\alpha^2+\alpha}}, 
\end{equation}
which solves the equation   $E^2 = E_\ell^{max}$. 
\item  If $\displaystyle E^2 > E_\ell^{max}$, then two cases occur
\begin{enumerate}
\item $E^2 < 1$. The equation  \eqref{cond::roots} has one simple root $r_2^{Sch}(E, \ell)> r_{min}^{Sch}(\ell)$
\item $E^2 \geq 1$. The equation \eqref{cond::roots} has no positive roots. The trajectories in this case are similar to the trajectories in case 3 (where $\ell <12M^2$). 
\end{enumerate}

\item $\displaystyle E^2 \in \left]E_\ell^{min}, E_\ell^{max}\right[$. Again, two cases occur 
\begin{enumerate}
\item $E^2 < 1$. Then the equation \eqref{cond::roots} admits three simple positive roots $r_i^{Sch}(E, \ell)$
\begin{equation}
\label{simple::roots}
r_0^{Sch}(E, \ell) < r_{max}^{Sch}(\ell) < r_1^{Sch}(E, \ell) < r_{min}^{Sch}(\ell) < r_2^{Sch}(E, \ell). 
\end{equation}
\item $E^2 \geq 1$. The equation \eqref{cond::roots} admits two simple positive roots $r_i^{Sch}(E, \ell)$
\begin{equation}
\label{simple:roots}
r_0^{Sch}(E, \ell) < r_{max}^{Sch}(\ell) < r_1^{Sch}(E, \ell) < r_{min}^{Sch}(\ell) . 
\end{equation}
\end{enumerate}
\end{enumerate}

\item We consider now the case $\ell \leq 12M^2$. Three cases may occur:

\begin{enumerate}
\item If $\displaystyle E^2 = \frac{8}{9}$, then the equation \eqref{cond::roots} has one triple positive root  $\displaystyle r_c = 6M$.  
\item If $\displaystyle \frac{8}{9}<E^2<1$, then the equation \eqref{cond::roots} has one simple positive root $r_1^{Sch}(E, \ell)$. 
\item If $E^2 \geq 1$, then the equation \eqref{cond::roots} no positive roots.  
\end{enumerate}

\item Based on the above cases, we define the following parameters sets 

\begin{equation}
\begin{aligned}
\mathcal  A_{circ} &:= \left\{(\sqrt{\frac{8}{9}}, 12M^2)\right\}\bigcup\left\{ (E, \ell)\in\ELrange\;:\; E^2 < 1,\quad  \ell = \ell_{ub}(E) \right\} \\
&\bigcup\left\{ (E, \ell)\in\ELrange\;:\;  \ell = \ell_{lb}(E) \right\},
\end{aligned}
\end{equation}

\begin{equation}
%\label{A::bound}
\mathcal  A_{bound} := \left\{ (E, \ell)\in\ELrange \;:\; E^2 <1 ,\quad \ell_{lb}(E) <\ell< \ell_{ub}(E) \right\},
\end{equation}

\begin{equation}
\mathcal  A_{unbound} := \left\{ (E, \ell)\in\ELrange \;:\; E^2 \geq 1 ,\quad \ell> \ell_{lb}(E) \right\}, 
\end{equation}

\begin{equation}
\begin{aligned}
\mathcal  A_{abs} &:= \left\{ (E, \ell)\in\ELrange \;:\;  E^2 < 1, \quad \ell< \ell_{lb}(E) \right\}\bigcup  \left\{ (E, \ell)\in\left]\sqrt{\frac{8}{9}}, 1 \right[\times \left]0, 12M^2 \right[ \right\} \\
&\bigcup \left\{ E\in\left] \sqrt{\frac{8}{9}}, 1\right[ \; ,\; \ell = 12M^2\right\}.
\end{aligned}
\end{equation}
\item Now, we determine the nature of orbits (circular, bounded, unbounded, "absorbed by the black hole") in terms of the parameters $(E, \ell)$ as well as the initial position and velocity. Let $\ell\in[0, \infty[$, $\displaystyle \tilde r\in]2M, \infty[$ and $\displaystyle \tilde w\in\mathbb R$. We compute
\begin{equation*}
E = \sqrt{\Esch(\tilde r)  + \tilde w^2}.
\end{equation*}
\begin{enumerate}
\item If $(E, \ell)\in\Abound$, then there exists $r_i^{Sch} := r_i^{Sch}(E, \ell), i\in\left\{0, 1, 2\right\}$ solutions of \eqref{cond::roots} and satisfying \eqref{simple::roots}. Now recall that 
\begin{equation}
\label{cond::motion}
\Esch(r) \leq E^2, \quad\forall r>2M.
\end{equation}
This implies that $\tilde r$ must lie in the region $\displaystyle \left]2M, r_0^{Sch}\right]\cup[r_1^{Sch}, r_2^{Sch}]$. 
Two cases are possible 
\begin{itemize}
\item either the geodesic starts at $r_0^{Sch}$ and reaches the horizon $r= 2M$ in a finite proper time,
\item or the geodesic is trapped between $r_1^{Sch}$ and $r_2^{Sch}$. 
\end{itemize}
\item If $(E, \ell)\in \mathcal A_{unbound}$, then there exists $r_i^{Sch} := r_i^{Sch}(E, \ell), i\in\left\{0, 1, \right\}$ solutions of \eqref{cond::roots} and satisfying \eqref{simple:roots}. By \eqref{cond::motion}, $\tilde r$ must lie in the region $]2M, r_0^{Sch}]\cup[r_1^{Sch}, \infty[$. Therefore, two cases are possible
\begin{itemize}
\item either the geodesic starts at $r_0^{Sch}$ and reaches the horizon $r= 2M$ in a finite proper time,
\item or the geodesic stars from infinity, hits the potential barrier at $r_1^{Sch}$ and comes back to infinity. 
\end{itemize}
\item If $(E, \ell)\in \mathcal A_{abs}$, then the equation \eqref{cond::roots}  has at most one positive root $r_0^{Sch}$. Two cases are possible:
\begin{itemize}
\item The geodesic starts at $r_0^{Sch}$ and reaches the horizon in a finite proper time. 
\item The geodesic starts from infinity and reaches the horizon in a finite time.  
\end{itemize} 
\item If $(E, \ell)\in\mathcal A_{circ}$, then the equation \eqref{cond::roots} admits one triple root if $\displaystyle E = \sqrt{\frac{8}{9}}$ or a double root. In this case, the geodesic is a circle. 
\end{enumerate}
\end{itemize}
\clearpage
\bibliographystyle{plain}
\bibliography{references}

\end{document}